\documentclass[12pt]{amsart}

\usepackage[utf8]{inputenc}
\usepackage{amsmath,amsthm,amssymb}
\usepackage{indentfirst}
\usepackage{url}
\usepackage{tikz-cd}
\usepackage[colorlinks=true]{hyperref}
\usepackage{colonequals}
\usepackage{graphicx}
\usepackage{enumitem}
\usepackage{soul}
\usepackage{stmaryrd}
\usepackage{mathrsfs,mathtools}
\usepackage{tgpagella}
\usepackage{bbm}
\usepackage{subcaption}
\usepackage[alphabetic]{amsrefs}
\usepackage{xcolor}
\usepackage[mathscr]{eucal}

\usepackage[margin=1in]{geometry}

\newtheorem{thm}{Theorem}[section]
\newtheorem{lem}[thm]{Lemma}
\newtheorem{conj}[thm]{Conjecture}
\newtheorem{prop}[thm]{Proposition}

\newtheorem{cor}[thm]{Corollary}
\newtheorem{ques}[thm]{Question}
{
\theoremstyle{definition}
\newtheorem{remx}[thm]{Remark}

\newtheorem{defnx}[thm]{Definition}
\newtheorem{examplex}[thm]{Example}
}
\definecolor{gold}{RGB}{255,215,0}

\newenvironment{defn}
{\pushQED{\qed}\defnx}
{\popQED\enddefnx}
\newenvironment{example}
{\pushQED{\qed}\examplex}
{\popQED\endexamplex}

\newcommand{\ZZ}{\mathbb Z}

\newcommand{\CC}{\mathbb C}

\newcommand{\supp}{\mathrm{supp}}
\newcommand{\Newton}{\mathrm{Newton}}
\newcommand{\wt}{\mathrm{wt}}

\newcommand{\SBD}{\mathcal{SBD}}

\newcommand{\MBPD}{\mathrm{MBPD}}

\newcommand{\GL}{\mathrm{GL}}

\title{Supports of Castelnuovo-Mumford polynomials}\date{}

\author{Elena S.~Hafner}

\address{Elena S.~Hafner, Department of Mathematics, University of Washington, Seattle, WA 98195-4350. \newline\textup{eshafner@uw.edu}
}

\thanks{Elena S.~Hafner received support from NSF Grant DMS-2402150. }

\begin{document}

\begin{abstract}
    The Castelnuovo-Mumford polynomials are the maximal degree components of Grothendieck polynomials.  The support of each Castelnuovo-Mumford polynomial is conjectured to be M-convex, i.e. the set of integer points of a generalized permutahedron (M{\'e}sz{\'a}ros and St.~Dizier, 2020).  This conjecture is known to hold in certain special cases but remains open in general. We define new families of permutations whose Castelnuovo-Mumford polynomials we show to have M-convex support.  Specifically, we investigate which permutations have Castelnuovo-Mumford polynomials whose supports are the set of integer points in a schubitope.
\end{abstract}
\maketitle

\section{Introduction}
Grothendieck polynomials are a family of polynomials, indexed by permutations, which serve as representatives of the K-theoretic classes of the complete flag variety \cite{lascoux1982structure}.  They can be thought of as non-homogeneous analogues of Schubert polynomials.  For a given permutation $w$, the Schubert polynomial $\mathfrak{S}_w$ is the lowest degree homogeneous component of the Grothendieck polynomial $\mathfrak{G}_w$.  Though the combinatorics of Schubert polynomials has been well studied over the years, there has more recently been a surge of study related to the combinatorics of Grothendieck polynomials and especially, their maximal degree homogeneous components \cite{chou2024constructing,chou2025grothendieck,dreyer2024degree,Hafner22BPDs,Bubbling,pan2024top,psw2021,rajchgot2021degrees,rajchgot2023castelnuovo}.  Pechenik, Speyer, and Weigandt gave these maximal degree components the name \textbf{Castelnuovo-Mumford polynomials} in \cite{psw2021} because the Castelnuovo-Mumford regularity of a matrix Schubert variety can be computed as the difference between the highest and lowest degree homogeneous components of the Grothendieck polynomial.

Huh, Matherne, M{\'e}sz{\'a}ros, and St.~Dizier \cite{hmms2022} conjectured that the supports of homogenized Grothendieck polynomials are M-convex, i.e. the support of a homogenized Grothendieck polynomial is precisely the set of integer points of some generalized permutahedron.  In particular, this broadens an earlier conjecture that every homogeneous component of $\mathfrak{G}_w$ has M-convex support \cite{ms2020}.  It is currently known that homogenized Grothendieck polynomials have M-convex supports for certain families of permutations including: vexillary permutations \cite{Bubbling}, fireworks permutations \cite{chou2025newton}, and permutations whose Schubert polynomials have all nonzero coefficients equal to $1$ \cite{ccmm2022}.  This conjecture was also proven for certain subsets of vexillary permutations in \cite{ey2017} and \cite{ms2020}.  In \cite{Bubbling}, Hafner, M{\'e}sz{\'a}ros, Setiabrata, and St.~Dizier proved M-convexity for vexillary Grothendieck polynomials by constructing a set of diagrams, known as streamlined bubbling diagrams, whose weights correspond to the supports of $\mathfrak{G}_w$.  As a consequence, they showed that for any vexillary permutation, the support of the Castelnuovo-Mumford polynomial is the set of integer points of a particular type of generalized permutahedron: a schubitope.  They observe, however, that the convex hull of the support is \textit{not} always a schubitope for Castelnuovo-Mumford polynomials corresponding to non-vexillary permutations.  Inspired by this observation, we investigate the following questions.
\begin{ques}
    \label{q:schubitope}
    For which permutations is the support of the Castelnuovo-Mumford polynomial the set of integer points of a schubitope?
\end{ques}
\begin{ques}
    \label{q:bubbling}
    For which permutations can we construct a set of streamlined bubbling diagrams whose weights precisely compute the support of the Grothendieck polynomial? 
\end{ques}

We prove a simplified version of the strealined bubbling diagram model for vexillary Grothendieck polynomials.  Leveraging this simplified model, we prove the following result which was conjectured by Hafner, M{\'e}sz{\'a}ros, Setiabrata, and St.~Dizier.  Let $\hat{\mathfrak{G}}_w$ denote the Castelnuovo-Mumford polynomial for a permutation $w$, and let $\chi_{D}$ denote the dual character of the flagged Weyl module associated to a diagram $D$.
\begin{thm} \label{thm:TopVexChar}
    For any vexillary permutation $w$, $\hat{\mathfrak{G}}_w$ equals an integer multiple of $\chi_{D_{\text{top}}(w)}$.
\end{thm}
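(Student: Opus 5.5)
We will derive Theorem~\ref{thm:TopVexChar} by induction, comparing two recursions. One recursion governs the Castelnuovo-Mumford polynomials $\hat{\mathfrak{G}}_w$ and comes from divided differences. The other governs dual characters of flagged Weyl modules and is Magyar's isobaric divided difference rule for $\chi_D$. The simplified streamlined bubbling model is used to identify which diagram appears at each step and to show that this diagram is $D_{\text{top}}(w)$.

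\textbf{Base case.} Let $w$ be dominant. Then $\mathfrak{G}_w$ is the single monomial $x^{\lambda(w)}$, so $\hat{\mathfrak{G}}_w=x^{\lambda(w)}$. On the diagram side, $D_{\text{top}}(w)$ should consist of left-justified full rows, i.e.\ the Rothe diagram of $w$ with no bubbling moves available. Then $\chi_{D_{\text{top}}(w)}=x^{\lambda(w)}$ as well, and the constant is $1$.

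\textbf{Inductive step.} For a general vexillary $w$, choose a chain $w_0,w_1=w_0s_{i_1},\dots,w_k=w$ through vexillary permutations. We start at a dominant $w_0$ and at each step the length drops by one: we move from $w_{j-1}$ to $w_{j-1}s_{i_j}$ where $i_j$ is a descent. Such a chain is obtained by repeatedly removing a descent in a controlled order, for instance using the shape and flag of $w$ and moving flags down one at a time. Write $w'=ws_i$ with $\ell(w')=\ell(w)-1$. The inhomogeneous recursion is $\mathfrak{G}_{w'}=\partial_i\big((1-x_{i+1})\mathfrak{G}_w\big)$. Taking top-degree parts gives one of two cases:
\begin{itemize}
\item the degree of $\mathfrak{G}_{w'}$ equals $\deg\mathfrak{G}_w$, and then $\hat{\mathfrak{G}}_{w'}=\pm\partial_i(x_{i+1}\hat{\mathfrak{G}}_w)$;
\item otherwise $\hat{\mathfrak{G}}_{w'}=\partial_i(\hat{\mathfrak{G}}_w)$.
\end{itemize}
We will use the Rajchgot--Robichaux--Weigandt style formula for $\deg\mathfrak{G}_w$ to decide which case occurs, in terms of the rows $i,i+1$ of $D_{\text{top}}(w)$. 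On the character side, Magyar's rule says $\chi_{s_iD}=\pi_i(\chi_D)$ whenever row $i$ of $D$ contains row $i+1$ (as column sets). Also, multiplying by $x_{i}x_{i+1}$ corresponds to adding a full column in rows $i,i+1$. We therefore need to check two things:
\begin{itemize}
\item $D_{\text{top}}(w')$ is obtained from $D_{\text{top}}(w)$ by the corresponding operation: swapping rows $i$ and $i+1$, possibly together with removing or adding a box in row $i+1$;
\item the appropriate containment of rows holds.
\end{itemize}
Since $\partial_i(x_{i+1}f)$ and $\pi_i(f)$ differ only by rewriting symmetric factors, both cases reduce to Magyar's rule. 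The multiplicative constant can change when $\partial_i$ acts on a factor symmetric in $x_i,x_{i+1}$ that contributes a scalar. This is the source of the integer multiple in the statement, and we track it as a product of such scalars.

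\textbf{Role of the bubbling model.} The simplified bubbling model enters when verifying the two diagram conditions above. $D_{\text{top}}(w)$ is defined as the fully bubbled Rothe diagram. For vexillary $w$ its rows are nested appropriately, because the Rothe diagram of a vexillary permutation has nested column sets after the bubbling moves. This nesting is exactly the hypothesis needed to apply Magyar's rule, so the inductive identity becomes $\hat{\mathfrak{G}}_{w'}=c\,\pi_i\chi_{D_{\text{top}}(w)}=c\,\chi_{D_{\text{top}}(w')}$.

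\textbf{Main obstacle.} The hardest step is the combinatorial verification that $D_{\text{top}}(w')$ really is $s_i D_{\text{top}}(w)$, up to the full-column adjustment, at every step of the chain. This requires that the chosen chain stays within vexillary permutations and that bubbling commutes with the row swap. If direct commutation fails, we will instead compare supports. The simplified bubbling model already shows that the support of $\hat{\mathfrak{G}}_w$ is the set of integer points of the schubitope of $D_{\text{top}}(w)$, so the recursion only needs to be checked on a single extreme (leading) monomial, which pins down the constant.
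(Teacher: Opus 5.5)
There is a genuine gap. The step that carries all the content is that $D_{\text{top}}(ws_i)$ arises from $D_{\text{top}}(w)$ by a row swap plus a box adjustment, with row $i$ containing row $i+1$ so that Magyar's rule applies. This is exactly what you call the main obstacle, and it is never proved. Worse, the recursion it is meant to match is wrong as stated.

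Your second case fails. When $x_{i+1}\hat{\mathfrak G}_w$ is symmetric in $x_i,x_{i+1}$, the degree-$(d-1)$ part of $\partial_i((1-x_{i+1})\mathfrak G_w)$ is $\partial_i\hat{\mathfrak G}_w-\partial_i(x_{i+1}\mathfrak G_w^{(d-1)})$. This is not $\partial_i\hat{\mathfrak G}_w$ in general, and it can even vanish. Take the chain $4312\to 4132\to 1432\to 1423$, which is of exactly your type: it is vexillary, starts at a dominant permutation, and drops length by one at each step. At the last step ($i=3$) one has $\hat{\mathfrak G}_{1432}=x_1^2x_2^2x_3$, so your formula predicts $x_1^2x_2^2$. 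In fact $\mathfrak G_{1423}=x_1^2+x_1x_2+x_2^2-x_1^2x_2-x_1x_2^2$, so $\hat{\mathfrak G}_{1423}=-x_1^2x_2-x_1x_2^2$ and the degree drops by two.

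Your first case also does not reduce to Magyar's rule. The operator $\partial_i(x_{i+1}f)$ is not $\pi_i(f)$ up to symmetric factors: for $i=1$, $f=x_1^2$ the two give $x_1x_2$ versus $x_1^2+x_1x_2+x_2^2$. Nor does multiplying a dual character by $x_ix_{i+1}$ correspond to adjoining a column $\{i,i+1\}$. For $i>1$ that column has a non-monomial dual character; only columns $\{1,\dots,k\}$ split off as factors $\omega_k$.

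The fallback is invalid as well. Knowing $\supp(\hat{\mathfrak G}_w)=\supp(\chi_{D_{\text{top}}(w)})$ (Theorem~\ref{thm:DTopGrothSupp}) and matching one coefficient does not give proportionality, since polynomials with equal supports need not be scalar multiples. Finally, vexillarity gives nested \emph{columns}, which says nothing about row $i$ containing row $i+1$.

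The missing idea is that no new recursion is needed. By Lemma~\ref{lem:Vex-to-Las}, $\mathfrak G_w=\mathfrak L_\alpha$ for the Lehmer code $\alpha$ of $w$. Theorem~\ref{thm:SnowDiagramChar} (Yu, extended to non-snowy $\alpha$ via Pan--Yu) already says $\hat{\mathfrak L}_\alpha$ is a scalar multiple of $\chi_{\mathrm{snow}(D_{\text{Sky}}(\alpha))}$. That is where the proportionality you are trying to rebuild by induction comes from.

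What remains is purely combinatorial:
\begin{enumerate}
\item The column permutation sorting $D(w)$ into $D_{\text{Sky}}(\alpha)$ carries $D_{\text{top}}(D(w),A_{\text{new}}(w))$ onto $\mathrm{snow}(D_{\text{Sky}}(\alpha))$. This is because $A_{\text{new}}(w)$ picks one square per row, bottom to top, by the same rule as the dark cloud.
\item Theorem~\ref{thm:SBD-1PerRow} gives $D_{\text{top}}(D(w),A_R(w))=D_{\text{top}}(D(w),A_{\text{new}}(w))$.
\item Remark~\ref{rem:LeftRightDTop} makes $D_{\text{top}}(w)$ a column permutation of that diagram.
\item $\chi_D$ is invariant under permuting columns.
\end{enumerate}
So the simplified bubbling model is used to identify $D_{\text{top}}(w)$ with a snow diagram up to column permutation, not to drive a step-by-step recursion.
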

\noindent Here $D_{\text{top}}(w)$ is a particular diagram which we will define in Section~\ref{sec:BubblingDiagrams}. 

Building on the results for vexillary permutations, we define new families of permutations which we call almost vexillary permutations and dominant fireworks-vexillary chains.  We show that for any almost vexillary permutation or dominant fireworks-vexillary chain, the support of the Castelnuovo-Mumford polynomial is indeed the set of integer points of a schubitope.  Specifically, we prove the following.
\begin{thm}
\label{thm:topdeg-almostvex}
    Let $w$ be any almost vexillary permutation.  Then $\hat{\mathfrak{G}}_w$ is a scalar multiple of $\chi_{{D}}$ for some diagram ${D}$.  In particular, $\text{Supp}(\hat{\mathfrak{G}}_w)$ is M-convex, and $\text{Newton}(\hat{\mathfrak{G}}_w)$ is a schubitope.  
\end{thm}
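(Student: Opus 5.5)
We do not yet have the definition of an almost vexillary permutation, so the plan assumes one natural form: $w$ differs from a vexillary permutation $v$ by a controlled local modification. Concretely, $w$ contains the pattern $2143$ only in a restricted way, so that $w = v\, s_i$ (or $s_i v$) for some vexillary $v$, or the Rothe diagram of $w$ becomes vexillary after removing one row or column.

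The argument has four steps.

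\textbf{Step 1: reduce to a vexillary case.} Write $\hat{\mathfrak{G}}_w$ in terms of Castelnuovo-Mumford polynomials of vexillary permutations. I would try one of two routes.
\begin{enumerate}
\item Use the descent/transition structure. The top-degree component of a Grothendieck polynomial behaves well under divided difference operators. By the results of Pechenik, Speyer, and Weigandt, the leading monomial and degree of $\hat{\mathfrak{G}}_w$ come from the Rajchgot code. An isobaric or ordinary divided difference $\partial_i$ applied to $\hat{\mathfrak{G}}_v$ should give a scalar multiple of $\hat{\mathfrak{G}}_w$ whenever the degree drops by exactly one.
\item Use a factorization. If the almost vexillary structure separates into a dominant or fireworks block and a vexillary block in disjoint variables, then $\hat{\mathfrak{G}}_w = m \cdot \hat{\mathfrak{G}}_{v}$ for a monomial $m$. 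Equivalently, $\hat{\mathfrak{G}}_w$ is a product of top components.
\end{enumerate}

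\textbf{Step 2: apply Theorem~\ref{thm:TopVexChar}.} By that theorem, $\hat{\mathfrak{G}}_v = c\,\chi_{D_{\text{top}}(v)}$. Dual characters behave well under both operations in Step 1.
\begin{itemize}
\item Multiplying by $x^{a}$ corresponds to adding full columns, or cells in specified rows, to the diagram. Then $x^a\chi_{D} = \chi_{D'}$ with $D'$ obtained by adding cells in column $1$, after shifting the other columns.
\item Divided differences $\partial_i\chi_D$ equal $\chi_{s_iD}$ when $D$ satisfies the appropriate row-containment condition between rows $i$ and $i+1$. This is the analogue of the Schubert-type transition for Kraśkiewicz–Pragacz modules.
\end{itemize}
So the plan is to define $D := s_i D_{\text{top}}(v)$ (or the column-augmented diagram) and verify the containment hypothesis. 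This verification uses the explicit shape of $D_{\text{top}}(v)$ from Section~\ref{sec:BubblingDiagrams}.

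\textbf{Step 3: the consequences.} Once $\hat{\mathfrak{G}}_w = c\,\chi_D$ holds with $c\neq 0$, the last two claims of the theorem follow from known facts. Fink, Mészáros and St.~Dizier showed that the support of $\chi_D$ is exactly the set of integer points of the schubitope $\mathcal{S}_D$, which is a generalized permutahedron. Hence $\text{Supp}(\hat{\mathfrak{G}}_w)$ is M-convex and $\text{Newton}(\hat{\mathfrak{G}}_w)=\mathcal{S}_D$.

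\textbf{Step 4: the main obstacle.} The hardest step is the middle one: showing that the operator relating $\hat{\mathfrak{G}}_v$ to $\hat{\mathfrak{G}}_w$ acts on the dual character as $\chi_{D}\mapsto\chi_{D'}$ up to a scalar. Two things must hold.
\begin{itemize}
\item The top-degree part of $\pi_i\mathfrak{G}_v$ must equal $\partial_i\hat{\mathfrak{G}}_v$ and must not vanish.
\item $D_{\text{top}}(v)$ must satisfy the row-inclusion condition in rows $i,i+1$.
\end{itemize}
The first is a degree computation using the Rajchgot code of $v$ and $w$. For the second, I would first compare the rows $i,i+1$ of $D_{\text{top}}(v)$ using the description of $D_{\text{top}}$ via the bubbled streamlined diagram. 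If this fails in general, I would fall back to a direct weight-preserving bijection. Using the simplified bubbling model, this bijection would match top-degree bumpless pipe dreams (or marked bumpless pipe dreams) of $w$ with fillings computing $\chi_D$, up to a uniform multiplicity.
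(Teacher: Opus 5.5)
Your plan has a genuine gap, and it starts with the definition you had to guess. In the paper, $w$ is almost vexillary when $D_{\_}(w)$ is a column permutation of a skyline diagram $D_{\text{Sky}}(\alpha)$. Here $D_{\_}(w)$ is the Rothe diagram with all packed columns (those equal to $\{1,\dots,a\}$) emptied. This is not a ``one step from vexillary'' condition. For instance, $w=421653$ has columns $\{1,2\},\{1\},\{1,4,5\},\emptyset,\{4\},\emptyset$ in $D(w)$. Its non-packed columns are nested, so $w$ is almost vexillary. Yet none of $ws_i$ or $s_iw$ ($1\le i\le 5$) avoids $2143$, and no initial segment of $w$ equals $\{1,\dots,k\}$ or $\{7-k,\dots,6\}$, so neither route in your Step~1 applies.

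Independently, the mechanism of Step~2 fails as written:
\begin{itemize}
\item The identity $\partial_i\chi_D=\chi_{s_iD}$ is impossible for degree reasons. $\chi_{s_iD}$ is nonzero and homogeneous of degree $|D|$, while $\partial_i\chi_D$ is zero or of degree $|D|-1$. Magyar-type statements use the isobaric operator $f\mapsto\partial_i(x_if)$.
\item The operator that actually links top components is a different one. From $\mathfrak G_w=\partial_i((1-x_{i+1})\mathfrak G_{ws_i})$, the top part is $-\partial_i(x_{i+1}\hat{\mathfrak G}_{ws_i})$ when that is nonzero; when it vanishes, lower components mix in. You give no rule turning $\partial_i(x_{i+1}\chi_D)$ into a multiple of a dual character. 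Proving a statement of that kind is the hard content of the Pan--Yu and Yu results quoted as Theorems~\ref{thm:SnowDiagramLeading} and~\ref{thm:SnowDiagramChar}.
\item $x^a\chi_D=\chi_{D'}$ is available when $x^a$ is a product of the $\omega_i=x_1\cdots x_i$ (adjoin columns $\{1,\dots,i\}$), but not for general monomials. Already the dual character of a single cell in row $r$ is $x_1+\cdots+x_r$, not $x_r$.
\end{itemize}
The fallback bijection is only an intention.

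The missing idea is to bypass Theorem~\ref{thm:TopVexChar} entirely and use orthodontia and Lascoux polynomials. By Theorem~\ref{thm:GrothOrthodontia} and Definition~\ref{def:OrthodontiaPolynomial}, $\mathfrak G_w=\mathscr G_{D(w)}=\omega_1^{k_1}\cdots\omega_n^{k_n}\mathscr G_{D_{\_}(w)}$, where $k_a$ counts the packed columns with $a$ squares. Because $D_{\_}(w)$ is a column permutation of $D_{\text{Sky}}(\alpha)$, Corollary~\ref{cor:ortho_permute} and Theorem~\ref{thm:LascOrthodontia} give $\mathscr G_{D_{\_}(w)}=\mathfrak L_\alpha$. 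Theorem~\ref{thm:SnowDiagramChar} then gives $\hat{\mathfrak L}_\alpha=c\,\chi_{\text{snow}(D_{\text{Sky}}(\alpha))}$. Multiplying by the monomial $\omega_1^{k_1}\cdots\omega_n^{k_n}$ commutes with taking top degree and amounts to appending $k_a$ columns $\{1,\dots,a\}$ to the diagram. So $D$ is $\text{snow}(D_{\text{Sky}}(\alpha))$ with these packed columns adjoined. Your Step~3, deducing M-convexity and the schubitope statement from $\hat{\mathfrak G}_w=c\,\chi_D$, is fine and matches how the paper concludes.
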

\begin{thm}
    \label{thm:chains}
    If $w\in S_n$ is a dominant fireworks-vexillary chain, then $\supp(\mathfrak G_w)$ is precisely the set $\{\wt(\mathcal D)\colon \mathcal D \in \SBD(D(w), \emptyset,A)\}$ of weights of streamlined bubbling diagrams for some subset $A\subset D(w)$.  In particular, the homogenized Grothendieck polynomial $\widetilde{\mathfrak{G}}_w$ has M-convex support, and $\text{Newton}(\hat{\mathfrak{G}}_w)$ is a schubitope.
\end{thm}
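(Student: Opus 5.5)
We need to plan a proof of Theorem~\ref{thm:chains}, with little definition available. Definition guess: a dominant fireworks-vexillary chain is presumably built by gluing dominant, fireworks, and vexillary pieces in a chain. The natural tool is a factorization of $\mathfrak G_w$ along the chain, together with the known descriptions of the supports of each piece.

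The plan is to reduce Theorem~\ref{thm:chains} to a product decomposition plus the known bubbling-diagram models for the pieces.

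First, I will use the structure of a dominant fireworks-vexillary chain to write $\mathfrak G_w$, up to the standard dominant factor, as a product of Grothendieck polynomials of smaller permutations. I expect the Rothe diagram $D(w)$ to decompose into a dominant part $D_0$ together with blocks $D_1,\dots,D_k$ occupying disjoint sets of rows. Here each $D_i$ is the diagram of a vexillary or fireworks permutation after removing the dominant region. The dominant part contributes a single monomial $x^{\lambda}$. For a diagram whose row sets split, the Grothendieck polynomial should factor as $x^{\lambda}\prod_i \mathfrak G_{w_i}$ in disjoint variable sets; this can be checked via pipe dreams or divided differences. Consequently $\supp(\mathfrak G_w)=\lambda+\sum_i \supp(\mathfrak G_{w_i})$ is a Minkowski sum.

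Second, I will invoke the streamlined bubbling diagram model for each piece. For vexillary pieces I will use the simplified model proved in Section~\ref{sec:BubblingDiagrams}, and for fireworks pieces the known description from \cite{chou2025newton}. In each case I must produce a subset $A_i\subset D_i$ such that $\supp(\mathfrak G_{w_i})=\{\wt(\mathcal D):\mathcal D\in\SBD(D_i,\emptyset,A_i)\}$. For a fireworks permutation, $D_i$ after straightening should be a diagram for which the known support equals the integer points of a Minkowski sum of schubitope-like polytopes. I will need a short lemma identifying this with bubbling diagrams in which the boxes of $A_i$ may bubble freely.

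Third, I will set $A=\bigcup A_i$ and show that streamlined bubbling diagrams on $D(w)$ decompose as tuples of streamlined bubbling diagrams on the blocks. Concretely, I will show that bubbling moves never interact across blocks and that dominant boxes never move, so $\wt$ is additive. This gives the displayed equality. M-convexity of $\widetilde{\mathfrak G}_w$ then follows from the general result that weight sets of $\SBD$ are M-convex in homogenized form (as in \cite{Bubbling}). The statement that $\text{Newton}(\hat{\mathfrak G}_w)$ is a schubitope follows by taking maximal-degree bubbling diagrams, which form the schubitope of a single diagram $D_{\text{top}}$.

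The main obstacle is the non-interaction claim in the third step. Bubbling moves can push boxes upward into rows shared with an earlier block, so the blocks may not be fully row-disjoint after moves. To handle this, I would first try to order the chain so that each block's reachable region lies weakly below the previous block's rows and weakly right of the dominant staircase. I would then argue that a box entering another block's rows lands in a position already forced by the dominant part, so that the weight set is unchanged. If this fails, the fallback is to prove the support equality directly: verify that the Minkowski sum of the block supports equals the SBD weight set, using M-convexity of each summand and the fact that Minkowski sums of M-convex sets are M-convex.
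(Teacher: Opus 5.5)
\textbf{There is a genuine gap in your first step.} It comes from guessing the definition. A dominant fireworks-vexillary chain is cut into consecutive blocks $w(i_j)\cdots w(i_{j+1}-1)$. Each block is a vexillary or fireworks permutation of an interval of values, and ``dominant'' means the sequence of block minima is $132$-avoiding. It does not refer to a dominant region $D_0$ of $D(w)$.

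With the actual definition, your claim that $\mathfrak G_w=x^\lambda\prod_i\mathfrak G_{w_i}$ in disjoint variable sets is false as soon as two blocks are arranged increasingly. Take $w=2143$ (blocks $21,43$, minima $13$). Then $D(w)=\{(1,1),(3,3)\}$ splits into row-disjoint pieces, yet
\[
\mathfrak G_{2143}=x_1\,\mathfrak G_{1243}=x_1\bigl(1-(1-x_1)(1-x_2)(1-x_3)\bigr)
\]
has seven monomials rather than being $x_1x_3$.

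The correct product for a direct sum is $\mathfrak G_{u\oplus\tau}=\mathfrak G_u\,\mathfrak G_{1^k\times\tau}$ for $u\in S_k$, where $1^k\times\tau=12\cdots k\,(\tau(1)+k)(\tau(2)+k)\cdots$. The second factor genuinely involves $x_1,\dots,x_k$; it is symmetric in $x_1,\dots,x_{k+1}$ (Lemma~\ref{lem:symmetric-1k} and Theorem~\ref{thm:layering-Groth}). Only skew sums give a disjoint-variable product, with monomial factor $\prod_{i\le k}x_i^n$ (Theorem~\ref{thm:layering-high-to-low}).

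The missing idea is how $132$-avoidance enters. Write the block pattern as $\sigma=(\alpha\oplus1)\ominus\beta$ with $\alpha,\beta$ again $132$-avoiding. Then $w$ is assembled from skew sums of smaller chains and from direct sums $u\oplus\tau$ whose right summand is a \emph{single} block. For those you need a bubbling model for $1^k\times\tau$, not for $\tau$. It exists because $1^k\times\tau$ is again vexillary (resp.\ fireworks), so Theorem~\ref{thm:supp-Gw-SBD} (resp.\ Theorem~\ref{thm:fireMconv}) applies. Theorem~\ref{thm:fireMconv} is already the fireworks bubbling model, so your extra lemma is unnecessary. You also need that the support of each product is the Minkowski sum of supports. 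This holds because every homogeneous component of a Grothendieck polynomial has constant sign $(-1)^{\deg-\ell}$, so nothing cancels.

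\textbf{Your ``main obstacle'' is misdiagnosed, and the proposed fix would prove something false.} Bubbling and K-bubbling moves are column-local. The two summands of a direct sum occupy disjoint column sets. In a skew sum, the full rectangle $[k]\times[n]$ above the lower piece is immovable and only stops that piece's boxes at row $k+1$. So $\SBD(D(w),\emptyset,\bigcup_i A_i)$ is automatically a product of the pieces' bubbling sets with additive weights, and no ordering argument is needed.

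Boxes of a lower block do rise into rows of earlier blocks and change the weight: in $2143$ the box $(3,3)$ reaches rows $2$ and $1$. This is exactly what $\supp(\mathfrak G_{1^k\times\tau})$ requires. Showing that such moves leave the weight set unchanged is therefore impossible. Your fallback via M-convexity of the summands cannot prove equality of two sets either.

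Likewise, the schubitope claim should come from the factorizations. Newton polytopes of products are Minkowski sums, and Minkowski sums of schubitopes are schubitopes of column-concatenated diagrams; combine this with Theorems~\ref{thm:DTopGrothSupp} and~\ref{thm:firetop}. As written, you instead rely on an unproved identification of the maximal-excess bubbling weights with $\{\wt(C):C\le D_{\text{top}}\}$ for your choice of $A$.
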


\textbf{Outline of the paper.} In Section~\ref{sec:BG}, we review the relevant definitions and background results.  In Section~\ref{sec:BubblingDiagrams}, we provide an overview of the bubbling diagram model for supports of vexillary Grothendieck polynomials.  We prove a simplified version of these bubbling diagrams in Section~\ref{sec:NewvexBubb} and use it to prove Theorem~\ref{thm:TopVexChar}.  In Section~\ref{sec:extendedBubb}, we define almost vexillary and dominant fireworks-vexillary chain permutations and prove Theorems~\ref{thm:topdeg-almostvex} and~\ref{thm:chains}.  More generally, we provide methods by which permutations whose Castelnuovo-Mumford polynomials have schubitope support can be combined into additional permutations with this property.

\section{Background}
\label{sec:BG}
\subsection{Permutations}
We will be concerned with Grothendieck polynomials corresponding to particular families of permutations.  A \textbf{dominant permutation} is one that is $132$-avoiding, and a \textbf{vexillary permutation} is one that is $2143$-avoiding.  A permutation is called \textbf{fireworks} if the initial terms of the decreasing runs in its one-line notation form an increasing sequence.

\subsection{Grothendieck and Lascoux Polynomials}
For any $i$, the \textbf{divided difference operator} $\partial_i$ is defined by $\partial_i(f)=\frac{f-s_if}{x_i-x_{i+1}}$ where $f$ is a polynomial in $x_1,\ldots,x_{n+1}$ and $s_i$ acts on $f$ by swapping $x_i$ and $x_{i+1}$.  Then for any permutation $w \in S_n$, the \textbf{Grothendieck polynomial} $\mathfrak{G}_w$ is defined by
\[\mathfrak{G}_w=\begin{cases}
    x_1^{n-1} x_2^{n-2} \cdots x_{n-1}, & \text{if } w=n \text{  } n-1 \text{  } n-2 \text{  } \ldots \text{  } 1 \\
    \partial_i((1- x_{i+1})\mathfrak{G}_{w s_i}), & \text{if } w(i)<w({i+1})
\end{cases}.\]
Let $\mathfrak{G}_w^{(k)}$ denote the degree $k$ homogeneous component of $\mathfrak{G}_w$, and let $\hat{\mathfrak{G}}_w$ denote the highest degree component of $\mathfrak{G}_w$.  Pechenik, Speyer, and Weigandt \cite{psw2021} refer to $\hat{\mathfrak{G}}_w$ as the \textbf{Castelnuovo-Mumford polynomial}.  The lowest degree component of $\mathfrak{G}_w$ is the Schubert polynomial $\mathfrak{S}_w$.  We denote by $\widetilde{\mathfrak G}_w$ the \textbf{homogenized Grothendieck polynomial}
\[
\widetilde{\mathfrak G}_w(x_1, \dots, x_n, z) := \sum_{k=\deg(\mathfrak{S}_w)}^{\deg(\mathfrak G_w)}(-1)^{k-\deg(\mathfrak{S}_w)}\mathfrak G_w^{(k)}(x_1, \dots, x_n)z^{\deg(\mathfrak G_w) - k}.
\]

Similarly, for any weak composition $\alpha$, the \textbf{Lascoux polynomial} $\mathfrak{L}_{\alpha}$ is given by
\[\mathfrak{L}_{\alpha}=\begin{cases}
    x^{\alpha}, & \text{if } \alpha \text{ is weakly decreasing}\\
    \partial_i(x_i((1- x_{i+1})\mathfrak{L}_{s_i \alpha})), & \text{if } \alpha_i < \alpha_{i+1}
\end{cases}.\]
Let $\hat{\mathfrak{L}}_{\alpha}$ denote the highest degree component of $\mathfrak{L}_{\alpha}$.  We will refer to $\hat{\mathfrak{L}}_{\alpha}$ as the \textbf{top-Lascoux} polynomial.

\subsection{Supports of Polynomials and M-Convexity}
\label{sec:MConvDefs}
Given a polynomial $f$ in $x_1,\ldots,x_n$, the \textbf{support} is the set $\supp (f)$ of all $a=(a_1,\ldots,a_n)$ such that $x^a$ has nonzero coefficient in $f$.
\begin{defnx}
\label{defn:NewtonPolytope}
The \textbf{Newton Polytope} of a polynomial $f$ in variables $x_1,\ldots,x_n$ is the convex hull of the support of $f$, denoted $\Newton (f)$.  A polynomial $f$ is said to have \textbf{saturated Newton polytope (SNP)} if $\supp (f)$ is precisely the set of integer points of $\Newton (f)$.
\end{defnx}

A set $S\subset \mathbb{Z}^n$ is defined to be \textbf{M-convex} if for any pair of elements $x,y \in S$ and any $i\in[n]$ for which $x_i > y_i$, there exists an index $j\in[n]$ such that $x_j < y_j$ and $x - e_i + e_j, y - e_j + e_i \in S$.  The convex hull of any M-convex set is a generalized permutahedron, and the set of integer points in any generalized permutahedron is M-convex \cite{Murota}.  In particular, if a polynomial has M-convex support, then it has SNP.

\subsection{Diagrams}
A \textbf{diagram} is a subset $D$ of the $n\times m$ grid.  Given a diagram $D$, let $D_i$ denote the $i^{th}$ column of $D$.  The \textbf{weight} of $D$ is the vector $\wt({D})=(\wt({D})_1, \ldots, \wt({D})_n)$ where $\wt(D)_i$ is the number of squares in row $i$ of $D$.

We say that two sets $R,S \subset [n]$ satisfy $R \leq S$ if $|R| = |S|$ and for every $k$, the $k$-th smallest element of $R$ is less than or equal to the $k$-th smallest element of $S$.  For two diagrams $C,D \subset [n] \times [m]$, we say $C\leq D$ if $C_j \leq D_j$ for every $j \in [m]$.

For any $w \in S_n$, the \textbf{Rothe diagram} $D(w)$ is the subset
\[D(w):= \{(i,j)\in[n]\times[n]\colon i < w^{-1}(j) \textup{ and } j < w(i)\}\]
of the $n \times n$ grid.  Pictorially, the Rothe diagram can be thought of in the following way:
\begin{enumerate}
    \item Beginning with the $n \times n$ grid, place a dot in position $(i,w(i))$ for every $i \in [n]$.
    \item Starting from each of these dots, draw a horizontal line to the right edge of the grid and a vertical line to the bottom edge of the grid.  The squares which remain empty after this process are the Rothe diagram.
\end{enumerate}

\begin{figure}
    \centering
    \begin{tikzpicture}
    \filldraw[draw=darkgray, color=green, opacity=0.5] (.5,3) rectangle (3.5,3.5);
    \filldraw[draw=darkgray, color=green, opacity=0.5] (1,2) rectangle (3,2.5);
    \filldraw[draw=darkgray, color=green, opacity=0.5] (1.5,.5) rectangle (2,1.5);
    \draw[step=.5cm,gray,very thin] (0,0) grid (4,4);
    \node at (.75,3.25) {1};
    \node at (1.25,3.25) {1};
    \node at (1.75,3.25) {1};
    \node at (2.25,3.25) {1};
    \node at (2.75,3.25) {1};
    \node at (3.25,3.25) {1};
    \node at (1.25,2.25) {2};
    \node at (1.75,2.25) {2};
    \node at (2.25,2.25) {2};
    \node at (2.75,2.25) {2};
    \node at (1.75,1.25) {3};
    \node at (1.75,.75) {3};
    \draw[black,thick] (.25,0) -- (.25,3.5) .. controls (.25,3.75) .. (.5,3.75) -- (4,3.75);
    \draw[black,thick] (.75,0) -- (.75, 2.5) .. controls (.75,2.75) .. (1,2.75) -- (4,2.75);
    \draw[black,thick] (1.25,0) -- (1.25,1.5) .. controls (1.25,1.75) .. (1.5,1.75) -- (4,1.75);
    \draw[black,thick] (1.75,0) .. controls (1.75,.25) .. (2,.25) -- (4,.25);
    \draw[black,thick] (2.25,0) -- (2.25,1) .. controls (2.25,1.25) .. (2.5,1.25) -- (4,1.25);
    \draw[black,thick] (2.75,0) -- (2.75,.5) .. controls (2.75,.75) .. (3,.75) -- (4,.75);
    \draw[black,thick] (3.25,0) -- (3.25,2) .. controls (3.25,2.25) .. (3.5,2.25) -- (4,2.25);
    \draw[black,thick] (3.75,0) -- (3.75,3) .. controls (3.75,3.25) .. (4,3.25);
    \end{tikzpicture}
    \caption{Rothe diagram for $w= 18273564$.  Numbers in each cell indicate the value of the rank function $r_{D(w)}(i,j)$.}
    \label{fig:Rothe-example}
\end{figure}
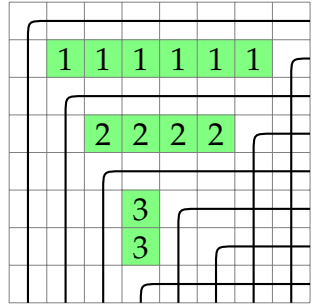

See Figure \ref{fig:Rothe-example} for an example.  Rothe diagrams are equipped with a rank function $r_{D(w)}\colon D(w)\to\ZZ_{\geq 0}$ defined by
\[r_{D(w)}(i,j) \colonequals |\{(k,w(k))\colon k < i \textup{ and } w(k) < j\}|.\]
One can think of this rank function as counting the number of hooks which pass northwest of the square $(i,j)$ in the pictorial construction above.

For any weak composition $\alpha=(\alpha_1,\ldots ,\alpha_n)$, there is an associated \textbf{skyline diagram} $D_{Sky}(\alpha)$ defined by 
\[D_{Sky}(\alpha):=\{(i,j)\colon 1\leq j\leq\alpha_i\}.\]
In other words, $D_{Sky}(\alpha)$ is the left-justified diagram containing $\alpha_i$ squares in each row $i$.  See Figure~\ref{fig:skyline} for an example.

\begin{figure}
    \centering
    \begin{tikzpicture}
    \filldraw[draw=darkgray, color=green, opacity=0.5] (0,3) rectangle (3,3.5);
    \filldraw[draw=darkgray, color=green, opacity=0.5] (0,2) rectangle (2,2.5);
    \filldraw[draw=darkgray, color=green, opacity=0.5] (0,.5) rectangle (.5,1.5);
    \draw[step=.5cm,gray,very thin] (0,0) grid (4,4);
    \end{tikzpicture}
    \caption{Skyline diagram for $\alpha=(0,6,0,4,0,1,1,0)$}
    \label{fig:skyline}
\end{figure}
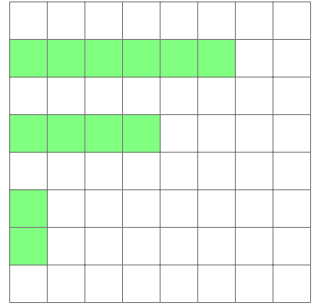

A diagram is said to be \textbf{\%-avoiding} if for any $i<i',j'<j$ such that $(i,j),(i',j')\in D$, either $(i,j')\in D$ or $(i',j) \in D$.  Note that Rothe diagrams and skyline diagrams are always \%-avoiding.





\subsection{Flagged Weyl modules, Dual Characters, and Schubitopes}
\label{sec:DualChar}
For a matrix $M \in M_n(\CC)$ and $R,S\subseteq[n]$, let $M_R^S$ denote the submatrix of $M$ obtained by restricting to rows $S$ and columns $R$.  Let $\mathfrak b$ denote the set of upper triangular matrices in $M_n(\CC)$, and let $B$ denote the set of invertible upper triangular matrices in $\GL_n(\CC)$.  

Let $Y$ be an upper triangular matrix with indeterminates $y_{ij}$ in the positions $i \leq j$ and zeroes elsewhere.  Let $\CC[\mathfrak b]$ denote the polynomial ring in the variables $\{y_{ij}\colon i \leq j\}$. The group $B$ acts on $\CC[\mathfrak b]$ on the right via $f(Y)\cdot b\colonequals f(b^{-1}Y)$.

\begin{defnx}
    Fix a diagram $D\subseteq[n]\times[m]$.  The \textbf{flagged Weyl module} of $D$ is 
\[
\mathcal M_D :=\mathrm{Span}_\CC\left\{\prod_{j=1}^m\det\left(Y_{D_j}^{C_j}\right)\colon C\leq D\right\}.
\]
\end{defnx}

The \textbf{character} of any $B$-module $N$ is given by 
\[\mathrm{char}_N(\mathrm{diag}(x_1, \dots, x_n)) = \mathrm{tr}(\mathrm{diag}(x_1,\dots, x_n)\colon N \to N)\]
where $\mathrm{diag}(x_1, \dots, x_n)$ is the diagonal matrix with entries $x_1, \dots, x_n$, viewed as a linear map from $N$ to $N$ via the $B$-action.  The \textbf{dual character} of $N$ is 
\[\mathrm{char}_N^*(\mathrm{diag}(x_1, \dots, x_n)) = \mathrm{tr}(\mathrm{diag}(x_1^{-1},\dots, x_n^{-1})\colon N \to N).\]

We will denote the dual character of $\mathcal M_D$ by $\chi_D\colonequals\mathrm{char}_{\mathcal M_D}^*$.

\begin{prop}[{\cite{fink2018schubert}*{Theorem 7}}]
\label{prop:support-of-chi}
The dual character $\chi_D$ of $\mathcal M_D$ is a polynomial in $\ZZ[x_1, \dots, x_n]$ whose support is $\{\wt(C)\colon C\leq D\}$.
\end{prop}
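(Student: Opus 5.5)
This statement is cited from \cite{fink2018schubert}, so the plan is to reconstruct its proof from the definition of $\mathcal M_D$ as a span of products of minors. There are two parts: first that $\chi_D$ is a polynomial with integer (indeed nonnegative) coefficients, and then that its support is $\{\wt(C)\colon C\leq D\}$. For both, the key observation is how the torus acts on the spanning set. For $t=\mathrm{diag}(x_1,\dots,x_n)$ we have $f(Y)\cdot t=f(t^{-1}Y)$, and $t^{-1}Y$ scales row $i$ of $Y$ by $x_i^{-1}$. A minor $\det(Y_{D_j}^{C_j})$ uses rows $C_j$, so it is a $t$-eigenvector with eigenvalue $\prod_{i\in C_j}x_i^{-1}$. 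Hence each generator $\prod_j\det(Y_{D_j}^{C_j})$ is a weight vector with eigenvalue $x^{-\wt(C)}$, since row $i$ appears in $C_j$ exactly $\wt(C)_i$ times as $j$ ranges over columns.

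For the polynomiality, I first note that the $B$-action preserves $\mathcal M_D$. Left multiplication by $b^{-1}$ replaces rows of $Y$ by combinations of rows with larger or equal index. Expanding each minor multilinearly in rows then gives combinations of minors with row sets $C'_j\geq C_j$ entrywise (after sorting), up to sign or vanishing. By transitivity, $C'\le C\le D$ still holds. Next, $\mathcal M_D$ is finite-dimensional and spanned by weight vectors, so it decomposes into weight spaces $\bigoplus_a (\mathcal M_D)_a$, where $t$ acts by $x^{-a}$ on the $a$-piece. The trace of $\mathrm{diag}(x_i^{-1})$ is therefore $\sum_a \dim(\mathcal M_D)_a\, x^{a}$. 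This is a polynomial with nonnegative integer coefficients in $\ZZ[x_1,\dots,x_n]$.

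For the support, one containment is immediate: every nonzero weight space is spanned by generators of the corresponding weight, so every exponent in the support has the form $\wt(C)$ with $C\le D$. The converse requires showing that for each $C\le D$ the product $\prod_j\det(Y_{D_j}^{C_j})$ is nonzero. This holds because $Y$ is upper triangular with independent indeterminates. Writing $C_j=\{c_1<\dots<c_k\}$ and $D_j=\{d_1<\dots<d_k\}$, the condition $c_\ell\le d_\ell$ means the diagonal term $\prod_\ell y_{c_\ell d_\ell}$ is available. I would argue that this monomial cannot cancel in the determinant, for instance by taking the lexicographically extremal monomial in the Leibniz expansion, or by specializing $y_{c_\ell d_\ell}=1$ and all other variables to $0$.

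The main obstacle is exactly this nonvanishing argument, namely making sure the chosen diagonal term cannot cancel, while keeping the sign and triangularity bookkeeping clean. Once that is settled, the product of nonzero polynomials in a domain is nonzero, so the weight space for $\wt(C)$ is nonzero and $\wt(C)$ lies in the support.
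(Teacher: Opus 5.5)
Your proof is correct and is essentially the standard argument behind the cited result (the paper itself only quotes \cite{fink2018schubert} and gives no proof). Each generator $\prod_j\det(Y^{C_j}_{D_j})$ is a torus weight vector of weight $\wt(C)$, so $\chi_D=\sum_a\dim(\mathcal M_D)_a\,x^a$, and a generator is nonzero exactly when $C\le D$.

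One step is misjustified, though it is inessential, since the trace only needs stability under the diagonal torus, which is immediate from your weight-vector spanning set. In the $B$-stability paragraph you obtain $C'_j\ge C_j$, and transitivity with $C_j\le D_j$ then gives nothing. The right reason is that $\det(Y^{C'_j}_{D_j})=0$ whenever $C'_j\not\le D_j$. If $c'_\ell>d_\ell$, then rows $c'_\ell,\dots,c'_k$ of the upper triangular $Y$ vanish in columns $d_1,\dots,d_\ell$. So these $k-\ell+1$ rows of the minor are supported in only $k-\ell$ columns and are linearly dependent.

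Your ``main obstacle'' is also easier than you expect. The nonzero entries of $Y^{C_j}_{D_j}$ are distinct indeterminates, so distinct permutations in the Leibniz expansion give distinct monomials, and no cancellation is possible.
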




Originally defined by Monical, Tokcan, and Yong \cite{mty2019}, schubitopes are a family of generalized permutahedra associated to diagrams.  In this article, we will use the following characterization due to Fink, M{\'e}sz{\'a}ros, and St.~Dizier.  Given a diagram $D$, the \textbf{schubitope} $S_D$ is the convex hull of the set $\{\wt(C)\colon C\leq D\}$.  In particular, $S_D$ is the Newton polytope of $\chi_D$.

\subsection{Snow Diagrams}
To any diagram $D$, one can associate a diagram $\text{dark}(D)$ called the \textbf{dark cloud diagram} via the following process.  Iterate through the rows of $D$ from bottom to top.  In each row $i$, select the rightmost square $(i,j)$ of $D$ containing no element of $\text{dark}(D)$, and add $(i,j)$ to $\text{dark}(D)$.  If no such element exists in a given row, $\text{dark}(D)$ is left unchanged.
\begin{defnx}
    \label{def:snow-diagram}
    For any diagram $D$, the \textbf{snow diagram} is defined to be 
    \[\text{snow}(D):=D\cup\{(i,j):(i',j)\in\text{dark}(D),i'\geq i\}.\]
\end{defnx}
\begin{figure}
    \centering
    \scalebox{.7}{\begin{tikzpicture}
    \filldraw[draw=darkgray, color=green, opacity=.5] (0,3) rectangle (3,3.5);
    \filldraw[draw=darkgray, color=green, opacity=.5] (0,2) rectangle (2,2.5);
    \filldraw[draw=darkgray, color=green, opacity=.5] (0,.5) rectangle (.5,1.5);
    \draw[step=.5cm,black,very thin] (0,0) grid (4,4);
    \begin{scope}[xshift=5cm]
        \filldraw[draw=darkgray, color=green, opacity=.5] (0,3) rectangle (3,3.5);
    \filldraw[draw=darkgray, color=green, opacity=.5] (0,2) rectangle (2,2.5);
    \filldraw[draw=darkgray, color=green, opacity=.5] (0,1) rectangle (.5,2);
    \filldraw[draw=darkgray, color=green, opacity=.5] (0,2.5) rectangle (.5,3);
    \filldraw[draw=darkgray, color=green, opacity=.5] (0,3.5) rectangle (.5,4);
    \filldraw[draw=darkgray, color=green, opacity=.5] (1.5,2.5) rectangle (2,3);
    \filldraw[draw=darkgray, color=green, opacity=.5] (1.5,3.5) rectangle (2,4);
    \filldraw[draw=darkgray, color=green, opacity=.5] (2.5,3.5) rectangle (3,4);
    \filldraw[draw=darkgray, color=gray] (0,.5) rectangle (.5,1);
    \filldraw[draw=darkgray, color=gray] (1.5,2) rectangle (2,2.5);
    \filldraw[draw=darkgray, color=gray] (2.5,3) rectangle (3,3.5);
    \draw[step=.5cm,black,very thin] (0,0) grid (4,4);
    \end{scope}
    \end{tikzpicture}}
    \caption{Left: Skyline diagram for $\alpha=(0,6,0,4,0,1,1,0)$. Right: Snow diagram $\text{snow}(D_{Sky}(\alpha))$ with dark cloud diagram shown in gray.}
    \label{fig:skyline-snow}
\end{figure}
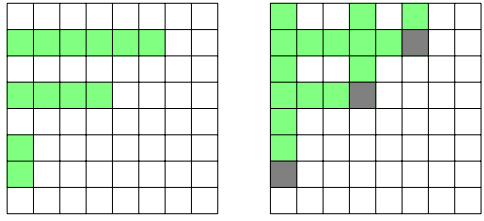

See Figure \ref{fig:skyline-snow} for an example of this construction.  A weak composition $\alpha$ is defined to be \textbf{snowy} if its positive entries are all distinct.  Pan and Yu \cite{pan2024top} show that snow diagrams connect to top-Lascoux polynomials in the following way.  
\begin{thm}[\cite{pan2024top}, Theorem 1.2, Proof of Lemma 4.5]
\label{thm:SnowDiagramLeading}
Let $\alpha$ be any weak composition.
    \begin{enumerate}
        \item The leading term of $\hat{\mathfrak{L}}_{\alpha}$ is a scalar multiple of $x^{\wt (\text{snow}(D_{Sky}(\alpha))}$.
        \item There exists a unique snowy weak composition $\gamma$ such that $\text{snow}(D_{Sky}(\alpha))=\text{snow}(D_{Sky}(\gamma))$.  For this $\gamma$, $\hat{\mathfrak{L}}_{\alpha}$ is a scalar multiple of $\hat{\mathfrak{L}}_{\gamma}$.
    \end{enumerate}
\end{thm}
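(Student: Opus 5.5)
Part (1) can be proved by induction on the number of inversions needed to sort $\alpha$ into a weakly decreasing composition, following the recursion defining $\mathfrak{L}_\alpha$. The first step is to isolate the top-degree part of that recursion. If $\alpha_i<\alpha_{i+1}$, then the highest-degree part of $x_i(1-x_{i+1})\mathfrak{L}_{s_i\alpha}$ is $-x_ix_{i+1}\hat{\mathfrak{L}}_{s_i\alpha}$. Since $x_ix_{i+1}$ is $s_i$-symmetric, it commutes with $\partial_i$. Hence, whenever $\partial_i\hat{\mathfrak{L}}_{s_i\alpha}\neq 0$,
\[\hat{\mathfrak{L}}_{\alpha}=-x_ix_{i+1}\,\partial_i\hat{\mathfrak{L}}_{s_i\alpha}.\]
If $\partial_i\hat{\mathfrak{L}}_{s_i\alpha}=0$, the top degree is unchanged and $\hat{\mathfrak{L}}_\alpha$ is instead the top part of $\partial_i(x_i\hat{\mathfrak{L}}_{s_i\alpha})$, which is just $\hat{\mathfrak{L}}_{s_i\alpha}$. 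For the base case, $\alpha$ weakly decreasing, $\hat{\mathfrak{L}}_\alpha=x^\alpha$. One checks directly that $\text{snow}(D_{Sky}(\alpha))=D_{Sky}(\alpha)$ in this case: the dark cloud in each row sits at the rightmost box, and every box above it is already in the skyline.

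For the inductive step, fix a monomial order adapted to the recursion, say a reverse-lexicographic order in which $x_i$ beats $x_{i+1}$. Write the leading monomial of $\hat{\mathfrak{L}}_{s_i\alpha}$ as $x^a$ with $a=\wt(\text{snow}(D_{Sky}(s_i\alpha)))$. Then $\partial_i$ sends $x^a$ to a sum whose leading term is $x^{a-e_i}$ when $a_i>a_{i+1}$. Multiplying by $x_ix_{i+1}$ gives leading exponent $a+e_{i+1}$, or one obtains the symmetric analogue in the degenerate case. The combinatorial heart of the argument is then a lemma about how swapping rows $i$ and $i+1$ of a skyline changes the snow diagram. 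When the longer row moves down from row $i$ to row $i+1$, the dark clouds of the two rows are recomputed, and exactly one additional snow box appears. This box lies in row $i+1$ when the lengths are distinct and nonzero, and the weight is unchanged otherwise. I would prove this by comparing the bottom-to-top dark-cloud procedure row by row: rows below $i+1$ are unaffected, and rows above $i$ see the same set of occupied columns. One must also check that no cancellation occurs in the leading coefficient. This holds because the coefficient picks up a sign and a positive integer at each step.

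For part (2), I would construct $\gamma$ explicitly. Scan the nonzero entries of $\alpha$ from bottom to top; whenever a value repeats a value already seen below, decrease it to the column of its dark cloud, or to $0$ if that row's dark cloud lies in a column not used by the skyline. The result should be snowy and have the same snow diagram. Uniqueness follows because a snowy $\gamma$ can be read off from $\text{snow}(D_{Sky}(\gamma))$: its nonzero entries are exactly the columns of the dark clouds that are rightmost in their rows. To show that $\hat{\mathfrak{L}}_\alpha$ is a scalar multiple of $\hat{\mathfrak{L}}_\gamma$, I would run the recursion above in parallel for $\alpha$ and $\gamma$. Sort both by the same sequence of simple transpositions, and show at each stage that the two top-Lascoux polynomials differ by a scalar. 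The base case is a direct identity for weakly decreasing compositions with repeated parts, where truncating repeated rows does not change the top component up to scalar; it can be verified by comparing with $\partial_i$ applied to symmetric factors.

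The main obstacle is this last parallel-recursion step together with the snow-diagram swap lemma. In particular, one must control the degenerate case where $\partial_i\hat{\mathfrak{L}}_{s_i\alpha}=0$ and show that it occurs for $\alpha$ exactly when it occurs for $\gamma$. I would attack this by characterizing $\partial_i\hat{\mathfrak{L}}=0$ as the condition that rows $i$ and $i+1$ of the snow diagrams have the same set of dark-cloud columns.
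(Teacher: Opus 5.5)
The proposal has a genuine gap. Your recursion for top components fails in exactly the case you call degenerate.

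Write $\mathfrak{L}_\alpha=\partial_i(x_i\mathfrak{L}_{s_i\alpha})-x_ix_{i+1}\partial_i(\mathfrak{L}_{s_i\alpha})$ and let $d$ be the top degree of $\mathfrak{L}_{s_i\alpha}$. If $\partial_i\hat{\mathfrak{L}}_{s_i\alpha}=0$, then the degree-$d$ component of $\mathfrak{L}_\alpha$ is
\[
\hat{\mathfrak{L}}_{s_i\alpha}-x_ix_{i+1}\,\partial_i\bigl(\mathfrak{L}^{(d-1)}_{s_i\alpha}\bigr),
\]
where $\mathfrak{L}^{(d-1)}_{s_i\alpha}$ is the degree-$(d-1)$ component of $\mathfrak{L}_{s_i\alpha}$. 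You dropped the second term, and it need not vanish. Take $\alpha=(0,1,1)$ and $i=1$. Then $\mathfrak{L}_{(1,0,1)}=x_1x_2+x_1x_3-x_1x_2x_3$ has top component $-x_1x_2x_3$, which is symmetric in $x_1,x_2$. But $\mathfrak{L}_{(0,1,1)}=x_1x_2+x_1x_3+x_2x_3-2x_1x_2x_3$, so $\hat{\mathfrak{L}}_{(0,1,1)}=-2x_1x_2x_3\neq\hat{\mathfrak{L}}_{(1,0,1)}$. The extra $-x_1x_2x_3$ is exactly $-x_1x_2\,\partial_1(x_1x_2+x_1x_3)$. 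So top components do not satisfy a closed recursion. In the degenerate case $\hat{\mathfrak{L}}_\alpha$ depends on the subleading component of $\mathfrak{L}_{s_i\alpha}$, and an induction hypothesis that only describes $\hat{\mathfrak{L}}_{s_i\alpha}$ cannot determine it.

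This undermines both parts.

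For (1), you would have to show that the extra term creates no monomial larger than $x^{\wt(\text{snow}(D_{Sky}(\alpha)))}$ and does not cancel the leading coefficient. Your ``a sign times a positive integer at each step'' argument does not cover this, because the new leading coefficient is a sum of two contributions. The nondegenerate step also needs more than you give: the leading monomial of $\partial_i f$ is not determined by the leading monomial $x^a$ of $f$, since $x^a$ can cancel against $x^{s_ia}$ in $f-s_if$.

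For (2), the nontrivial scalars arise exactly at degenerate steps. For example, $\hat{\mathfrak{L}}_{(0,1,1)}=-2\hat{\mathfrak{L}}_{(0,0,1)}$ with $\gamma=(0,0,1)$, whereas your formula predicts $-1$. Also, $\alpha$ and $\gamma$ can have different sorted forms: $\alpha=(1,1)$ is already sorted but its $\gamma=(0,1)$ is not. So ``sort both by the same sequence of transpositions'' is not well defined.

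The missing ingredient is control of the lower-degree components, or a combinatorial model of the top-degree part of $\mathfrak{L}_\alpha$ that identifies its maximal terms directly. The paper does not supply this; it imports the statement from Pan and Yu.

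Your uniqueness argument can be repaired, but as written it reads off dark clouds, which are not recorded in the set $\text{snow}(D_{Sky}(\gamma))$. Instead, reconstruct $\gamma$ from the bottom row upward. For snowy $\gamma$, row $r$ of the snow diagram is $\{1,\ldots,\gamma_r\}\cup\{\gamma_{r'}\colon r'>r,\ \gamma_{r'}>0\}$, and $\gamma_r$ does not lie in the second set when $\gamma_r>0$, which pins it down.
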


Furthermore, Yu \cite{yu2023connection} shows the following.
\begin{thm}[\cite{yu2023connection}, Corollary 7.3]
\label{thm:SnowDiagramChar}
    For any weak composition $\alpha$, $\hat{\mathfrak{L}}_{\alpha}$ is a scalar multiple of $\chi_{\text{snow}(D_{Sky}(\alpha))}$.  
\end{thm}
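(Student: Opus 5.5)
The plan is to prove the statement by induction along the defining recursion for $\mathfrak{L}_\alpha$, comparing it with a matching recursion for dual characters of flagged Weyl modules. First, by Theorem~\ref{thm:SnowDiagramLeading}(2) I may replace $\alpha$ by the unique snowy $\gamma$ with the same snow diagram, since $\hat{\mathfrak{L}}_\alpha$ is a scalar multiple of $\hat{\mathfrak{L}}_\gamma$. I would then induct on the number of inversions of $\alpha$ (pairs $i<j$ with $\alpha_i<\alpha_j$), tracking both sides, and passing back to a snowy representative whenever needed.

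\emph{Base case.} Take $\alpha$ weakly decreasing, so that $\hat{\mathfrak{L}}_\alpha=x^\alpha$. Here $D_{Sky}(\alpha)$ is a left-justified Young diagram. Each dark cloud cell lies at the end of some row $i$, and every row above row $i$ is at least as long, so the cells added above it already lie in $D$. Hence $\text{snow}(D_{Sky}(\alpha))=D_{Sky}(\alpha)$. For a Young diagram, every column is an initial segment $[k]$, and $C_j\le[k]$ forces $C_j=[k]$. So the only $C\le D$ is $D$ itself, and $\chi_D=x^{\alpha}$ by Proposition~\ref{prop:support-of-chi} together with the fact that the module is one-dimensional.

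\emph{Inductive step.} Suppose $\alpha_i<\alpha_{i+1}$. The operator $f\mapsto\partial_i(x_i(1-x_{i+1})f)$ splits into a degree-preserving part $\pi_i f=\partial_i(x_if)$ and a degree-raising part $-x_ix_{i+1}\partial_i f$. Consequently $\hat{\mathfrak{L}}_\alpha$ equals $-x_ix_{i+1}\partial_i\hat{\mathfrak{L}}_{s_i\alpha}$ when this is nonzero, and otherwise it comes from the $\pi_i$ part. I need to decide which case occurs purely from $\text{snow}(D_{Sky}(s_i\alpha))$. This can be read off from Theorem~\ref{thm:SnowDiagramLeading}(1), since the total degree of $\hat{\mathfrak{L}}_\alpha$ is $|\text{snow}(D_{Sky}(\alpha))|$. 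On the dual character side I would use the Magyar/Fink--M\'esz\'aros--St.~Dizier relation: if $D$ is a diagram whose row $i+1$ is contained in row $i$ columnwise in a suitable sense, then $\chi_{s_iD}=\pi_i\chi_D$. I would also use the analogous statement in which full columns $\{i,i+1\}$ contribute the factor $x_ix_{i+1}$ and can be peeled off. The aim is to show that $\text{snow}(D_{Sky}(\alpha))$ arises from $\text{snow}(D_{Sky}(s_i\alpha))$ in exactly such a way: swapping rows $i,i+1$, possibly adding new dark cloud cells in row $i+1$ together with the snow above them, in the pattern that $x_ix_{i+1}\partial_i$ or $\pi_i$ produces on characters.

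\emph{Main obstacle.} The hard step is this combinatorial comparison of snow diagrams under $s_i$. Changing the lengths of rows $i$ and $i+1$ can alter the dark cloud choices in every row above, because the process is greedy from bottom to top. My first attempt would be to show, for snowy $\alpha$, that the dark cloud cells in rows other than $i,i+1$ are unchanged. The argument would be that the column sets of dark cells below row $i$ are determined by the distinct positive row lengths, which $s_i$ merely permutes. The difference would then be localized to columns $\alpha_i+1,\dots,\alpha_{i+1}$ and to the rows $i,i+1$, where I would verify the required containment hypothesis for the Demazure-operator identity directly. Scalars are ignored throughout, since the claim is only up to a scalar multiple.
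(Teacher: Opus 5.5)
\paragraph{Verdict.} Your reduction to a snowy $\gamma$ via Theorem~\ref{thm:SnowDiagramLeading}(2) is exactly the paper's argument, and your base case is fine. The paper then simply imports the snowy case from Yu. You instead try to reprove that case, and there is a genuine gap.

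\paragraph{The snow-diagram combinatorics is not the obstacle.} For snowy $\gamma$ the dark cloud cells are precisely $(r,\gamma_r)$ with $\gamma_r>0$, since the columns already used below row $r$ are other, distinct row lengths. Hence column $c$ of $\text{snow}(D_{Sky}(\gamma))$ is $[1,R_c]\cup\{r:\gamma_r>c\}$, where $R_c$ is the row of length $c$ (empty if none). Take snowy $\alpha$ with $\alpha_i<\alpha_{i+1}=:b$ and set $D:=\text{snow}(D_{Sky}(s_i\alpha))$. Then $\text{snow}(D_{Sky}(\alpha))=s_iD\cup\{(i,b)\}$ and $(i+1,b)\notin D$. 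So the degree always rises by exactly one, and $\hat{\mathfrak L}_\alpha=-x_ix_{i+1}\partial_i\hat{\mathfrak L}_{s_i\alpha}$. Your claim ``otherwise it comes from the $\pi_i$ part'' would be wrong in general: $-x_ix_{i+1}\partial_i$ of the next-lower component of $\mathfrak L_{s_i\alpha}$ also contributes, and your induction does not track that component. Here, however, that case never occurs.

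\paragraph{The missing step.} What you actually need is the character identity: $\chi_{s_iD\cup\{(i,b)\}}$ equals, up to a scalar, $x_ix_{i+1}\partial_i\chi_D$.
\begin{itemize}
\item Note that $x_ix_{i+1}\partial_i f=\pi_i(x_{i+1}f)$.
\item Set $E:=D\cup\{(i+1,b)\}$. No column of $E$ contains $i+1$ without $i$, and $s_iE=\text{snow}(D_{Sky}(\alpha))$. Granting the Demazure relation $\chi_{s_iE}=\pi_i\chi_E$, you would still need $\pi_i\bigl(\chi_E-x_{i+1}\chi_D\bigr)=0$.
\item This is immediate only when column $b$ of $D$ is packed. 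Only packed columns $[k]$ split off from a dual character, as $\omega_k$, because $\chi$ is not multiplicative over columns. So columns merely containing $i$ and $i+1$ cannot be ``peeled off'' as $x_ix_{i+1}$. For example, a single column $\{2,3\}$ has $\chi=x_1x_2+x_1x_3+x_2x_3$.
\item Column $b$ of $D$ is $[1,i]\cup\{r>i+1:\alpha_r>b\}$, which is typically not packed. For $\alpha=(0,2,3)$ and $i=1$, $D$ has columns $\{1,3\},\{1,3\},\{1,2,3\}$. Then $x_2\chi_D-\chi_E=x_1^3x_2^4x_3\neq 0$.
\end{itemize}
This difference is indeed killed by $\pi_1$, as Yu's theorem forces, but nothing in your toolkit proves the vanishing in general. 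That vanishing is precisely the nontrivial content of the cited result. As written, the inductive step is not established.
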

Note that \cite{yu2023connection} states this result only for snowy weak compositions, but by Theorem \ref{thm:SnowDiagramLeading}, it also holds in this more general sense.

\subsection{Orthodontia}
In \cite{magyar1998schubert}, Magyar introduced the orthodontia algorithm on certain diagrams, yielding a formula for Schubert polynomials.  M{\'e}sz{\'a}ros, Setiabrata, and St.~Dizier \cite{meszaros2022orthodontia} extended Magyar's formula to give an orthodontia formula for Grothendieck polynomials.  We follow their exposition here.

Given a fixed $\%$-avoiding diagram $D \subset [n] \times [n]$, we will define $\mathbf{i}(D)=(i_1(D),\ldots,i_l(D))$, $\mathbf{k}(D)=(k_1(D),\ldots,k_n(D))$, and $\mathbf{m}(D)=(m_1(D),\ldots,m_l(D))$ via the following algorithm.  A \textbf{missing tooth} of a diagram $D$ is a square $(i,j)\notin D$ such that $(i+1,j)\in D$.  We say a column of $D$ is \textbf{packed} if it contains no missing teeth.  For every $a\in [n]$, let $k_a(D)$ equal the number of packed columns of $D$ which contain exactly $a$ squares (i.e. they contain precisely the top $a$ squares in the column).  Set $b=1$.  We then iterate the following steps.

(1) Replace all the packed columns with empty columns, and denote the resulting diagram as $D{\_}$.

(2) Let $(D{\_})_j$ be the leftmost non-empty column of $D{\_}$, and let $(i_b(D),j)$ be the topmost missing tooth in $(D{\_})_j$.  Let $^{i_b(D)}D$ be the diagram obtained by swapping rows $i_b(D)$ and $i_b(D)+1$ of $D{\_}$.  Set $m_b(D)$ to be the number of non-empty packed columns of $^{i_b(D)}D$.  Then increment $b$ by $1$, and repeat steps $(1)$ and $(2)$ using $^{i_b(D)}D$ in place of $D$.  Continue this process until no non-empty columns remain.

For a given $D$, the triple $(\mathbf{i}(D),\mathbf{k}(D),\mathbf{m}(D))$ is referred to as the \textbf{orthodontic sequence} of $D$.  Let $\omega_i$ denote $x_1\cdots x_i$.  Let $\overline{\pi}_i$ be the \textbf{Demazure-Lascoux operator} on polynomials of $x_1,\ldots,x_n$ defined by $\overline{\pi}_i(f)=\partial_i(x_i(1-x_{i+1})f)$.  Note that Demazure-Lascoux operators satisfy the following commutativity and braid relations:
\begin{enumerate}
    \item $\overline{\pi}_i\overline{\pi}_j=\overline{\pi}_j\overline{\pi}_i$ if $|i-j|>1$
    \item $\overline{\pi}_i\overline{\pi}_{i+1}\overline{\pi}_i=\overline{\pi}_{i+1}\overline{\pi}_i\overline{\pi}_{i+1}$
\end{enumerate}


\begin{defn}
    \label{def:OrthodontiaPolynomial}
    For any $\%$-avoiding $D$ with orthodontic sequence $\mathbf{i}(D)=(i_1(D),\ldots,i_l(D))$, $\mathbf{k}(D)=(k_1(D),\ldots,k_n(D))$, and $\mathbf{m}(D)=(m_1(D),\ldots,m_l(D))$, define
    \[\mathscr{G}_D := \omega_1^{k_1(D)}\cdots\omega_n^{k_n(D)}\overline{\pi}_{i_1(D)}(\omega_{i_1(D)}^{m_1(D)}\overline{\pi}_{i_2(D)}(\omega_{i_2(D)}^{m_2(D)}\cdots\overline{\pi}_{i_l(D)}(\omega_{i_l(D)}^{m_l(D)})\cdots)).
		\] \end{defn}

\begin{thm}[\cite{meszaros2022orthodontia},Theorem 1.1]
    \label{thm:GrothOrthodontia}
    For any $w\in S_n$, $\mathfrak{G}_w=\mathscr{G}_{D(w)}$.
\end{thm}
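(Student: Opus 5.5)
The plan is to follow Magyar's proof of the orthodontia formula for Schubert polynomials and induct on the length $l$ of the orthodontic sequence. Throughout, the operators $\partial_i$ are replaced by the Demazure--Lascoux operators $\overline{\pi}_i$, and the Schubert recursions are replaced by their $K$-theoretic analogues. The statement to prove at each stage is slightly more general than the theorem: for every diagram $E$ produced in the course of the orthodontia algorithm on $D(w)$, $\mathscr{G}_E$ equals a specific Grothendieck-type polynomial. Here $E$ agrees, after deleting empty columns, with the Rothe diagram of some permutation $v$, possibly augmented by packed columns. The base case is the empty diagram, where $\mathscr{G}=1=\mathfrak{G}_{\mathrm{id}}$.

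\textbf{Step 1: packed columns.} If $D(w)$ has packed columns of heights $a$ with multiplicities $k_a$, I would show that
\[\mathfrak{G}_w=\omega_1^{k_1}\cdots\omega_n^{k_n}\,\mathfrak{G}_{w'},\]
where $D(w')$ (or more precisely a diagram whose $\mathscr{G}$ is defined by the same recursion) is $D(w)$ with those columns emptied.
\begin{itemize}
\item For a Rothe diagram, a packed column $j$ of height $a$ means that the top $a$ cells of column $j$ lie in $D(w)$. The dominant part of $w$ then contributes the factor $\omega_a$.
\item This follows from the fact that $\mathfrak{G}$ of a dominant permutation is the monomial $x^{\text{code}}$, which one gets from the top-degree base case and $\overline{\pi}_i(x^\lambda)=x^\lambda$ for partitions.
\item It also uses that $\overline{\pi}_i$ commutes with multiplication by any $g$ symmetric in $x_i,x_{i+1}$. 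In particular, $\overline{\pi}_i(\omega_a f)=\omega_a\overline{\pi}_i(f)$ for $a\neq i$.
\end{itemize}

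\textbf{Step 2: the row swap.} Let $(i,j)$ be the topmost missing tooth in the leftmost nonempty column. Swapping rows $i$ and $i+1$ of $D_{\_}$ produces a diagram corresponding to a permutation $u$ with one fewer inversion in the relevant position, namely $D(u)=s_iD(w_{\_})$ up to the removed columns. I would then prove the key identity
\[\mathfrak{G}_{w_{\_}}=\overline{\pi}_i\big(\omega_i^{m}\,\mathfrak{G}_{u'}\big),\]
where $m$ counts the columns that become packed after the swap. This is the $K$-analogue of Magyar's $\mathfrak{S}_{w}=\partial_i$-type step. I would obtain it from the defining recursion $\mathfrak{G}_w=\partial_i((1-x_{i+1})\mathfrak{G}_{ws_i})$, rewritten via the relation between left and right multiplication. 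Concretely, I would use the transpose symmetry $\mathfrak{G}_{w}(x)\leftrightarrow\mathfrak{G}_{w^{-1}}$ and the row/column duality of Rothe diagrams so that row swaps correspond to the available recursion. The identity $\overline{\pi}_i(f)=\partial_i(x_i(1-x_{i+1})f)$ then absorbs one factor $x_i$ from $\omega_i^{m}$ into the operator. After the swap, the remaining diagram is again a (shifted) Rothe diagram, since $\%$-avoidance is preserved, so the induction hypothesis applies.

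\textbf{Main obstacle.} I expect Step 2 to be the hard part. The bookkeeping showing that the extra factor $x_i(1-x_{i+1})$ in $\overline{\pi}_i$ exactly matches the $(1-x_{i+1})$ in the Grothendieck recursion requires care. So does the bookkeeping showing that the columns which become packed after the swap contribute precisely $\omega_i^{m_b}$ inside the operator rather than outside it. To handle this, I would first prove two lemmas:
\begin{itemize}
\item $\overline{\pi}_i(\omega_i f)=\omega_i\,\overline{\pi}_i(f)$ fails in general, but $\overline{\pi}_i(\omega_{i} g)=\omega_{i}\, g$-type identities hold when $g$ is symmetric in $x_i,x_{i+1}$.
\item The intermediate polynomials are symmetric in the appropriate variables whenever $i$ is not a descent of the relevant permutation.
\end{itemize}
With these, the computation reduces to Magyar's combinatorial verification that the orthodontic sequence of $D(w)$ tracks a reduced decomposition.
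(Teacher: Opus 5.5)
The paper gives no proof of this statement; it is imported from \cite{meszaros2022orthodontia}. I can therefore only assess your plan on its own terms, and it has a genuine gap.

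\textbf{The inductive invariant is false.} Your induction assumes that every diagram produced by the orthodontia algorithm on $D(w)$ is, after discarding empty and packed columns, a Rothe diagram, so that its $\mathscr{G}$ is a Grothendieck polynomial. This already fails at Step~1.

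For $w=3142$ one has $D(w)=\{(1,1),(1,2),(3,2)\}$. Column $1$ is packed, and what remains, $\{(1,2),(3,2)\}$, is not a Rothe diagram even after deleting empty columns. Indeed, a nonempty first column of a Rothe diagram $D(u)$ is the initial segment $\{1,\dots,u^{-1}(1)-1\}$, never $\{1,3\}$. At the level of polynomials, $\mathfrak{G}_{3142}=x_1\,\overline{\pi}_2(x_1x_2)=x_1^2(x_2+x_3-x_2x_3)$. Since $x_1x_2+x_1x_3$ is not a Schubert polynomial, there is no $w'$ with $\mathfrak{G}_{3142}=\omega_1\mathfrak{G}_{w'}$.

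The same happens for $w=1432$, which has no packed columns. The first swap, at $i_1=1$, gives $\{(1,2),(1,3),(3,2)\}$. Removing the newly packed column leaves the same non-Rothe diagram, whose $\mathscr{G}$ is the Lascoux polynomial $\mathfrak{L}_{(1,0,1)}=\overline{\pi}_2(x_1x_2)$, not a Grothendieck polynomial.

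Your fallback (``a diagram whose $\mathscr{G}$ is defined by the same recursion'') turns Step~1 into a restatement of Definition~\ref{def:OrthodontiaPolynomial}. The induction hypothesis then says nothing about these diagrams. What is missing is an interpretation of $\mathscr{G}_E$ that is valid for every \%-avoiding diagram $E$ the algorithm can produce. This is why Magyar \cite{magyar1998schubert} works with flagged modules of arbitrary \%-avoiding diagrams, realized via line bundles on Bott--Samelson varieties so that each application of $\pi_i$ has a geometric meaning, rather than with Schubert polynomials of permutations. A $K$-theoretic proof needs a comparable device, and your outline does not supply one.

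\textbf{Several supporting claims are also wrong.}

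A missing tooth $(i,j)$ of $D(w)$ forces $w(i)<j<w(i+1)$, and then $s_iD(w)=D(ws_i)\setminus\{(i,w(i))\}$. So the swap moves toward the \emph{longer} permutation $ws_i$, and it does not produce a Rothe diagram. Going through the defining recursion would require an identity like $\mathfrak{G}_{ws_i}=x_i\,\omega_i^{m}\,\mathscr{G}_E$, where $E$ is again non-Rothe and so again outside your induction.

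There is no symmetry exchanging $\mathfrak{G}_w(x)$ and $\mathfrak{G}_{w^{-1}}(x)$ in a single set of variables: for example $\mathfrak{S}_{231}=x_1x_2$ while $\mathfrak{S}_{312}=x_1^2$. None is needed either, since row swaps already correspond to $w\mapsto ws_i$, the operation in the defining recursion.

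The identity $\overline{\pi}_i(x^\lambda)=x^\lambda$ holds only when $\lambda_i=\lambda_{i+1}$. For instance, $\overline{\pi}_1(x_1)=x_1+x_2-x_1x_2$.

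Your first lemma fails: for $g$ symmetric in $x_i,x_{i+1}$ one has $\overline{\pi}_i(\omega_i g)=\omega_{i-1}(x_i+x_{i+1}-x_ix_{i+1})\,g$, not $\omega_i g$.

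The orthodontic word is not a reduced word of $w$: for $w=1432$ it is $(1,2)$, while $\ell(w)=3$.
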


\begin{thm}[\cite{meszaros2022orthodontia},Section 7]
    \label{thm:LascOrthodontia}
    For any weak composition $\alpha$, $\mathfrak{L}_{\alpha}=\mathscr{G}_{D_{\text{Sky}}(\alpha)}$.  
\end{thm}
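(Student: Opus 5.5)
The statement to prove is Theorem~\ref{thm:LascOrthodontia}: $\mathfrak{L}_\alpha=\mathscr{G}_{D_{\text{Sky}}(\alpha)}$. I will argue by induction along the recursive definition of $\mathfrak{L}_\alpha$, i.e.\ on the number of inversions $\#\{i<j\colon \alpha_i<\alpha_j\}$ of $\alpha$.

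\textbf{Base case.} Take $\alpha$ weakly decreasing, with $\lambda=\alpha$. Then $D_{\text{Sky}}(\alpha)$ is a Young diagram. Every column $j$ consists of the top $\lambda'_j$ cells, so every column is packed and the algorithm stops after step (1) with $l=0$. Here $k_a$ counts the columns of height $a$, which is $\lambda_a-\lambda_{a+1}$. Hence
\[\mathscr{G}_D=\prod_a\omega_a^{\lambda_a-\lambda_{a+1}}=x^\lambda=\mathfrak{L}_\alpha.\]

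\textbf{Inductive step.} Suppose $\alpha$ is not weakly decreasing, and let $D=D_{\text{Sky}}(\alpha)$.
\begin{enumerate}
\item Since $D$ is left-justified, its packed columns are exactly the leftmost $\min_{a\le i}\alpha_a$-type columns. Precisely, column $j$ is packed iff $j\le\alpha_1,\dots,\alpha_h$ and $j>\alpha_{h+1},\ldots$ for some $h$. So the packed columns contribute $\omega_1^{k_1}\cdots\omega_n^{k_n}$, which equals $x^{\beta}$ for the "staircase-shaped" initial part.
\item In $D\_$, the leftmost nonempty column has its topmost missing tooth at the first row $i$ for which $\alpha_i<\alpha_{i+1}$ restricted to the remaining columns. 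Swapping rows $i,i+1$ of a skyline gives the skyline of $s_i$ applied to the truncated composition.
\end{enumerate}
The key step is a comparison lemma: if $\alpha_i<\alpha_{i+1}$ and $i$ is the orthodontic choice, then
\[\mathscr{G}_{D_{\text{Sky}}(\alpha)}=\omega\text{-factor}\cdot\overline{\pi}_i\bigl(\omega_i^{m}\,\mathscr{G}_{D_{\text{Sky}}(\alpha')}\bigr),\]
where $\alpha'$ is obtained from $\alpha$ by deleting the packed columns and swapping entries $i,i+1$. Combined with induction this yields an expression of the form $x^\beta\,\overline{\pi}_i(\ldots\mathfrak{L}_{\gamma})$. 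To identify it with $\mathfrak{L}_\alpha$, I would use two facts:
\begin{enumerate}
\item $\mathfrak{L}_\alpha=\overline{\pi}_i\mathfrak{L}_{s_i\alpha}$ whenever $\alpha_i<\alpha_{i+1}$; this is the definition.
\item If $f$ is symmetric in $x_i,x_{i+1}$, then $\overline{\pi}_i(fg)=f\,\overline{\pi}_i(g)$. This holds because $\partial_i$ is $f$-linear for such $f$.
\end{enumerate}
Since $\omega_a$ is symmetric in $x_i,x_{i+1}$ for $a\ne i$, the packed-column factors $\omega_a^{k_a}$ with $a\neq i$ commute past $\overline{\pi}_i$. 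Each packed column removed corresponds to lowering every entry of an initial segment, and the relation $\mathfrak{L}_{\alpha+(1^a,0,\dots)}=\omega_a\mathfrak{L}_\alpha$ follows from fact 2 as long as the operators used have index $\neq a$.

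The main obstacle is the factor $\omega_i^{m}$ that sits inside $\overline{\pi}_i$. This factor is not symmetric in $x_i,x_{i+1}$, so it cannot be pulled outside. I would handle it by checking that it exactly accounts for the columns that become packed after the swap, namely columns $j$ with $\alpha_{i}<j\le\alpha_{i+1}$ in the truncated picture once rows $1..i$ are full. In Lascoux language this gives the identity $\mathfrak{L}_{s_i\alpha}=\omega_i^{m}\mathfrak{L}_{\alpha''}$ for the reduced composition. This identity must be proven directly, by showing it is compatible with the recursion. Alternatively, I would reduce to the fact that both sides are the unique polynomials obtained from the same reduced word of the minimal-length permutation sorting $\alpha$. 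This uses the braid relations for $\overline{\pi}$ to reconcile the orthodontic order of swaps with any chosen sorting order, which guarantees that $\mathfrak{L}_\alpha$ is well defined and equals $\mathscr{G}_D$.
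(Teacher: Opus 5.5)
The paper gives no proof of this statement; it is quoted from \cite{meszaros2022orthodontia}. Your induction is the natural argument, but as written it stops exactly where the work is. The identity you say ``must be proven directly'' and the condition ``as long as the operators used have index $\neq a$'' are the same unverified lemma, and the obstacle you describe comes from a double count. Write $\tilde\alpha$ for $\alpha$ with its packed columns deleted, and $i$ for the orthodontic index, so $\tilde\alpha_i=0<\tilde\alpha_{i+1}$. Then ${}^{i}D$ is $D_{\text{Sky}}(s_i\tilde\alpha)$ up to empty columns, which do not affect $\mathscr{G}$. Its own packed columns are exactly the $m$ new columns of height $i$. So $\mathscr{G}_{D_{\text{Sky}}(s_i\tilde\alpha)}$ already begins with $\omega_i^{m}$, and your comparison lemma counts this factor twice. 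Read directly off Definition~\ref{def:OrthodontiaPolynomial}, the recursion is $\mathscr{G}_{D_{\text{Sky}}(\alpha)}=\omega_1^{k_1}\cdots\omega_n^{k_n}\,\overline{\pi}_i\bigl(\mathscr{G}_{D_{\text{Sky}}(s_i\tilde\alpha)}\bigr)$. Induction turns the inner term into $\mathfrak{L}_{s_i\tilde\alpha}$, and the Lascoux recursion turns $\overline{\pi}_i\mathfrak{L}_{s_i\tilde\alpha}$ into $\mathfrak{L}_{\tilde\alpha}$. Nothing ever has to cross $\overline{\pi}_i$.

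What is left is $\mathfrak{L}_\alpha=\omega_1^{k_1}\cdots\omega_n^{k_n}\mathfrak{L}_{\tilde\alpha}$, and the missing idea is why the sorting operators avoid index $h$. If column $j$ of $D_{\text{Sky}}(\alpha)$ is packed of height $h$, then $\beta=\alpha-(1^h,0^{n-h})$ satisfies $\min_{r\le h}\beta_r\ge j-1\ge\max_{r>h}\beta_r$. Under this hypothesis every ascent $r$ of $\beta$ has $r\neq h$, $s_r\beta$ again satisfies the hypothesis, and $s_r$ fixes $(1^h,0^{n-h})$. Induction on inversions then gives $\mathfrak{L}_{\beta+(1^h,0^{n-h})}=\overline{\pi}_r\bigl(\omega_h\mathfrak{L}_{s_r\beta}\bigr)=\omega_h\overline{\pi}_r\mathfrak{L}_{s_r\beta}=\omega_h\mathfrak{L}_\beta$, with base case $x^{\beta+(1^h,0^{n-h})}=\omega_hx^\beta$. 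Deleting column $j$ turns $D_{\text{Sky}}(\alpha)$ into $D_{\text{Sky}}(\beta)$ and keeps the other packed columns packed, so you can iterate. Your identity $\mathfrak{L}_{s_i\tilde\alpha}=\omega_i^m\mathfrak{L}_{\alpha''}$ is just the case $h=i$, and the braid-relation fallback is unnecessary.

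Three smaller corrections:
\begin{itemize}
\item Packed columns need not be leftmost: $\alpha=(3,1,2)$ has columns $1$ and $3$ packed but not column $2$. Only your ``precisely'' clause is right.
\item The orthodontic $i$ is generally not the first ascent of $\tilde\alpha$; for $\tilde\alpha=(1,2,0,1)$ it is $3$. So you do rely on the Lascoux recursion being independent of the chosen ascent, here and in the lemma above.
\item The induction passes from $\alpha$ to $s_i\tilde\alpha$, not to $s_i\alpha$. This still has fewer inversions than $\alpha$, because $t\mapsto t-\#\{\text{packed } j\le t\}$ is weakly increasing. Alternatively, induct on the orthodontic length $l$.
\end{itemize}
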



\begin{lem}[\cite{setiabrata2024double}, Lemma 4.5, Corollary 4.6]
    \label{lem:ortho_props}
    Suppose that $D$ is a column permutation of a skyline diagram, and let $\mathbf{i}(D)=(i_1(D),\ldots,i_l(D))$, $\mathbf{k}(D)=(k_1(D),\ldots,k_n(D))$, and $\mathbf{m}(D)=(m_1(D),\ldots,m_l(D))$ be the orthodontic sequence of $D$.  Then
    \begin{enumerate}
        \item Every diagram $^{i_b(D)}D$ in the orthodontic sequence of $D$ is a column permutation of a skyline diagram.
        \item If $k_b(D)\neq 0$, then $i_j(D) \neq b$ for all $j$.
        \item If $m_b(D) \neq 0$, then $i_j(D) \neq i_b(D)$ for all $j>b$.
        \item $\mathscr{G}_D := \overline{\pi}_{i_1(D)}(\overline{\pi}_{i_2(D)}(\cdots\overline{\pi}_{i_l(D)}(\omega_1^{k_1(D)}\cdots\omega_n^{k_n(D)}\omega_{i_1(D)}^{m_1(D)}\cdots\omega_{i_l(D)}^{m_l(D)})\cdots))$
    \end{enumerate}
\end{lem}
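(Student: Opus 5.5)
The plan is to first identify the combinatorial structure behind ``column permutation of a skyline diagram'' and then derive all four parts from it. In a skyline diagram $D_{Sky}(\alpha)$, column $j$ is the set $\{i\colon \alpha_i\geq j\}$, so the columns are nested. Conversely, any diagram whose columns are pairwise comparable under inclusion is a column permutation of the skyline diagram of $\alpha_i=\#\{\text{columns containing } i\}$. So I will use the characterization: $D$ is a column permutation of a skyline diagram iff its columns form a chain under inclusion.

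For part (1), I argue by induction along the orthodontia algorithm. Step (1) of the algorithm only deletes columns, and a subfamily of a chain is still a chain. Step (2) applies the same row transposition $s_{i_b}$ to every column. A bijection of $[n]$ applied to all sets preserves inclusions. Hence every $^{i_b(D)}D$ again has nested columns.

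For part (2), suppose $k_b(D)\neq 0$, so $[b]$ is a packed column of $D$. Since columns are nested, every column $C$ satisfies $C\subseteq[b]$ or $C\supseteq[b]$. The key invariant I will maintain is that every column is either contained in $[b]$ or contains $[b]$. Such a column never has a missing tooth in row $b$:
\begin{itemize}
\item if $C\supseteq[b]$, then $b\in C$;
\item if $C\subseteq[b]$, then $b+1\notin C$.
\end{itemize}
Hence the algorithm never picks $i=b$. Every swap it performs is at some $i\neq b$, and $s_i$ with $i\neq b$ preserves both $[b]$ and its complement setwise. So the invariant persists through all later diagrams, which gives (2) by induction. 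Deleting packed columns is harmless to the invariant.

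For part (3), suppose $m_b(D)\neq0$. The column $C$ in which the topmost missing tooth $i_b$ was chosen becomes packed after the swap. Since $i_b\notin C$ and $i_b+1\in C$, being packed after $s_{i_b}$ forces $C=[i_b-1]\cup\{i_b+1\}$, which becomes $[i_b]$. Similarly, any other column that becomes nonempty packed after the swap must equal $[i_b]$, because columns packed before the swap were already removed. Therefore $^{i_b(D)}D$ contains the packed column $[i_b]$, and the argument of (2), applied to $^{i_b(D)}D$, shows that no later index $i_j$ with $j>b$ equals $i_b$.

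For part (4), use the fact that $\omega_a$ is symmetric in $x_i,x_{i+1}$ whenever $i\neq a$. Then $\partial_i$ is linear over $s_i$-invariant polynomials, so $\overline{\pi}_i(\omega_a f)=\omega_a\overline{\pi}_i(f)$ for $i\neq a$. Starting from Definition~\ref{def:OrthodontiaPolynomial}, I move each factor inward:
\begin{itemize}
\item $\omega_a^{k_a}$ can pass every $\overline{\pi}_{i_j}$, since by (2) no $i_j$ equals $a$ when $k_a\neq0$;
\item $\omega_{i_b}^{m_b}$ can pass every $\overline{\pi}_{i_j}$ with $j>b$, by (3).
\end{itemize}
Moving all the $\omega$ factors inside the innermost operator yields the stated formula. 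The main obstacle is getting the invariant in (2) and (3) exactly right, in particular checking that the newly packed column produced at step $b$ is precisely $[i_b]$.
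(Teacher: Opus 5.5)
Your proof is correct. The paper gives no proof of this lemma; it is imported from \cite{setiabrata2024double} (Lemma 4.5 and Corollary 4.6), so your write-up is a self-contained substitute rather than a reconstruction of an argument in the text.

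The three ingredients you use fit together cleanly:
\begin{enumerate}
\item Reformulating ``column permutation of a skyline diagram'' as ``columns form a chain under inclusion'' makes (1) a short induction.
\item The invariant that every column is either contained in $[b]$ or contains $[b]$ gives (2). It is preserved by emptying columns and by $s_i$ for $i\neq b$, and it rules out a missing tooth in row $b$.
\item The $s_i$-invariance of $\omega_a$ for $i\neq a$ shows that (2) and (3) are exactly the commutations $\overline{\pi}_{i}(\omega_a f)=\omega_a\overline{\pi}_{i}(f)$ needed to slide every $\omega$ factor of Definition~\ref{def:OrthodontiaPolynomial} to the innermost position in (4).
\end{enumerate}
What this buys over the citation is transparency. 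It makes explicit that (4) is a formal consequence of (2) and (3), and that (2)--(3) only use comparability of all columns with a single initial segment $[c]$.

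One sentence in your part (3) is false and should be removed: the column $C$ containing the chosen missing tooth need not become packed after the swap, even when $m_b(D)\neq 0$. For example, suppose $D{\_}$ has leftmost column $\{1,3,4\}$ and another column $\{1,3\}$. Then $i_b=2$, and after swapping rows $2$ and $3$ the first column is $\{1,2,4\}$ (not packed) while the second is $[2]$.

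The argument survives because your next sentence does the real work. The hypothesis $m_b(D)\neq 0$ supplies some nonempty column $C'$ of $D{\_}$ with $s_{i_b}(C')=[a]$. Since $C'$ was not packed, $C'=s_{i_b}([a])\neq[a]$, and this forces $a=i_b$. So ${}^{i_b(D)}D$ does contain the column $[i_b]$, and your invariant from (2), started at ${}^{i_b(D)}D$, finishes the proof.
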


\begin{cor} \label{cor:ortho_permute}
    Suppose that $D$ is a column permutation of $D_{\text{Sky}}(\alpha)$ for some weak composition $\alpha$.  Then $\mathscr{G}_{D}=\mathscr{G}_{D_{\text{Sky}}(\alpha)}$.
\end{cor}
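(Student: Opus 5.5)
By Theorem~\ref{thm:LascOrthodontia}, $\mathfrak{L}_\alpha=\mathscr{G}_{D_{\text{Sky}}(\alpha)}$, so it suffices to show $\mathscr{G}_D=\mathscr{G}_{D_{\text{Sky}}(\alpha)}$ whenever $D$ is a column permutation of $D_{\text{Sky}}(\alpha)$. I will use the form of $\mathscr{G}_D$ in Lemma~\ref{lem:ortho_props}(4), namely a string of Demazure--Lascoux operators $\overline{\pi}_{i_1(D)}\cdots\overline{\pi}_{i_l(D)}$ applied to a monomial product of $\omega$'s. The plan is to show that both ingredients are invariant under permuting columns: the multiset of $\omega$-factors, and the operator string up to the commutation and braid relations of the $\overline{\pi}_i$.

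The first step is to decompose column by column. In a column permutation of a skyline diagram, every column $c$ is an initial segment $\{1,\dots,h\}$ of the rows of the underlying skyline diagram, with the rows reordered. Track a single column through the orthodontia. At each step a missing tooth of the leftmost nonempty column is removed by a row swap. Since swaps are applied globally, each column is eventually sorted upward to a packed column, at which point it contributes one factor $\omega_a$. That factor appears either as $\omega^{k_a}$ or as $\omega_{i_b}^{m_b}$, which are exactly the factors gathered in Lemma~\ref{lem:ortho_props}(4). I will show that the full collection of $\omega$ exponents equals the column-height multiset $\{\alpha$-induced column heights$\}$. This multiset is clearly independent of the column order, since by Lemma~\ref{lem:ortho_props}(1) the set of row indices occupied by each column stays an initial segment throughout.

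The second step, which I expect to be the main obstacle, is to show that the operator word $\overline{\pi}_{i_1}\cdots\overline{\pi}_{i_l}$ applied to the product of $\omega$'s gives the same result for $D$ as for $D_{\text{Sky}}(\alpha)$. The natural strategy is induction on the number of adjacent column transpositions needed to reach $D_{\text{Sky}}(\alpha)$ from $D$. It is enough to compare $D$ with $D'$, obtained by swapping two adjacent columns. The orthodontia processes columns from the left, so the words for $D$ and $D'$ should differ by interleaving the swap sequences that sort two different columns. Note that sorting a column amounts to a reduced word for the permutation restoring the original row order. Interleaving two such sorting processes should then correspond to two reduced words for the same permutation, which are related by commutation and braid moves. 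Matsumoto's theorem, together with the stated relations (1) and (2), would then give equality of the operators.

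Two points need care in this step. First, the operators must act on the same monomial, which the first step ensures after rewriting via Lemma~\ref{lem:ortho_props}(4). Second, parts (2) and (3) of Lemma~\ref{lem:ortho_props} must be used to guarantee that no $\overline{\pi}_i$ is applied twice in a way that breaks reducedness. If this becomes messy, I would fall back on the simpler observation that $\overline{\pi}_i$ fixes any $f$ symmetric in $x_i,x_{i+1}$, which is the case for $\omega_a$ with $a\ne i$. This would let me absorb extra operators. A cleaner alternative is to show that the total operator equals $\overline{\pi}_{u}$ for the single permutation $u$ that carries $D_{\text{Sky}}(\alpha)$'s sorted row order back to its actual row order. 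In that formulation $u$ depends only on $\alpha$, which gives $\mathscr{G}_D=\mathscr{G}_{D_{\text{Sky}}(\alpha)}$.
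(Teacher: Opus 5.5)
Your proposal follows the paper's proof. You reduce to swapping two adjacent columns, which are necessarily nested. You note that the collected monomial in Lemma~\ref{lem:ortho_props}(4) is $\prod_j \omega_{|D_j|}$ and hence unchanged. You then relate the two $\overline{\pi}$-words by the commutation and braid relations, a step the paper calls straightforward and which you make precise via Matsumoto's theorem.

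What makes your ``should'' hold is this: because the two columns are nested, sorting one column and then the image of the other gives the same net row permutation in either order, and the swap counts add up to its length. So the two differing middle segments are reduced words for the same permutation, while the prefix and suffix agree. Parts (2) and (3) of Lemma~\ref{lem:ortho_props} are only needed to justify form (4), not reducedness.
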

\begin{proof}
    Let $c$ be the leftmost column of $D$ such that column $c+1$ of $D$ contains more squares than column $c$.  Since $D$ is a column permutation of $D_{\text{Sky}}(\alpha)$, this implies that for all $r$ such that $(r,c)\in D$, $(r,c+1) \in D$.  Let $D'$ denote the diagram obtained by swapping columns $c$ and $c+1$ of $D$.  
    Observe that by definition, \[\omega_1^{k_1(D)}\cdots\omega_n^{k_n(D)}\omega_{i_1(D)}^{m_1(D)}\cdots\omega_{i_l(D)}^{m_l(D)}=\omega_1^{k_1(D')}\cdots\omega_n^{k_n(D')}\omega_{i_1(D')}^{m_1(D')}\cdots\omega_{i_{l'}(D')}^{m_{l'}(D')}.\]
    Using Lemma \ref{lem:ortho_props} and the commutation and braid relations satisfied by $\overline{\pi}_{i}$, it is then straightforward to show that $\mathscr{G}_{D}=\mathscr{G}_{D'}$.  By repeating this process, we see that $\mathscr{G}_{D}=\mathscr{G}_{D_{\text{Sky}}(\alpha)}$.
\end{proof}

\subsection{Bumpless Pipe Dreams}


A \textbf{bumpless pipe dream} (BPD) is a tiling of the $n \times n$ grid with the tiles  
\begin{center}
    \begin{tikzpicture}
\draw[gray, thin] (0,0) rectangle (.5,.5);
\draw[black, thick] (.25,0) -- (.25,.5);
\draw[black,thick] (0,.25) -- (.5,.25);
\draw[gray, thin] (1,0) rectangle (1.5,.5);
\draw[black, thick] (1.25,0) -- (1.25,.5);
\draw[gray, thin] (2,0) rectangle (2.5,.5);
\draw[black,thick] (2,.25) -- (2.5,.25);
\draw[gray, thin] (3,0) rectangle (3.5,.5);
\draw[black,thick] (3,.25) .. controls (3.25,.25) .. (3.25,.5);
\draw[gray, thin] (4,0) rectangle (4.5,.5);
\draw[black,thick] (4.25,0) .. controls (4.25,.25) .. (4.5,.25);
\draw[gray, thin] (5,0) rectangle (5.5,.5);
\end{tikzpicture}
\end{center} 
which forms a network of $n$ pipes, each running from the bottom to the right edge of the grid \cite{ls2021, weigandt2021}.  One can associate a permutation to each BPD by numbering the pipes $1$ to $n$ along the bottom edge of the grid and then reading off the labels on the right edge, ignoring any crossings after the first between each pair of pipes, i.e. replacing any redundant crossings with \scalebox{.6}{\begin{tikzpicture}
\draw[gray, thin] (0,0) rectangle (.5,.5);
\draw[black,thick] (0,.25) .. controls (.25,.25) .. (.25,.5);
\draw[black,thick] (.25,0) .. controls (.25,.25) .. (.5,.25);
\end{tikzpicture} } tiles.  A BPD is called \textbf{reduced} if every pair of pipes crosses at most once.  We denote the set of all BPDs associated to a permutation $w$ by $\text{BPD}(w)$ and the set of all reduced BPDs by $\text{RBPD}$.  

For a given $P \in \text{BPD}(w)$, we denote the set of blank tiles by $D(P)$ and the set of \scalebox{.6}{\begin{tikzpicture}
\draw[gray, thin] (0,0) rectangle (.5,.5);
\draw[black,thick] (0,.25) .. controls (.25,.25) .. (.25,.5);
\end{tikzpicture}} tiles by $U(P)$.  A \textbf{marked BPD} is a pair $(P,S)$ where $P$ is a BPD and $S\subset U(P)$.  We denote the set of marked BPDs associated to $w$ by $\text{MBPD}(w)$.  The \textbf{weight} of a marked BPD $(P,S)$ is the $n$-tuple $\text{wt}((P,S)):=(p_1,\ldots,p_n)$ where $p_i=|D(P) \cup S|$.  Let $\ell(w)$ denote the \textbf{length} of the permutation $w$, i.e. the total number of inversions in $w$.  
\begin{thm}[{\cite{weigandt2021}*{Corollary 1.5}}]
\label{thm:BPDGroth}
    For any $w\in S_n$,\[\mathfrak{G}_{w}=\sum_{(P,S) \in \text{MBPD}( w)}(-1)^{|D(P)| + |S| - \ell(w)}\mathbf{x}^{\text{wt}(P,S)}.\]
\end{thm}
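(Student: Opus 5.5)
The statement just before the paper breaks off is Theorem~\ref{thm:BPDGroth}, the marked bumpless pipe dream formula for $\mathfrak{G}_w$, which the paper quotes from \cite{weigandt2021}. Here is the plan I would follow to prove it from the divided-difference definition of $\mathfrak{G}_w$.

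Set
\[F_w:=\sum_{(P,S)\in\MBPD(w)}(-1)^{|D(P)|+|S|-\ell(w)}\mathbf{x}^{\wt(P,S)}.\]
The plan is to show that $F_w$ satisfies the same recursion as $\mathfrak{G}_w$, by downward induction on $\ell(w)$.

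\textbf{Base case.} For $w=w_0=n\,n-1\cdots 1$, I would check that $\text{BPD}(w_0)$ consists of a single diagram. Its blank tiles form the staircase $\{(i,j)\colon i+j\le n\}$, and it has no $U$-tiles. Hence $S=\emptyset$, the sign is $+1$, and the weight is $(n-1,n-2,\ldots,0)$, so $F_{w_0}=x_1^{n-1}\cdots x_{n-1}$.

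\textbf{Inductive step.} Take $w$ with $w(i)<w(i+1)$ and assume the formula for $ws_i$. I first rewrite the target as
\[\partial_i\bigl((1-x_{i+1})F_{ws_i}\bigr)=F_w.\]
To prove this, I group the marked BPDs of $ws_i$ and of $w$ according to the tiles they have outside rows $i$ and $i+1$. Since $\partial_i$ is linear over polynomials symmetric in $x_i,x_{i+1}$, it suffices to verify the identity class by class. Inside one class, rows $i$ and $i+1$ form a local two-row configuration, and the monomial factors as $x_i^ax_{i+1}^b$ times a factor that does not depend on the choices in those rows. The key step is then a two-row lemma. I would prove it using Weigandt's droop/undroop moves restricted to the two rows, or equivalently the $K$-theoretic transition-type bijection. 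The lemma should say that applying $\partial_i\bigl((1-x_{i+1})\,\cdot\,\bigr)$ to the generating function of the two-row configurations for $ws_i$ gives the generating function for $w$.

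Concretely, $\partial_i$ acting on $x_i^ax_{i+1}^b$ produces a signed geometric string of monomials. I would match each term of that string with a sliding of the blank tiles between the two rows. The factor $(1-x_{i+1})$ accounts for the possibility of marking or unmarking a $U$-tile at the point where the two pipes $w(i)$ and $w(i+1)$ meet; this introduces exactly one extra sign and one extra degree.

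\textbf{Main obstacle.} The hard part is the bookkeeping for non-reduced BPDs, where pipes cross redundantly. There the local moves must preserve the rule that only the first crossing of each pair counts, and the resulting cancellations have to be checked. If the direct two-row verification becomes unmanageable, my fallback is to prove the formula for the \emph{double} Grothendieck polynomial $\mathfrak{G}_w(x;y)$ instead. I would do this by induction through the $y$-variables, using the left divided difference in $y$. That route follows Weigandt's proof via Lascoux's column-moves, where the BPD recursion is cleaner. Setting $y=0$ then recovers the theorem.
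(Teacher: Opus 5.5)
The paper gives no proof of this statement; it imports it from Weigandt, so your plan has to stand on its own.

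The base case is correct. The reduction to classes with fixed tiles and marks outside rows $i,i+1$ is also sound, for two reasons:
\begin{itemize}
\item Each row of a BPD is determined by its sets of occupied bottom and top edges. So the completions of a class are indexed by the set $M$ of occupied edges between rows $i$ and $i+1$.
\item The first-crossing rule is local in the labels: a cross tile is a genuine crossing exactly when the pipe entering from the left carries the smaller label. So the rows above row $i$ only see the labels along its top edge.
\end{itemize}
The gap is that the entire content of the theorem then sits in your two-row lemma, which you only describe and never prove. Neither tool you name addresses it. Droop and $K$-theoretic droop moves preserve the permutation, so they never relate a configuration for $ws_i$ to one for $w$. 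Transition formulas recurse through transpositions $t_{ab}$, which is a different recursion from the $\partial_i$ step.

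Your description of the mechanism is also not right. First, $\partial_i\bigl((1-x_{i+1})f\bigr)=f+(1-x_i)\,\partial_i f$, which is not ``one marked $U$-tile, one extra sign and degree.'' Second, the pipes $w(i),w(i+1)$ need not meet inside rows $i,i+1$ at all. Take $n=4$, $i=1$, $ws_1=2143$, and fix the outside rows as follows:
\begin{itemize}
\item Row $4$: two vertical tiles, then pipe $3$ turns right at $(4,3)$ and crosses pipe $4$.
\item Row $3$: pipe $1$ turns right at $(3,1)$, crosses pipe $2$ at $(3,2)$ and turns up at $(3,3)$; then pipe $4$ turns right at $(3,4)$.
\end{itemize}
Pipes $1$ and $2$ have already crossed, so this class has no $1243$-completions, and your lemma demands $\partial_1\bigl((1-x_2)G\bigr)=0$ for the class sum $G$. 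The only completions are:
\begin{itemize}
\item $M=\{2\}$: reduced, with blanks $(1,1),(2,1)$;
\item $M=\{3\}$: a redundant crossing of pipes $1,2$ at $(2,3)$, with blanks $(1,1),(1,2),(2,1)$.
\end{itemize}
Together they give $G=x_1x_2(1-x_1)(1-x_3)$. The required symmetry comes from cancelling a reduced completion against a non-reduced one, with no $U$-tile in rows $1,2$. So the two-row lemma is precisely the non-reduced bookkeeping that you list as the main obstacle and leave open.

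To close the gap, you must state the identity for fixed labels along the bottom of row $i+1$ and the top of row $i$. It should be a statement about the two-row sums $\sum_M\prod_{\mathrm{blank}}(-x_r)\prod_{U}(1-x_r)$, with marks summed out and global sign $(-1)^{\ell}$. Then you must prove it. A natural candidate is a Yang--Baxter (train) argument for a colored vertex model with these weights, but that requires actually producing an $R$-matrix. Your fallback via $y$-divided differences is a pointer to a different proof, not an argument.
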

There are certain moves which can be performed on while preserving the associated permutation.  Specifically, a \textbf{droop move} is a move of the form 
\begin{center}
\scalebox{.7}{\begin{tikzpicture}
\draw[step=.5cm,gray,very thin] (0,0) grid (2.5,2);
\draw[step=.5cm,gray,very thin] (2.999,0) grid (5.5,2);
\draw[-stealth, black, thick] (2.6,1) -- (2.9,1);
\draw[black, thick] (.25,0) -- (.25,1.5);
\draw[black, thick] (.25,1.5) .. controls (.25,1.75) .. (.5,1.75);
\draw[black,thick] (.5,1.75) -- (2.5,1.75);
\draw[black,thick] (3.25,0) .. controls (3.25,.25) .. (3.5,.25);
\draw[black,thick] (3.5,.25) -- (5,.25);
\draw[black,thick] (5, .25) .. controls (5.25,.25) .. (5.25,.5);
\draw[black,thick] (5.25,.5)--(5.25,1.5);
\draw[black,thick] (5.25,1.5).. controls (5.25,1.75) .. (5.5,1.75);
\end{tikzpicture}}
\end{center}
where the rectangle shown contains no other elbow tiles.  A \textbf{K-theoretic droop move} is a move of the form 
\begin{center}
    \scalebox{.7}{\begin{tikzpicture}
\draw[step=.5cm,gray,very thin] (0,0) grid (2.5,2);
\draw[step=.5cm,gray,very thin] (2.999,0) grid (5.5,2);
\draw[-stealth, black, thick] (2.6,1) -- (2.9,1);
\draw[black, thick] (.25,0) -- (.25,1.5);
\draw[black, thick] (.25,1.5) .. controls (.25,1.75) .. (.5,1.75);
\draw[black,thick] (.5,1.75) -- (2.5,1.75);
\draw[black,thick] (1.25,0) .. controls (1.25,.25) .. (1.5,.25) -- (2,.25) .. controls (2.25,.25) .. (2.25,.5) -- (2.25,2);

\draw[black,thick] (3.25,0) .. controls (3.25,.25) .. (3.5,.25);
\draw[black,thick] (3.5,.25) -- (5,.25);
\draw[black,thick] (5, .25) .. controls (5.25,.25) .. (5.25,.5);
\draw[black,thick] (5.25,.5)--(5.25,2);
\draw[black,thick] (4.25,0) -- (4.25,1.5) .. controls (4.25,1.75) .. (4.5,1.75) -- (5.5,1.75);
\end{tikzpicture}} \hspace{.5cm} or \hspace{.5cm}
    \scalebox{.7}{\begin{tikzpicture}
\draw[step=.5cm,gray,very thin] (0,0) grid (2.5,2);
\draw[step=.5cm,gray,very thin] (2.999,0) grid (5.5,2);
\draw[-stealth, black, thick] (2.6,1) -- (2.9,1);
\draw[black, thick] (.25,0) -- (.25,1.5);
\draw[black, thick] (.25,1.5) .. controls (.25,1.75) .. (.5,1.75);
\draw[black,thick] (.5,1.75) -- (2.5,1.75);
\draw[black,thick] (0,.25) -- (2,.25) .. controls (2.25,.25) .. (2.25,.5) .. controls (2.25,.75) .. (2.5,.75);

\draw[black,thick] (3,.25) -- (5,.25);
\draw[black,thick] (5, .25) .. controls (5.25,.25) .. (5.25,.5);
\draw[black,thick] (5.25,.5)--(5.25,1.5);
\draw[black,thick] (5.25,1.5).. controls (5.25,1.75) .. (5.5,1.75);
\draw[black,thick] (3.25,0) -- (3.25,.5) .. controls (3.25,.75) .. (3.5,.75) -- (5.5,.75);
\end{tikzpicture}}
\end{center}
where again, the rectangle shown contains no other elbow tiles.  For any $w$, the set $\text{RBPD}(w)$ is connected by droop moves \cite{ls2021}.  The set $\text{BPD}(w)$ is connected by droop moves and K-theoretic droop moves \cite{weigandt2021}.

\section{Bubbling Diagrams}
\label{sec:BubblingDiagrams}
In this section, we provide the basic machinery for working with bubbling diagrams, as introduced in \cite{Bubbling}.  Note that our notation and exposition differs somewhat from \cite{Bubbling} in order to fit the more generalized setting of the present paper.
\begin{defnx}
    A \textbf{dead square diagram} is an ordered triple $\mathcal{D} := (D,F,A)$ where $D$ is a diagram, $F \subset D$, and $A \subset D \setminus F$ such that for any square $(i,j) \in F$, there exists some $(i',j) \in A$ with $i' < i$ and $(i'',j) \notin D \setminus F$ for any $i' < i'' < i$. We refer to squares in $F$ as \textbf{dead squares}, squares in $A$ as \textbf{distinguished live squares}, and squares in $D \setminus F$ as \textbf{live squares}.
\end{defnx}

\begin{defnx}[Bubbling move]
\label{defn:bubbling-move}
Let $\mathcal D = (D, F, A)$ be a dead square diagram. Suppose that $(i,j)$ is a live square and that $(i-1,j)$ is an empty square.

Then, a \textbf{bubbling move} at $(i,j)$ produces the dead square diagram $\mathcal D' = (D', F', A')$ where:
\begin{align*}
D' &\colonequals D\setminus(i,j)\cup(i-1,j)\\
F' &\colonequals F\\
A' &\colonequals \begin{cases} A &\textup{ if } (i,j) \notin A \\ A \setminus (i,j)\cup(i-1,j)&\textup{ if } (i,j) \in A \end{cases}.
\end{align*}
In other words, we ``bubble up'' a live square from $(i,j)$ to $(i-1,j)$.
\end{defnx}

\begin{defnx}[K-Bubbling move]
\label{defn:k-bubbling-move}
Let $\mathcal D = (D, F, A)$ be a dead square diagram. Suppose that $(i,j)$ is a distinguished live square and that $(i-1,j)$ is an empty square.

Then, a \textbf{K-bubbling move} at $(i,j)$ produces the dead square diagram $\mathcal D' = (D', F', A')$ where:
\begin{align*}
D' &\colonequals D\cup(i-1,j)\\
F' &\colonequals F \cup (i,j)\\
A' &\colonequals A \setminus (i,j)\cup(i-1,j).
\end{align*}
In other words, we bubble up a distinguished live square from $(i,j)$ to $(i-1,j)$, leaving a dead copy behind at $(i,j)$.
\end{defnx}


\begin{figure}
    \centering
    \scalebox{.7}{\begin{tikzpicture}
     \filldraw[draw=darkgray, color=green, opacity=.5] (.5,1) rectangle (1,1.5);
     \filldraw[draw=darkgray, color=green, opacity=.5] (3.5,1) rectangle (4,1.5);
     \filldraw[draw=darkgray, color=green, opacity=.5] (6.5,1) rectangle (7,1.5);
     \filldraw[draw=darkgray, color=gold] (1,1) rectangle (1.5,1.5);
     \filldraw[draw=darkgray, color=gold] (4,1.5) rectangle (4.5,2);
     \filldraw[draw=darkgray, color=gold] (7,1.5) rectangle (7.5,2);
     \filldraw[draw=darkgray, color=lightgray] (7,1) rectangle (7.5,1.5);
        \draw[step=.5cm,black,very thin] (0,0) grid (2,2);
        \draw[step=.5cm,black,very thin] (2.999,0) grid (5,2);
        \draw[step=.5cm,black,very thin] (5.999,0) grid (8,2);
    \end{tikzpicture}}
    \caption{A dead square diagram (left) along with the result of performing a bubbling move (center) and a K-bubbling move (right).  Distinguished live squares are shown in gold and dead squares in gray.}
    \label{fig:bubblingmoves}
\end{figure}
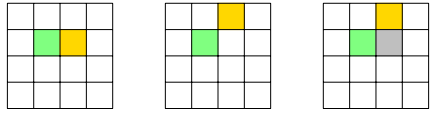

\begin{defnx}
    \label{def:SBD-general}
    Given a dead square diagram $\mathcal{D}=(D,F,A)$, a \textbf{streamlined bubbling diagram} with respect to $\mathcal{D}$ is any dead square diagram which can be obtained from $\mathcal{D}$ by a sequence of bubbling and K-bubbling moves.  We denote the set of streamlined bubbling diagrams by $\SBD(D,F,A)$.
\end{defnx}

The weight of a streamlined bubbling diagram $\mathcal{D}=(D,F,A)$ is $\wt(\mathcal{D}):=\wt(D)$.  The \textbf{excess} of $\mathcal D$ is the quantity $\text{ex}(\mathcal D):=|F|$. 

\begin{thm}[\cite{Bubbling}, Section 5]
    \label{thm:SBD_MConvex}
    Let $D$ be a diagram, and fix a set $A \subset D$ of distinguished live squares such that no column of $D$ contains more than one square in $A$.  Let $m=\max\{\text{ex}(\mathcal D):\mathcal{D} \in \SBD(D,\emptyset,A)\}$.  The set $\{z^{m-\text{ex}(\mathcal D)}\wt(\mathcal{D}):\mathcal{D}\in \SBD(D,\emptyset,A)\}$ is M-convex. 
\end{thm}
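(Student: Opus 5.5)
The plan is to reduce to a single column, settle that case by a direct combinatorial description, and then reassemble the columns by a Minkowski sum.

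\textbf{Reduction to one column.} Bubbling and K-bubbling moves act inside a single column and never look at other columns. So a streamlined bubbling diagram $\mathcal D\in\SBD(D,\emptyset,A)$ is the same thing as a choice, for each column $j$, of a streamlined bubbling diagram of the one-column dead square diagram $(D_j,\emptyset,A\cap D_j)$. Moreover $\wt(\mathcal D)=\sum_j \wt(\mathcal D_j)$ and $\text{ex}(\mathcal D)=\sum_j\text{ex}(\mathcal D_j)$. Hence $m=\sum_j m_j$, where $m_j$ is the maximal excess in column $j$. The homogenized set in the statement is therefore the Minkowski sum over $j$ of the sets
\[S_j=\{(\wt(\mathcal D_j),\, m_j-\text{ex}(\mathcal D_j))\colon \mathcal D_j\in\SBD(D_j,\emptyset,A\cap D_j)\}.\]
Since a Minkowski sum of M-convex sets is M-convex \cite{Murota}, it suffices to prove that each $S_j$ is M-convex.

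\textbf{One column.} Fix a column with squares in rows $r_1<\dots<r_k$. If $A$ meets it, let the distinguished square be $r_a$. If $A$ misses it, only ordinary bubbling is possible. In that case the reachable row sets are exactly the $k$-subsets $T\le\{r_1,\dots,r_k\}$, and the $z$-coordinate is constantly $0$. This is the classical "flagged" family, whose indicator vectors are the bases of a Schubert (nested) matroid, so the set is M-convex.

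When there is a distinguished square, I would first describe the reachable configurations explicitly. The squares above $r_a$ behave as before. The distinguished square moves up to a final row $d$, and the dead squares it leaves form an arbitrary subset $E$ of the rows it passed through. No live square can pass it or a dead square, because bubbling needs an empty cell directly above. The squares originally below $r_a$ can then bubble into vacated cells, but stay below the lowest dead square, or below the distinguished square if $E=\emptyset$.

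I would state this as a description of the reachable occupied sets $T$ with $|T|=k+e$, $e=|E|$, by inequalities of the form $|T\cap[1,t]|\ge \phi(t)$ and $|T\cap[1,t]|\le\psi(t)$, with $\phi,\psi$ read off from the $r_i$. Such a description makes the homogenized set (indicator of $T$, together with $z$-coordinate $m_j-e$) the set of integer points of a polytope cut out by prefix-sum constraints on a chain. Constraints of this kind give an M-convex set; alternatively, the set is the set of bases of a nested/lattice-path matroid on the ground set of rows plus $m_j$ parallel copies of the extra coordinate $z$. Failing a clean inequality description, I would instead verify the exchange axiom directly. Given $x,y\in S_j$ and $i$ with $x_i>y_i$, I would choose $j'$ as the nearest row in the appropriate direction where $y$ exceeds $x$, or the $z$-coordinate if the excesses differ, and show both exchanged configurations remain reachable by adjusting the position of the distinguished square and its dead trail.

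\textbf{Main obstacle.} The hard step is the exact characterization of reachable configurations in one column. In particular I must check that every $T$ satisfying the proposed prefix inequalities is actually realized by some order of moves. My approach is to construct a canonical move sequence: move the distinguished square first, K-bubbling at exactly the rows of $E$; then bubble the squares above it; then bubble the squares below it. Conversely, I would check that every reachable $T$ obeys the inequalities, by induction on the number of moves.
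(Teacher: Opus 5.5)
Your reduction is sound. Moves never interact across columns, and weight and excess are additive, so $m=\sum_j m_j$ and the homogenized set is the Minkowski sum of the $S_j$. Your description of the reachable configurations in one column is also right, with one slip: your canonical sequence must bubble the squares above $r_a$ first, since if $r_{a-1}=r_a-1$ the distinguished square is blocked until they move.

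The gap is the step that carries the theorem, namely M-convexity of $S_j$, and the mechanism you propose cannot work as stated. No constraints $\phi(t)\le|T\cap[1,t]|\le\psi(t)$ on the rows describe the reachable sets. Take a column with squares in rows $2$ and $4$, the one in row $2$ distinguished. The reachable row sets are $\{1,2\},\{1,3\},\{1,4\},\{2,3\},\{2,4\},\{1,2,3\},\{1,2,4\}$. The unreachable set $\{1,3,4\}$ has prefix counts $(1,1,2,3)$, which lie between those of $\{1,3\}$, namely $(1,1,2,2)$, and of $\{1,2,4\}$, namely $(1,2,2,3)$. If instead $\phi,\psi$ may depend on $e$, each excess layer is a Gale-type family. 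But layer-by-layer descriptions give no exchanges between layers, which is exactly where $z$ matters. Here, exchanging $\{1,2,4\}$ (with $z=0$) against $\{1,3\}$ (with $z=1$) at row $2$ is forced to use $z$, because $\{1,3,4\}$ is not reachable. Your fallback (``the nearest row in the appropriate direction'') is not an argument.

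The missing idea is how the excess enters the constraints. Write $T=\{s_1<\dots<s_{k+e}\}$. Then $T$ is reachable iff $s_\ell\le r_\ell$ for $\ell<a$, $s_{a+e}\le r_a$, and $s_{a+e+\ell}\le r_{a+\ell}$ for $1\le\ell\le k-a$ (take $d=s_a$ and $E=\{s_{a+1},\dots,s_{a+e}\}$). Set $\rho(t)=|\{r_1,\dots,r_k\}\cap[1,t]|$. The conditions then read: $|T\cap[1,t]|\ge\rho(t)$ for $t<r_a$, $|T\cap[t+1,n]|\le k-\rho(t)$ for $t\ge r_a$, and $|T|\ge k$. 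The prefixes, the suffixes, the whole row set and the singletons form a laminar family. So the indicator vectors form an $\mathrm{M}^\natural$-convex set (laminar convexity, \cite{Murota}), and appending $z=m_j-e$, where $m_j=r_a-a$, gives an M-convex set.

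Equivalently, for the indicator vector $y$ of $T$ the conditions are $y([1,t])\ge\rho(t)$ for $t<r_a$ and $y([1,t])+z\ge\rho(t)+m_j$ for $t\ge r_a$. This is a single chain condition, but only when $z$ is inserted into the chain immediately above row $r_a$, as a slot of capacity $m_j$. Likewise, in a matroid model the $m_j$ copies of $z$ must be interchangeable elements placed at that spot in the nested order. They cannot be parallel elements, since parallel elements never lie in a common basis.
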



In \cite{Bubbling}, Hafner, Mészáros, Setiabrata, and St.~Dizier show that for any vexillary permutation $w \in S_n$, these diagrams can be used to compute the support of the Grothendieck polynomial $\mathfrak{G}_w$ in the following way.


\begin{defnx}
Let $w \in S_n$ be vexillary, and let $D(w)$ be the Rothe diagram of $w$.  We say that two squares $(i,j), (i',j')\in D(w)$ are \textbf{linked} if $i - i' = r_{D(w)}(i,j) - r_{D(w)}(i',j')$. A \textbf{linking class} is an equivalence class of linked squares.
\end{defnx}

\begin{defnx}
    Let $w\in S_n$ be a vexillary permutation.  The \textbf{left bubbling order} on the squares in $D(w)$ is the total ordering given by $(i,j) \prec_L (i',j')$ if:
\begin{enumerate}
\item $i > i'$, or

\item $i = i'$ and $(i,j)$ has fewer squares below it than $(i',j')$, or

\item $i = i'$, $(i,j)$ has the same number of squares below it as $(i',j')$ does, and $j < j'$.
\end{enumerate}
\end{defnx}

\begin{defnx}
    Let $w\in S_n$ be a vexillary permutation.  The \textbf{right bubbling order} on the squares in $D(w)$ is the total ordering given by $(i,j) \prec_R (i',j')$ if:
\begin{enumerate}
\item $i > i'$, or

\item $i = i'$ and $(i,j)$ has fewer squares below it than $(i',j')$, or

\item $i = i'$, $(i,j)$ has the same number of squares below it as $(i',j')$ does, and $j > j'$.
\end{enumerate}
\end{defnx}

Using either of these orderings, we may construct an ordered set of distinguished live squares in $D(w)$ according to the following procedure.  This set will be denoted $A_L(w)$ when constructed from the left bubbling order and $A_R(w)$ when constructed from the right bubbling order.  

\begin{enumerate}
\item Add the first square in the left bubbling order (resp. right bubbling order) to $A_L(w)$ (resp. $A_R(w)$).

\item Each subsequent square in the left bubbling order (resp. right bubbling order) will be appended to $A_L(w)$ (resp. $A_R(w)$) if and only if $A_L(w)$ (resp. $A_R(w)$) does not already contain a square in the same column and $A_L(w)$ (resp. $A_R(w)$) does not already contain a square in the same linking class.
\end{enumerate}


\begin{thm}[\cite{Bubbling}, Theorem 4.12]
\label{thm:supp-Gw-SBD}
If $w\in S_n$ is vexillary, then \[\supp(\mathfrak G_w) = \{\wt(\mathcal D)\colon \mathcal D \in \SBD(D(w), \emptyset, A_L(w))\}.\]
\end{thm}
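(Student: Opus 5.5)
The plan is to go through bumpless pipe dreams and Theorem~\ref{thm:BPDGroth}. The first step is to note that there is no cancellation in $\mathfrak G_w$. In Theorem~\ref{thm:BPDGroth} the sign of a marked BPD $(P,S)$ is $(-1)^{|D(P)|+|S|-\ell(w)}$, and $|D(P)|+|S|$ is exactly the degree of $\mathbf x^{\wt(P,S)}$. So all terms of a fixed degree carry the same sign, and
\[\supp(\mathfrak G_w)=\{\wt(P,S)\colon (P,S)\in\MBPD(w)\}.\]
The theorem therefore reduces to the set equality
\[\{\wt(P,S)\colon (P,S)\in \MBPD(w)\}=\{\wt(\mathcal D)\colon \mathcal D\in\SBD(D(w),\emptyset,A_L(w))\}.\]

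For the inclusion $\supseteq$, I would build a marked BPD along any sequence of moves producing $\mathcal D$, by induction on the number of moves. The base case is the Rothe BPD $P_0$, whose blank tiles are exactly $D(w)$, with $S=\emptyset$. A bubbling move of a live square from $(i,j)$ to $(i-1,j)$ should correspond to a droop move that moves a blank tile up one row within its column. A K-bubbling move should correspond to a K-theoretic droop move, which vacates a blank tile upward. The left-behind dead square at $(i,j)$ would be recorded by adding the newly created $\scalebox{.5}{\text{upper-left elbow}}$ tile in row $i$ to the marked set $S$, so that row $i$ keeps its count. For this I need to track an invariant relating live and dead squares of $\mathcal D$ to $D(P)$ and $S$. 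The conditions defining $A_L(w)$ (at most one distinguished square per column and per linking class) should ensure that the required elbow configuration is present when the move is made. The vexillary hypothesis enters here: the rank function is constant along linking classes in the appropriate sense, so the pipes around a distinguished square have the shape needed for a (K-)droop.

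For the inclusion $\subseteq$, I would show that every marked BPD weight is attained by some streamlined bubbling diagram. I would first reduce to "canonical" marked BPDs. Using connectivity of $\text{BPD}(w)$ under droops and K-droops, together with the fact that marking an elbow only adds $e_i$ to the weight, every weight $\wt(P,S)$ should be obtainable as $\wt(C)+\sum_{r\in R}e_r$. Here $C\le D(w)$ records the blank tiles, with $C$ obtained by upward column moves, and $R$ is a multiset of rows subject to constraints coming from the pipe geometry. The bubbling moves already realize every $C\le D(w)$ reachable by moving squares up within columns. I would then argue that each extra $e_r$ can be produced by a K-bubbling of the distinguished square of the corresponding linking class, scheduled in the left bubbling order $\prec_L$.

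The main obstacle is this second inclusion. One has to show that the greedy choice $A_L(w)$ suffices: a single distinguished square per linking class, processed in the order $\prec_L$, must be able to generate every pattern of marked elbows occurring in a vexillary BPD. My first attempt would be induction on $\ell(w)$ or on the number of linking classes, peeling off the bottom row of $D(w)$, which is the first row in the order $\prec_L$. For vexillary $w$ the Rothe diagram becomes a (flagged) partition shape after a column permutation, and I would use this to match the elbows in each row with the linking classes bijectively.
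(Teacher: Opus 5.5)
The paper does not prove this statement; it quotes it from \cite{Bubbling}. Your outline therefore has to stand on its own, and it has a genuine gap.

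Your first step is correct. In Theorem~\ref{thm:BPDGroth} the sign depends only on the degree $|D(P)|+|S|$ of $\mathbf{x}^{\wt(P,S)}$, so there is no cancellation, and $\supp(\mathfrak G_w)$ is exactly the set of marked-BPD weights.

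After that, the move dictionary you propose for $\supseteq$ is wrong. A droop from $(a,b)$ into $(c,d)$ turns the blank at the southeast corner $(c,d)$ into an upper-left elbow and makes the northwest corner $(a,b)$ blank, where $b<d$. So blank tiles change columns and can rise several rows at once. Already for $w=132$, $D(w)=\{(2,2)\}$ bubbles to $\{(1,2)\}$. But no BPD of any permutation has blank set $\{(1,2)\}$, because the blanks in row $1$ always form an initial segment; the other BPD of $132$ has blank set $\{(1,1)\}$. Hence no invariant identifying live squares with blank tiles can be carried along a sequence of moves. At best the weights agree, and proving that is the theorem itself.

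The K-bubbling step is also inconsistent. A K-theoretic droop, as pictured in Section~\ref{sec:BG}, already raises $|D(P)|$ by one. It also creates no new upper-left elbow: the rectangle has one at its southeast corner both before and after the move. So there is no new tile to mark, and marking an extra elbow would overshoot the degree.

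The more serious gap is that your outline never explains how linking classes enter, and they carry the whole content of the statement. The condition ``at most one distinguished square per linking class'' does not make elbow configurations available. Its job is to keep the weights of $\SBD(D(w),\emptyset,A_L(w))$ inside the support. For $w=1423$, $D(w)=\{(2,2),(2,3)\}$ is a single linking class, and
\[\mathfrak G_{1423}=x_1^2+x_1x_2+x_2^2-x_1^2x_2-x_1x_2^2.\]
If both squares were distinguished, K-bubbling both would give weight $(2,2,0,0)$, which is not in the support.

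For $\subseteq$, your normal form $\wt(P,S)=\wt(C)+\sum_{r\in R}e_r$ with $C\le D(w)$ is asserted, not derived. Streamlined bubbling diagram weights are also not of the form ``any $\wt(C)$ plus freely chosen extra rows.'' A K-bubbling in column $j$ adds a square just above the current position of that column's distinguished square, and this is constrained by the live squares in the column. Claiming that the greedy set $A_L(w)$, taken in the order $\prec_L$, produces every pattern of extra blanks and marked elbows is a restatement of the theorem.

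What is missing is a cancellation-free model of vexillary $\mathfrak G_w$ in which the extra degree is visibly attached to linking classes. Two candidates are flagged set-valued tableaux of shape $\lambda(w)$, the sorted code of $w$ (Knutson--Miller--Yong), whose rows correspond to the linking classes, or a divided-difference recursion starting from $\mathfrak G_w=\mathfrak L_\alpha$ (Lemma~\ref{lem:Vex-to-Las}). General droop and K-droop moves on BPDs are a poor fit, because the quantity $i-r_{D(w)}(i,j)$ that defines linking classes has no evident meaning there.
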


\begin{remx}
    Note that in \cite{Bubbling}, $A_L(w)$ is simply denoted $A(w)$, and the set of streamlined bubbling diagrams $\SBD(D(w), \emptyset, A_L(w))$ is denoted $\SBD(w)$.  
\end{remx}

The same arguments as in \cite{Bubbling} can also be applied to show the following.

\begin{thm}
\label{thm:supp-Gw-SBD-right}
If $w\in S_n$ is vexillary, then \[\supp(\mathfrak G_w) = \{\wt(\mathcal D)\colon \mathcal D \in \SBD(D(w), \emptyset,A_R(w))\}.\]
\end{thm}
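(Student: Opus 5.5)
Since this theorem is the mirror image of Theorem~\ref{thm:supp-Gw-SBD}, I will run the argument of \cite{Bubbling} again with the right bubbling order in place of the left one. I will check that every step of that argument uses only properties of the order that $\prec_L$ and $\prec_R$ share. The two orders agree on comparisons between rows and on the count of squares below; they differ only in how ties within a row are broken by column index. So I first list what \cite{Bubbling} actually needs from $A_L(w)$:
\begin{itemize}
\item[(a)] $A_L(w)$ contains at most one square per column and one square per linking class;
\item[(b)] every linking class meeting a column that contains a square of $A_L(w)$ is ``represented'' in the appropriate sense;
\item[(c)] the cardinality and row distribution of $A_L(w)$ are what the degree bound requires.
\end{itemize}
I expect $A_R(w)$ to satisfy the same properties, because ties occur only within a single row among squares with the same number of squares below them.

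The proof then has two inclusions. For $\supseteq$, I will show that each $\mathcal D\in\SBD(D(w),\emptyset,A_R(w))$ has weight in $\supp(\mathfrak G_w)$. Following \cite{Bubbling}, the idea is to build a marked bumpless pipe dream $(P,S)$ with $\wt(P,S)=\wt(\mathcal D)$, using Theorem~\ref{thm:BPDGroth} together with a positivity argument (all terms of a fixed degree have the same sign) so that no cancellation can occur. Ordinary bubbling moves correspond to droop moves. A K-bubbling move at a distinguished square corresponds to a K-theoretic droop move, or to marking an elbow in $U(P)$. The distinguished square in a column plays the role of the pipe that can be drooped repeatedly, and its linking class determines which pipe that is. In this correspondence only the column and the linking class of each distinguished square are used, so the choice between $\prec_L$ and $\prec_R$ should not matter.

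For $\subseteq$, I will show that every element of $\supp(\mathfrak G_w)$ arises from some $\mathcal D$. Rather than redoing the whole argument, I would take a shortcut: show that the set of weights is the same for $A_L(w)$ and $A_R(w)$. Inside one row, the two procedures choose among squares with the same number of squares below them, differing only left-to-right. Since $w$ is vexillary, its Rothe diagram has nested column structure: squares in the same row with the same number below them lie in columns whose lower portions are identical. So there should be a column-swapping bijection between $\SBD(D(w),\emptyset,A_L(w))$ and $\SBD(D(w),\emptyset,A_R(w))$ that preserves weights. With such a bijection, Theorem~\ref{thm:supp-Gw-SBD} gives the result immediately.

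The main obstacle is justifying this swap. A column chosen earlier can block another square of the same linking class, so swapping the tie-break may change which columns receive distinguished squares, and not merely permute them. First I would check, using the rank function and the vexillary (\%-avoiding, nested) structure, that tied squares in one row belong to distinct linking classes or to columns that are identical below that row. If that check fails, I would fall back on verifying properties (a)--(c) directly for $A_R(w)$ and rerunning both inclusions of \cite{Bubbling} verbatim.
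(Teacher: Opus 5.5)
Your fallback, rerunning the argument of \cite{Bubbling} with $\prec_R$ in place of $\prec_L$, is exactly the paper's route: the paper only asserts that those arguments go through. Your shortcut is a genuinely different route, and the better one. A weight-preserving bijection $\SBD(D(w),\emptyset,A_L(w))\to\SBD(D(w),\emptyset,A_R(w))$ gives \emph{both} inclusions directly from Theorem~\ref{thm:supp-Gw-SBD}. The marked-BPD construction for $\supseteq$ then becomes unnecessary, and so does any audit of \cite{Bubbling} against your rather vague items (a)--(c).

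However, the check you propose would not justify the bijection. Tied squares in a row \emph{always} lie in columns that agree below that row, since the columns of a vexillary Rothe diagram are totally ordered by inclusion; you observe this yourself. So your condition is vacuous. It also does not license a column swap: bubbling and K-bubbling moves travel upward, and such columns can differ above row $i$.

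The right observation is that the tie-break only matters among tied squares in the same linking class, which for two squares of one row means equal rank $r_{D(w)}$. Such squares lie in \emph{identical} columns.
\begin{itemize}
\item Suppose $(i,j_1),(i,j_2)\in D(w)$ with $j_1<j_2$ have equal rank. Then no $k<i$ has $j_1<w(k)<j_2$. So for $i'<i$ we get $(i',j_1)\in D(w)\iff w(i')>j_1\iff w(i')>j_2\iff (i',j_2)\in D(w)$. The tie together with nestedness handles the rows below $i$.
\item Conversely, identical columns have equal ranks in every row they share.
\end{itemize}
Hence each tie group in a row splits into blocks that are precisely the maximal families $E$ of identical columns. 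Different blocks use disjoint columns and different linking classes, so they do not interact.

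Now induct along the processing order: rows from bottom to top, and within a row, tie groups by number of squares below. This order is the same for $\prec_L$ and $\prec_R$. At block $E$ in row $i$, both procedures add a square iff its linking class is unused and some column of $E$ is unused. They differ only in which free column of $E$ they take ($\prec_L$ the leftmost, $\prec_R$ the rightmost). So the invariant is preserved: the set of used linking classes, and the number of used columns in each $E$, agree between the two procedures.

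Consequently $A_R(w)=\sigma(A_L(w))$ for a permutation $\sigma$ of identical columns. Then $\sigma(D(w))=D(w)$. Since the moves are column-local and weights count squares per row, $\sigma$ gives the desired weight-preserving bijection. This resolves your ``main obstacle'': the tie-break never changes which linking classes are used, or how many columns of each identical family are used.
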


\begin{defnx}
\label{defn:Dtop}
    Given a diagram $D$, fix $A \subset D$ to be an arbitrary set of distinguished live squares such that no column of $D$ contains more than one square of $A$.  We define
    \[D_{\text{top}}(D,A):= D\cup\{(i,j):(i',j)\in A \text{ for some } i<i'\}.\]
For any vexillary $w\in S_n$, define $D_{\text{top}}(w):=D_{\text{top}}(D(w),A_L(w))$.
\end{defnx}

\begin{figure}
    \centering
    \scalebox{.7}{\begin{tikzpicture}
    \filldraw[draw=darkgray, color=green, opacity=.5] (.5,2.5) rectangle (2.5,3);
    \filldraw[draw=darkgray, color=green, opacity=.5] (1,1) rectangle (1.5,2);
    \filldraw[draw=darkgray, color=gold] (2,1) rectangle (2.5,1.5);
    \filldraw[draw=darkgray, color=gold] (1,1) rectangle (1.5,1.5);
    \filldraw[draw=darkgray, color=gold] (.5,2.5) rectangle (1,3);
        \draw[step=.5cm,black,very thin] (0,0) grid (3.5,3.5);
        \node at (1.25,1.25) {1};
        \node at (2.25,1.25) {2};
        \node at (1.25,1.75) {3};
        \node at (.75,2.75) {4};
        \node at (1.25,2.75) {7};
        \node at (1.75,2.75) {5};
        \node at (2.25,2.75) {6};
        \begin{scope}[xshift=4.5cm]
        \filldraw[draw=darkgray, color=green, opacity=.5] (.5,2.5) rectangle (2.5,3);
    \filldraw[draw=darkgray, color=green, opacity=.5] (1,1) rectangle (1.5,2);
    \filldraw[draw=darkgray, color=gold] (2,1) rectangle (2.5,1.5);
    \filldraw[draw=darkgray, color=gold] (1,1) rectangle (1.5,1.5);
    \filldraw[draw=darkgray, color=gold] (1.5,2.5) rectangle (2,3);
    \node at (1.25,1.25) {2};
        \node at (2.25,1.25) {1};
        \node at (1.25,1.75) {3};
        \node at (.75,2.75) {5};
        \node at (1.25,2.75) {7};
        \node at (1.75,2.75) {4};
        \node at (2.25,2.75) {6};
            \draw[step=.5cm,black,very thin] (0,0) grid (3.5,3.5);
        \end{scope}
        \begin{scope}[yshift=-4cm]
        \filldraw[draw=darkgray, color=green, opacity=.5] (.5,2.5) rectangle (2.5,3);
    \filldraw[draw=darkgray, color=green, opacity=.5] (1,1) rectangle (1.5,2);
    \filldraw[draw=darkgray, color=green, opacity=.5] (2,3) rectangle (2.5,3.5);
    \filldraw[draw=darkgray, color=gold] (2,2.5) rectangle (2.5,3);
    \filldraw[draw=darkgray, color=lightgray] (2,1) rectangle (2.5,2.5);
    \filldraw[draw=darkgray, color=green, opacity=.5] (1,3) rectangle (1.5,3.5);
    \filldraw[draw=darkgray, color=gold] (1,2) rectangle (1.5,2.5);
    \filldraw[draw=darkgray, color=lightgray] (1,1) rectangle (1.5,2);
    \filldraw[draw=darkgray, color=lightgray] (.5,2.5) rectangle (1,3);
    \filldraw[draw=darkgray, color=gold] (.5,3) rectangle (1,3.5);
            \draw[step=.5cm,black,very thin] (0,0) grid (3.5,3.5);
            \begin{scope}[xshift=4.5cm]
            \filldraw[draw=darkgray, color=green, opacity=.5] (.5,2.5) rectangle (2.5,3);
    \filldraw[draw=darkgray, color=green, opacity=.5] (1,1) rectangle (1.5,2);
    \filldraw[draw=darkgray, color=green, opacity=.5] (2,3) rectangle (2.5,3.5);
    \filldraw[draw=darkgray, color=gold] (2,2.5) rectangle (2.5,3);
    \filldraw[draw=darkgray, color=lightgray] (2,1) rectangle (2.5,2.5);
    \filldraw[draw=darkgray, color=green, opacity=.5] (1,3) rectangle (1.5,3.5);
    \filldraw[draw=darkgray, color=gold] (1,2) rectangle (1.5,2.5);
    \filldraw[draw=darkgray, color=lightgray] (1,1) rectangle (1.5,2);
    \filldraw[draw=darkgray, color=lightgray] (1.5,2.5) rectangle (2,3);
    \filldraw[draw=darkgray, color=gold] (1.5,3) rectangle (2,3.5);
            \draw[step=.5cm,black,very thin] (0,0) grid (3.5,3.5);
        \end{scope}
        \end{scope}
    \end{tikzpicture}}
    \caption{Top left: $D(w)$ for $w=1624735 $ showing the left bubbling order and $A_L(w)$. Top right: $D(w)$ showing the right bubbling order and $A_R(w)$.  Bottom left: $D_{\text{top}}(w)$.  Bottom right: $D_{\text{top}}(D(w),A_R(w))$.}
    \label{fig:bubbling-order-Dtop1}
\end{figure}
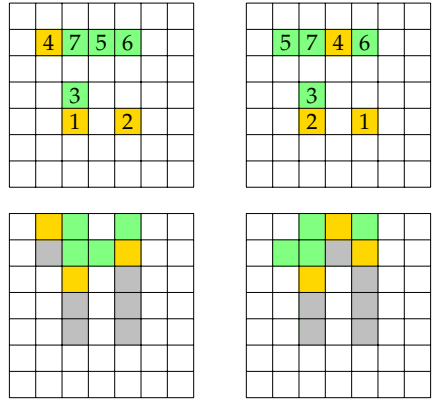

\begin{remx}
    \label{rem:LeftRightDTop}
    The diagrams $D_{\text{top}}(w)$ and $D_{\text{top}}(D(w),A_R(w))$ differ only by a permutation of their columns.  See Figure \ref{fig:bubbling-order-Dtop1} for an example.
\end{remx}

\begin{thm}[\cite{Bubbling}, Theorem 1.2]
    \label{thm:DTopGrothSupp}
    For vexillary $w \in S_n$, $\text{supp}(\hat{\mathfrak{G}}_w)=\text{supp}(\chi_{D_{\text{top}}(w)})$.  
\end{thm}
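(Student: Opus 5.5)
The plan is to obtain this statement from Theorem~\ref{thm:supp-Gw-SBD} together with Proposition~\ref{prop:support-of-chi}. The idea is to isolate the streamlined bubbling diagrams of maximal size and compare them column by column with $D_{\text{top}}(w)$.

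\emph{Reduction to maximal excess.} For $\mathcal D=(D',F,A')\in\SBD(D(w),\emptyset,A_L(w))$ we have $|\wt(\mathcal D)|=|D(w)|+\text{ex}(\mathcal D)$. This is because a bubbling move preserves the number of squares, while a K-bubbling move adds one square and one dead square. Homogeneous components of $\mathfrak G_w$ have disjoint supports, since they live in different degrees. Theorem~\ref{thm:supp-Gw-SBD} therefore gives
\[\supp(\hat{\mathfrak G}_w)=\{\wt(\mathcal D)\colon \mathcal D\in\SBD(D(w),\emptyset,A_L(w)),\ \text{ex}(\mathcal D)=m\},\]
where $m$ is the maximal excess. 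By Proposition~\ref{prop:support-of-chi}, the right-hand side of the theorem is $\{\wt(C)\colon C\le D_{\text{top}}(w)\}$. It thus suffices to show that the column sets of maximal-excess streamlined bubbling diagrams are exactly the diagrams $C$ with $C\le D_{\text{top}}(w)$; equal column sets give equal weights.

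\emph{Column independence.} Both kinds of moves act within a single column and depend only on that column. Also, $A_L(w)$ has at most one square per column. Hence the set of reachable configurations is a product over columns $j$ of the reachable configurations of column $j$. The excess is additive over columns, so maximal total excess means maximal excess in every column simultaneously. The condition $C\le D_{\text{top}}(w)$ is also columnwise. This reduces everything to a single column $j$ with column set $S=D(w)_j$ and at most one distinguished square $a=(i',j)$.

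If the column has no distinguished square, it admits only bubbling moves. The reachable sets are then exactly the sets $T$ with $T\le S$, and $D_{\text{top}}(w)_j=S$, so we are done. Otherwise, let $e$ be the number of empty cells above row $i'$ in column $j$.

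\emph{Upper bound.} The distinguished square only moves up, and each K-bubble consumes one empty cell strictly above its current position. Live squares never descend. Hence the excess in the column is at most $e$.

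\emph{Attaining the bound.} Bubble every live square above $a$ to the top, then K-bubble $a$ upward $e$ times. The rows $1,\dots,i'$ become fully occupied, so the excess equals $e$ and the resulting column is exactly $D_{\text{top}}(w)_j$.

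\emph{Characterizing maximal-excess columns.} In any maximal-excess configuration the occupied set $T$ has $|T|=|D_{\text{top}}(w)_j|$. Squares below $a$ only bubble upward, so $T$ is dominated, in the $\le$ order, by $D_{\text{top}}(w)_j$. Conversely, take any $T\le D_{\text{top}}(w)_j$. Starting from the configuration above, we must realize $T$ by further bubbling the live squares below the dead ones. For this I would order moves carefully: first bubble the squares originally below $a$ as far as $T$ requires, then perform the K-bubbles. The aim is that the dead copies land exactly in the cells of $T$ that are not covered by live squares.

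This last step is where I expect the main obstacle. The dead squares are frozen, so the positions at which $a$ begins K-bubbling must be chosen to match $T$. I would handle it by induction on $i'$ together with the number of squares below $a$, using the greedy matching of $k$-th smallest elements in the definition of $\le$. It may also need the vexillary structure of $D(w)$, namely that the choice of $A_L(w)$ guarantees that no live square sits between $a$ and the cells it must fill. If a direct argument fails, I would instead invoke M-convexity from Theorem~\ref{thm:SBD_MConvex} to fill in the intermediate weights.
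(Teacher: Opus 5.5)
The paper quotes this result from \cite{Bubbling} without proof, so there is no in-paper argument to compare against. Your framework for deriving it is sound. The reduction to maximal excess via Theorem~\ref{thm:supp-Gw-SBD}, the column-by-column product structure, the bound $e$ on the excess in a column with distinguished square $a=(i',j)$, and its attainment are all correct.

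\textbf{The gap is the converse inclusion.} You never show that every $T\le D_{\text{top}}(w)_j$ is the column of some maximal-excess configuration; you only list possible remedies. Those remedies target a difficulty that does not exist. The M-convexity fallback would not close the gap in any case. Knowing that the top-degree SBD weights form an M-convex set containing $\wt(D_{\text{top}}(w))$ and contained in $\{\wt(C)\colon C\le D_{\text{top}}(w)\}$ does not force the two sets to be equal.

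\textbf{The missing observation is that $\le$ is rigid here.} Write $S=D(w)_j$. Then $D_{\text{top}}(w)_j=\{1,\dots,i'\}\cup(S\cap(i',n])$, and its $k$-th smallest element is $k$ for every $k\le i'$. Hence $T\le D_{\text{top}}(w)_j$ holds if and only if $T=\{1,\dots,i'\}\cup T'$ with $T'\subseteq(i',n]$ and $T'\le S\cap(i',n]$. So there are no gaps in rows $1,\dots,i'$ into which dead squares must be steered.

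To finish the converse, start from your extremal configuration. There rows $1,\dots,i'$ are filled, the dead copies sit in rows $i'-e+1,\dots,i'$, and the rows below $i'$ are untouched. Now bubble the live squares below row $i'$ up to $T'$, exactly as in a column with no distinguished square.

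The same rigidity is what your forward direction actually needs; ``squares below $a$ only bubble upward'' alone does not give it. Equality in your counting argument forces every one of the $e$ moves of $a$ to be a K-bubble and leaves no empty cell above $a$. So rows $1,\dots,i'$ are full in every maximal-excess column. Meanwhile the squares originally below $a$ never pass row $i'$, which is occupied first by $a$ and then by its dead copy, and this gives $T'\le S\cap(i',n]$. No vexillary input is needed beyond Theorem~\ref{thm:supp-Gw-SBD} itself.
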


Furthermore, Hafner, Mészáros, Setiabrata, and St.~Dizier \cite{Bubbling} conjectured that for any vexillary permutation $w$, $\hat{\mathfrak{G}}_w$ equals an integer multiple of $\chi_{D_{\text{top}}(w)}$.  We prove this conjecture in Theorem \ref{thm:TopVexChar}.

Another family of permutations for which $\text{Supp}(\mathfrak{G}_w)$ can be computed by streamlined bubbling diagrams is fireworks permutations.  Given a diagram $D$, the \textbf{upwards closure} is the diagram $\overline{D}:=\{(i,j):(i',j)\in D \text{ for some }i'>i\}$.  Mészáros, Setiabrata, and St.~Dizier proved a characterization of $\text{Supp}(\hat{\mathfrak{G}}_w)$ for fireworks permutations.
\begin{thm}[\cite{mssd2022}, Theorem~3.14]
    \label{thm:firetop}
    For any fireworks $w\in S_n$, $\text{supp}(\hat{\mathfrak{G}}_w)=\{\wt(\overline{D(w)})\}.$
\end{thm}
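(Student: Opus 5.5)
The theorem says that for fireworks $w$ the top component $\hat{\mathfrak{G}}_w$ is a single monomial (up to scalar), namely $x^{\wt(\overline{D(w)})}$. The plan is to prove this with the marked bumpless pipe dream expansion of Theorem~\ref{thm:BPDGroth}. It suffices to show two things:
\begin{enumerate}
\item there is a marked BPD $(P,S)\in\MBPD(w)$ with $\wt(P,S)=\wt(\overline{D(w)})$;
\item every $(P,S)\in\MBPD(w)$ satisfies the rowwise bound $\wt(P,S)_i\le \wt(\overline{D(w)})_i$ for all $i$.
\end{enumerate}
Given both, $\deg\mathfrak G_w=|\overline{D(w)}|$. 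Any marked BPD of this degree must attain equality in every row, so all top-degree terms carry the same monomial. All of them also carry the same sign $(-1)^{|\overline{D(w)}|-\ell(w)}$, so there is no cancellation, and $\supp(\hat{\mathfrak G}_w)=\{\wt(\overline{D(w)})\}$.

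\emph{Step 1 (construction).} Start from the Rothe BPD $P_0$, whose blank tiles are exactly $D(w)$. For each column $j$ of $D(w)$, the cells of $\overline{D(w)}$ in column $j$ above the lowest square of $D(w)$ in that column must be made to count. I would do this by applying K-theoretic droop moves, and ordinary droops where needed, column by column from left to right. The aim is to push blanks or $\lrcorner$-elbows into every cell of $\overline{D(w)}\setminus D(w)$, and then to mark all resulting $\lrcorner$-elbows in $S$.

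The fireworks hypothesis should enter here. Because the initial terms of the decreasing runs increase, the pipes that must be rerouted in column $j$ form decreasing runs. I expect the relevant rectangles to be free of other elbows, so that each move is legal. I would check this by describing the final BPD explicitly: it should be a ``greedy'' BPD in which each decreasing run is drawn as a staircase of elbows, and I would verify directly that it has permutation $w$.

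\emph{Step 2 (upper bound).} Fix a row $i$. The tiles of row $i$ that contribute to $\wt(P,S)_i$ are blanks and $\lrcorner$-elbows. Every other tile in row $i$ carries a vertical pipe segment entering from below without turning, or is a $\ulcorner$-elbow. I would count, for each column $j$, whether some pipe must pass vertically through $(i,j)$. If column $j$ has no square of $D(w)$ weakly below row $i$, I would use a rank argument to show that a vertical segment or a $\ulcorner$ is forced at $(i,j)$ in every BPD of $w$. The rank argument is that the number of pipes exiting rows $\le i$ with labels $\le j$ is fixed by $w$ and is preserved under droops and K-droops. Summing over $j$ gives the bound.

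I expect Step 2 to be the main obstacle. The rank argument naturally bounds counts over southwest rectangles, not over single cells. Getting a cellwise statement seems to require the fireworks condition: it should rule out a column with no $D(w)$-square below row $i$ still admitting a blank at $(i,j)$ after some K-droop.

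If the cellwise bound fails, my fallback is a degree-only bound. I would prove $\deg \mathfrak G_w=|\overline{D(w)}|$ via the Rajchgot code, which for fireworks permutations should coincide column-by-column with the upward closure. I would then argue that the max-degree MBPDs form a single class under moves that preserve row weights.
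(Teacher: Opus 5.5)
The paper does not prove this statement; it quotes it from \cite{mssd2022}. Your overall frame is sound. In Theorem~\ref{thm:BPDGroth} every term of degree $k$ carries the sign $(-1)^{k-\ell(w)}$, so a rowwise bound $\wt(P,S)\le\wt(\overline{D(w)})$ for all marked BPDs, plus one marked BPD attaining it, would prove the theorem.

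\textbf{The gap.} Both of your steps rest on a cell-by-cell picture that is false for bumpless pipe dreams: blank and $\lrcorner$ tiles are not confined to the columns of $\overline{D(w)}$. Take the fireworks permutation $w=132$. Here $D(w)=\{(2,2)\}$ and $\overline{D(w)}=\{(1,2),(2,2)\}$, reading the definition of $\overline{D}$ with $i'\ge i$ as intended. Its only BPDs are the Rothe BPD, with a blank at $(2,2)$, and its droop, with a blank at $(1,1)$, a $\lrcorner$ at $(2,2)$ and a $\ulcorner$ at $(1,2)$. As a check, $x_2+x_1(1-x_2)=\mathfrak G_{132}$.
\begin{itemize}
\item The forcing claim of Step~2 fails: $(1,1)$ is blank although column $1$ of $D(w)$ is empty. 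It fails under an \emph{ordinary} droop, which always moves a blank from the southeast corner to the northwest corner of its rectangle, hence into another column. The fireworks hypothesis cannot repair this.
\item The target of Step~1 is unattainable: no BPD of $132$ has a blank or $\lrcorner$ at $(1,2)$, the only cell of $\overline{D(w)}\setminus D(w)$. The top weight $(1,1)$ is realized by the blank at $(1,1)$ together with the marked $\lrcorner$ at $(2,2)$.
\end{itemize}
The theorem is about row counts, not tile locations, and your proposal contains no argument for the row counts.

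\textbf{The fallback does not close the gap.} Combine the degree formula of \cite{psw2021} with an identity $\mathrm{rajcode}(w)=\wt(\overline{D(w)})$; this is a comparison of row vectors, not columns, and you would still have to prove it. That gives at most $\wt(\overline{D(w)})\in\supp(\hat{\mathfrak G}_w)$. Uniqueness of the top monomial is the whole content of the theorem, and you specify no weight-preserving moves. Such an argument must use the fireworks condition essentially: for $w=1423$, which is not fireworks, $\hat{\mathfrak G}_w=-(x_1^2x_2+x_1x_2^2)$.

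\textbf{A row-local reformulation.} If you want to stay with BPDs, aim for a bound per row. Exactly $i$ pipes cross the lower edge of row $i$ and $i-1$ cross its upper edge, so row $i$ has one more $\ulcorner$ than $\lrcorner$. Hence the number of blank plus $\lrcorner$ tiles in row $i$ equals $n-i$ minus the number of purely horizontal tiles there; these are tiles your Step~2 list omits. Step~2 is therefore equivalent to a lower bound on horizontal tiles in each row of every BPD of a fireworks $w$, and that is what you would have to prove.

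The column confinement you are reaching for does hold in the bubbling model of Theorem~\ref{thm:fireMconv}, where squares only move up within their own column. There every column of a streamlined bubbling diagram lies inside the corresponding column of $\overline{D(w)}$, and filling each column realizes $\overline{D(w)}$. So the statement follows immediately from that later theorem, but the same reasoning does not transfer to BPDs.
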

In particular, this implies that for any firewroks $w$, $\Newton(\mathfrak{G}_w)$ is the schubitope $S_{\overline{D(w)}}$.  
Furthermore, Chou and Setiabrata proved the following.
\begin{thm}[\cite{chou2025newton}, Theorem~1.1, Corollary~1.2]
    \label{thm:fireMconv}
    For any fireworks $w\in S_n$, \[\supp(\mathfrak G_w) = \{\wt(\mathcal D)\colon \mathcal D \in \SBD(D(w), \emptyset,A\}\] where $A$ consists of the southmost square in each column of $D(w)$.  In particular, $\text{Supp}(\widetilde{{\mathfrak{G}}}_w)$ is M-convex.
\end{thm}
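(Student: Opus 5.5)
The plan is to prove the equality of supports first. M-convexity of $\supp(\widetilde{\mathfrak G}_w)$ then follows from Theorem~\ref{thm:SBD_MConvex}. The set $A$ of southmost squares contains at most one square per column, so that theorem applies. What remains is to check that the homogenizing exponent matches. Each K-bubbling move raises $|D|$ by one and raises the excess by one. Hence $|\wt(\mathcal D)| = \ell(w) + \text{ex}(\mathcal D)$, and $z^{m-\text{ex}(\mathcal D)}\wt(\mathcal D)$ is exactly the exponent vector appearing in $\widetilde{\mathfrak G}_w$, provided $m = \deg\hat{\mathfrak G}_w - \ell(w)$. That identity follows from the support equality itself.

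For the support equality I will work with Theorem~\ref{thm:BPDGroth}. The sign $(-1)^{|D(P)|+|S|-\ell(w)}$ depends only on the total degree, so no cancellation occurs. Hence $\supp(\mathfrak G_w)=\{\wt(P,S)\colon (P,S)\in\MBPD(w)\}$.

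\textbf{Inclusion $\subseteq$.} I start from the Rothe BPD, whose blank tiles are exactly $D(w)$, and simulate the moves.
\begin{itemize}
\item An ordinary bubbling move of a live square from $(i,j)$ to $(i-1,j)$ is realized by a droop move. Read in reverse, it pulls the blank tile up one row within its column.
\item A K-bubbling move of the distinguished square at the bottom of a column is realized by a K-theoretic droop. Alternatively, it is realized by marking an upper elbow $\in U(P)$, so that the dead copy corresponds to a marked tile in $S$.
\end{itemize}
I will keep the invariant that the live squares of the bubbling diagram are the blank tiles and the dead squares are the marked elbows. Maintaining it uses the fact that all distinguished squares sit at column bottoms: K-bubbling then only ever stacks dead squares beneath the current distinguished square in each column.

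\textbf{Inclusion $\supseteq$.} This is where the fireworks hypothesis enters, and I expect it to be the main obstacle. I will argue degree by degree, by induction on the excess. The base case $k=0$ is the known statement that the Schubert support is $\{\wt(C)\colon C\le D(w)\}$ for these diagrams, and this is realized by ordinary bubbling. The top degree is handled by Theorem~\ref{thm:firetop}. The upward closure $\overline{D(w)}$ is obtained by K-bubbling every column's bottom square all the way to row $1$, so the top-degree weights are attained.

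For intermediate degrees I will use the structure of fireworks permutations. Its decreasing runs with increasing initial terms mean that each column of $D(w)$ is a contiguous block sitting above the descent rows, with no gaps created by later runs. With this structure, a marked BPD $(P,S)$ can be normalized, via droop and K-droop moves that do not change its weight, into a form where:
\begin{itemize}
\item the blank tiles and marked elbows in each column form a single vertical segment;
\item the marked tiles lie directly below a blank tile in that segment.
\end{itemize}
Such a configuration is visibly a streamlined bubbling diagram obtained by K-bubbling the bottom square. If normalizing directly proves awkward, the fallback is to induct on $n$. I would peel off the first decreasing run, or use the recursion $\mathfrak G_w=\partial_i((1-x_{i+1})\mathfrak G_{ws_i})$ at the last ascent, which preserves fireworks. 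I would then track how $\overline{\pi}$-type operators act on the SBD weight sets, namely by bubbling one row up, with or without leaving a dead copy.
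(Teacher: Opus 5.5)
The paper does not prove this statement; it quotes it from Chou--Setiabrata. Judged on its own, most of your bookkeeping is fine:
\begin{itemize}
\item The sign in Theorem~\ref{thm:BPDGroth} depends only on degree, so $\supp(\mathfrak G_w)$ is exactly the set of marked-BPD weights.
\item Each K-bubbling move raises $|\wt|$ and the excess by one, so $|\wt(\mathcal D)|=\ell(w)+\mathrm{ex}(\mathcal D)$. M-convexity therefore follows from Theorem~\ref{thm:SBD_MConvex} once the support equality is known.
\item The excess-$0$ layer is $\{\wt(C)\colon C\le D(w)\}$.
\item $\overline{D(w)}$ is reachable, provided you first bubble the non-distinguished squares of each column upward.
\end{itemize}
What is missing is the support equality in intermediate degrees, which is the entire content of the theorem. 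Both mechanisms you propose for it fail.

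Take first the direction ``SBD weights $\subseteq\supp(\mathfrak G_w)$'' (the one you label $\subseteq$). Your invariant ``live squares $=$ blank tiles'' breaks on the first move. For $w=132$ we have $D(w)=\{(2,2)\}$, and bubbling gives the live square $(1,2)$. But the only BPD of weight $(1,0,0)$ has its blank at $(1,1)$, because the droop carries the corner blank strictly northwest. In general the blank-tile sets of BPDs are not column-wise dominated by $D(w)$; already here column $1$ goes from $0$ squares to $1$. So any dictionary between the two models can only match weights, not squares. You would then need a real argument that every SBD carrying dead squares has the weight of some marked BPD.

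For the reverse direction, the normalization is asserted rather than proved, and the structural fact behind it is false. The permutation $w=3142$ is fireworks (runs $31\,|\,42$), yet $D(w)=\{(1,1),(1,2),(3,2)\}$ has a gap in column $2$. Moreover, droop and K-theoretic droop moves do not preserve weight:
\begin{itemize}
\item a droop lifts the corner blank from row $c$ to a row $a<c$;
\item a K-theoretic droop adds a blank tile.
\end{itemize}
So ``normalize by weight-preserving droops'' is not a defined procedure. Nothing guarantees that the resulting blanks and marked elbows occupy columns compatible with $D(w)$, which is what reading the configuration as an element of $\SBD(D(w),\emptyset,A)$ requires.

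The fallback does not work as stated either. The recursion $\mathfrak G_w=\partial_i((1-x_{i+1})\mathfrak G_{ws_i})$ leaves the fireworks class. For $w=2143$, the only ascent gives $ws_2=2413$, whose runs $2\,|\,41\,|\,3$ have non-increasing initial terms. You also give no control of cancellation that would justify describing $\partial_i$ on supports as ``bubbling one row up.''

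The missing idea is the combinatorial heart of the cited result: a proof that every intermediate-degree monomial of $\mathfrak G_w$ is the weight of a streamlined bubbling diagram with $A$ the bottom squares, and conversely.
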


\section{New Bubbling Diagrams for Vexillary Grothendieck Polynomials}
\label{sec:NewvexBubb}
In this section, we introduce a new set of bubbling diagrams which also compute the supports of vexillary Grothendieck polynomials.
\begin{defnx}
    \label{defn:DistSquares-1PerRow}
    Let $w\in S_n$ be vexillary.  We define $A_{\text{new}}(w)\subset D(w)$ as follows:
    \begin{enumerate}
        \item Set $i=n$ and $A_{\text{new}}(w)=\emptyset$.
        \item Let $(i,j)$ be the first square of row $i$ (in the $\prec_R$ order) such that $(i',j)\notin A_{\text{new}}(w)$ for any $i'>i$, and add $(i,j)$ to $A_{\text{new}}(w)$.  If no such $(i,j)$ exists, leave $A_{\text{new}}(w)$ unchanged.
        \item If $i>1$, reduce $i$ by $1$ and repeat step (2).  Otherwise, terminate the procedure.
    \end{enumerate}
\end{defnx}
\begin{thm}
    \label{thm:SBD-1PerRow}
    If $w\in S_n$ is vexillary, then \[\supp(\mathfrak G_w) = \{\wt(\mathcal D)\colon \mathcal D \in \SBD(D(w), \emptyset,A_{\text{new}}(w))\}.\]
    Furthermore, $D_{\text{top}}(D(w),A_R(w))=D_{\text{top}}(D(w),A_{\text{new}}(w))$.
\end{thm}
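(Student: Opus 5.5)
We plan to reduce the statement to Theorem~\ref{thm:supp-Gw-SBD-right} by comparing $A_{\text{new}}(w)$ with $A_R(w)$. Both sets contain at most one square per column and are built greedily in the $\prec_R$ order from the bottom row upward. The difference lies in the rejection rule: $A_R(w)$ rejects a square that shares a column \emph{or a linking class} with an earlier choice, and may select several squares in one row, while $A_{\text{new}}(w)$ selects at most one square per row. The first step is a structural lemma about vexillary Rothe diagrams. Its aim is to show that $A_R(w)$ also has at most one square per row, and that both procedures select, in each row, a square from the same ``stack'' of columns. Concretely, vexillarity gives the following picture. Within a row, squares with the same number of squares below them are interchangeable by a column permutation that preserves the relevant structure. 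Moreover, two squares of the same row with the same rank lie in the same linking class. Since rank is weakly increasing along a row of $D(w)$, we expect the linking-class rule in $A_R(w)$ to exclude exactly the squares that the one-per-row rule excludes.

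The second step treats the case where the two sets do not literally coincide, which is the case we expect. Here we would show that $A_{\text{new}}(w)$ is obtained from $A_R(w)$ by a sequence of local changes, each of which preserves the set of reachable weights. The natural local change is this: if $(i,j),(i,j')\in A_R(w)$ lie in the same row, replace one of them by a square of a lower row in a column not yet used. The alternative is to relocate a distinguished square within a column to a lower square that can bubble into its position. For each such change, we prove that any streamlined bubbling diagram from one initial set can be simulated from the other. The argument compares K-bubbling sequences column by column. A K-bubbling at a lower square in a column produces, after further bubbling moves, the same weight as starting the distinguished square higher and bubbling live squares into the vacated positions. We expect to use that the squares below a distinguished square in its column form a contiguous block in the vexillary case, because Rothe diagram columns in the relevant region are packed up to missing teeth created by hooks. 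We then conclude via Theorem~\ref{thm:supp-Gw-SBD-right}.

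The third step, the equality $D_{\text{top}}(D(w),A_R(w))=D_{\text{top}}(D(w),A_{\text{new}}(w))$, follows from the same column-by-column analysis. It suffices to show that in every column $j$ the two sets either both contain no square, or contain squares $(i,j)$ and $(i',j)$ such that all squares of column $j$ strictly between rows $i$ and $i'$ already lie in $D(w)$. Then the upward fill in Definition~\ref{defn:Dtop} produces the same set. This matches the weight-equivalence argument, since it is exactly the condition under which moving the distinguished square within a contiguous block is harmless.

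The main obstacle is the structural lemma relating the linking-class rule to the one-square-per-row rule. In particular, it is delicate to show that whenever $A_R(w)$ skips a row, or uses two squares in a row, the corresponding choice in $A_{\text{new}}(w)$ lands in the same column at a position connected by squares of $D(w)$. I would first test this on the example $w=1624735$ in Figure~\ref{fig:bubbling-order-Dtop1}. I would then try to prove it by induction on rows from the bottom, using the rank function $r_{D(w)}$ and the fact that, for vexillary $w$, the essential set lies on a chain, so linking classes are ordered compatibly with rows.
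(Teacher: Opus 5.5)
Your global reduction is the right one and is the paper's. You show that $A_{\text{new}}(w)$ arises from $A_R(w)$ by sliding distinguished squares within their own columns across squares of $D(w)$. You check that each slide preserves the reachable diagrams and $D_{\text{top}}$; this is Lemma~\ref{lem:pushing-up}, and your Step~3 criterion is exactly why $D_{\text{top}}$ is unchanged. You then conclude with Theorem~\ref{thm:supp-Gw-SBD-right}.

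However, the combinatorial step that carries the content is misstated. The aim of your Step~1 is false. For $w=1624735$ (Figure~\ref{fig:bubbling-order-Dtop}) one has $A_R(w)=\{(5,5),(5,3),(2,4)\}$. The two squares in row $5$ have ranks $3$ and $2$, so they are not linked. Thus the linking-class rule does \emph{not} exclude what the one-per-row rule excludes, and indeed $A_{\text{new}}(w)=\{(5,5),(4,3),(2,4)\}$. The first local change in your Step~2 is also the wrong move: it replaces a second square of a row by a square in a lower row and an unused column. That changes which columns carry distinguished squares, which in general changes $D_{\text{top}}$ and the weight set. It also contradicts your own Step~3, and in this example no lower square exists at all. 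The correct correction goes \emph{up}, in the \emph{same} column.

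What is missing is the mechanism that produces those slides and identifies where they end. The paper processes rows from $n$ down to $1$. Whenever the current set has several squares in row $i$, it keeps the $\prec_R$-first one and pushes each of the others up one row in its own column, possibly repeatedly. Two vexillary facts justify this:
\begin{enumerate}
\item If $(i,j_1),(i,j_2)$ are unlinked and $(i,j_1)\prec_R(i,j_2)$, then $j_1>j_2$; otherwise one finds a $2143$ pattern. So the square to be pushed lies to the left.
\item If $j_1<j_2$ are unlinked in row $i$ with rank difference $m$, then $(i-k,j_1)\in D(w)$ for $0\le k\le m$, and $(i-m,j_1)$ is linked to $(i,j_2)$. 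So every push lands on a square of $D(w)$, and Lemma~\ref{lem:pushing-up} applies.
\end{enumerate}
Even then, you still need a separate argument that the resulting one-per-row set coincides with the greedily defined $A_{\text{new}}(w)$. In the paper this is Lemma~\ref{lem:NewDiagAlt}, a row count inside a connected component of $D(w)$. Your proposal names this as ``the main obstacle'' but gives no route to it. As written, the proof is incomplete exactly where the work lies.
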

See Figure \ref{fig:bubbling-order-Dtop} for an example.  In order to prove Theorem \ref{thm:SBD-1PerRow}, we will make use of the following lemmas.

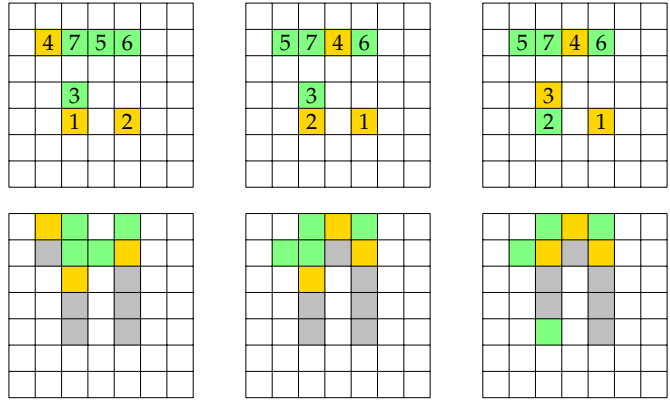
\begin{figure}
    \centering
    \scalebox{.7}{\begin{tikzpicture}
    \filldraw[draw=darkgray, color=green, opacity=.5] (.5,2.5) rectangle (2.5,3);
    \filldraw[draw=darkgray, color=green, opacity=.5] (1,1) rectangle (1.5,2);
    \filldraw[draw=darkgray, color=gold] (2,1) rectangle (2.5,1.5);
    \filldraw[draw=darkgray, color=gold] (1,1) rectangle (1.5,1.5);
    \filldraw[draw=darkgray, color=gold] (.5,2.5) rectangle (1,3);
        \draw[step=.5cm,black,very thin] (0,0) grid (3.5,3.5);
        \node at (1.25,1.25) {1};
        \node at (2.25,1.25) {2};
        \node at (1.25,1.75) {3};
        \node at (.75,2.75) {4};
        \node at (1.25,2.75) {7};
        \node at (1.75,2.75) {5};
        \node at (2.25,2.75) {6};
        \begin{scope}[xshift=4.5cm]
        \filldraw[draw=darkgray, color=green, opacity=.5] (.5,2.5) rectangle (2.5,3);
    \filldraw[draw=darkgray, color=green, opacity=.5] (1,1) rectangle (1.5,2);
    \filldraw[draw=darkgray, color=gold] (2,1) rectangle (2.5,1.5);
    \filldraw[draw=darkgray, color=gold] (1,1) rectangle (1.5,1.5);
    \filldraw[draw=darkgray, color=gold] (1.5,2.5) rectangle (2,3);
    \node at (1.25,1.25) {2};
        \node at (2.25,1.25) {1};
        \node at (1.25,1.75) {3};
        \node at (.75,2.75) {5};
        \node at (1.25,2.75) {7};
        \node at (1.75,2.75) {4};
        \node at (2.25,2.75) {6};
            \draw[step=.5cm,black,very thin] (0,0) grid (3.5,3.5);
        \end{scope}
        \begin{scope}[xshift=9cm]
        \filldraw[draw=darkgray, color=green, opacity=.5] (.5,2.5) rectangle (2.5,3);
    \filldraw[draw=darkgray, color=green, opacity=.5] (1,1) rectangle (1.5,2);
    \filldraw[draw=darkgray, color=gold] (2,1) rectangle (2.5,1.5);
    \filldraw[draw=darkgray, color=gold] (1,1.5) rectangle (1.5,2);
    \filldraw[draw=darkgray, color=gold] (1.5,2.5) rectangle (2,3);
    \node at (1.25,1.25) {2};
        \node at (2.25,1.25) {1};
        \node at (1.25,1.75) {3};
        \node at (.75,2.75) {5};
        \node at (1.25,2.75) {7};
        \node at (1.75,2.75) {4};
        \node at (2.25,2.75) {6};
            \draw[step=.5cm,black,very thin] (0,0) grid (3.5,3.5);
        \end{scope}
        \begin{scope}[yshift=-4cm]
        \filldraw[draw=darkgray, color=green, opacity=.5] (.5,2.5) rectangle (2.5,3);
    \filldraw[draw=darkgray, color=green, opacity=.5] (1,1) rectangle (1.5,2);
    \filldraw[draw=darkgray, color=green, opacity=.5] (2,3) rectangle (2.5,3.5);
    \filldraw[draw=darkgray, color=gold] (2,2.5) rectangle (2.5,3);
    \filldraw[draw=darkgray, color=lightgray] (2,1) rectangle (2.5,2.5);
    \filldraw[draw=darkgray, color=green, opacity=.5] (1,3) rectangle (1.5,3.5);
    \filldraw[draw=darkgray, color=gold] (1,2) rectangle (1.5,2.5);
    \filldraw[draw=darkgray, color=lightgray] (1,1) rectangle (1.5,2);
    \filldraw[draw=darkgray, color=lightgray] (.5,2.5) rectangle (1,3);
    \filldraw[draw=darkgray, color=gold] (.5,3) rectangle (1,3.5);
            \draw[step=.5cm,black,very thin] (0,0) grid (3.5,3.5);
            \begin{scope}[xshift=4.5cm]
            \filldraw[draw=darkgray, color=green, opacity=.5] (.5,2.5) rectangle (2.5,3);
    \filldraw[draw=darkgray, color=green, opacity=.5] (1,1) rectangle (1.5,2);
    \filldraw[draw=darkgray, color=green, opacity=.5] (2,3) rectangle (2.5,3.5);
    \filldraw[draw=darkgray, color=gold] (2,2.5) rectangle (2.5,3);
    \filldraw[draw=darkgray, color=lightgray] (2,1) rectangle (2.5,2.5);
    \filldraw[draw=darkgray, color=green, opacity=.5] (1,3) rectangle (1.5,3.5);
    \filldraw[draw=darkgray, color=gold] (1,2) rectangle (1.5,2.5);
    \filldraw[draw=darkgray, color=lightgray] (1,1) rectangle (1.5,2);
    \filldraw[draw=darkgray, color=lightgray] (1.5,2.5) rectangle (2,3);
    \filldraw[draw=darkgray, color=gold] (1.5,3) rectangle (2,3.5);
            \draw[step=.5cm,black,very thin] (0,0) grid (3.5,3.5);
        \end{scope}
        \begin{scope}[xshift=9cm]
        \filldraw[draw=darkgray, color=green, opacity=.5] (.5,2.5) rectangle (2.5,3);
    \filldraw[draw=darkgray, color=green, opacity=.5] (2,3) rectangle (2.5,3.5);
    \filldraw[draw=darkgray, color=gold] (2,2.5) rectangle (2.5,3);
    \filldraw[draw=darkgray, color=lightgray] (2,1) rectangle (2.5,2.5);
    \filldraw[draw=darkgray, color=green, opacity=.5] (1,3) rectangle (1.5,3.5);
    \filldraw[draw=darkgray, color=gold] (1,2.5) rectangle (1.5,3);
    \filldraw[draw=darkgray, color=lightgray] (1,1.5) rectangle (1.5,2.5);
    \filldraw[draw=darkgray, color=green, opacity=.5] (1,1) rectangle (1.5,1.5);
    \filldraw[draw=darkgray, color=lightgray] (1.5,2.5) rectangle (2,3);
    \filldraw[draw=darkgray, color=gold] (1.5,3) rectangle (2,3.5);
            \draw[step=.5cm,black,very thin] (0,0) grid (3.5,3.5);
        \end{scope}
        \end{scope}
    \end{tikzpicture}}
    \caption{Top left: $D(w)$ for $w=1624735 $ showing the left bubbling order and $A_L(w)$. Top center: $D(w)$ showing the right bubbling order and $A_R(w)$.  Top right: $D(w)$ showing the right bubbling order and $A_{\text{new}}(w)$.  Bottom left: $D_{\text{top}}(w)$.  Bottom center: $D_{\text{top}}(D(w),A_R(w))$.  Bottom right: $D_{\text{top}}(D(w),A_{\text{new}}(w))$.}
    \label{fig:bubbling-order-Dtop}
\end{figure}

\begin{lem}
\label{lem:RDSquares1}
    Fix a vexillary $w\in S_n$.  Suppose $(i,j_1),(i,j_2) \in D(w)$ are not linked and $(i,j_1)\prec_R (i,j_2)$.  Then $j_1>j_2$.
\end{lem}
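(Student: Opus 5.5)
We prove the contrapositive. Fix $i$ and suppose $(i,j_1),(i,j_2)\in D(w)$ are not linked and $j_1<j_2$. It suffices to show that $(i,j_1)$ has at least as many squares of $D(w)$ below it as $(i,j_2)$ does. Indeed, the two squares lie in the same row, so the right bubbling order compares them first by the number of squares below them, and on a tie it puts the square with larger column index first. Hence if $(i,j_1)$ has at least as many squares below it, then $(i,j_2)\prec_R (i,j_1)$, which contradicts $(i,j_1)\prec_R(i,j_2)$.

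\emph{Step 1: a witnessing dot.} Two squares in the same row are linked exactly when they have equal rank, since $i-i'=0$. The rank $r_{D(w)}(i,j)$ is weakly increasing in $j$ along a row, so non-linkedness together with $j_1<j_2$ gives $r_{D(w)}(i,j_1)<r_{D(w)}(i,j_2)$. Hence there is some $k<i$ with $j_1\le w(k)<j_2$. Since $(i,j_1)\in D(w)$, we have $w^{-1}(j_1)>i>k$, so $w(k)\neq j_1$. Therefore $j_1<w(k)<j_2$.

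\emph{Step 2: an injection of squares below.} Let $(i',j_2)\in D(w)$ with $i'>i$. Then $i'<b:=w^{-1}(j_2)$ and $w(i')>j_2>j_1$. We claim $(i',j_1)\in D(w)$. The condition $w(i')>j_1$ already holds, so it remains to show $i'<a:=w^{-1}(j_1)$.
\begin{itemize}
\item We cannot have $i'=a$, because $w(i')>j_2>j_1=w(a)$.
\item Suppose $i'>a$. Since $(i,j_1)\in D(w)$, we have $a>i>k$, so the positions satisfy $k<a<i'<b$.
\item The corresponding values satisfy $w(a)=j_1<w(k)<j_2=w(b)<w(i')$.
\item Thus $w(k),w(a),w(i'),w(b)$ form a $2143$ pattern, contradicting that $w$ is vexillary.
\end{itemize}
So $i'<a$, and $(i',j_1)\in D(w)$. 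The map $(i',j_2)\mapsto(i',j_1)$ is therefore an injection from the squares below $(i,j_2)$ into the squares below $(i,j_1)$, which is exactly the inequality needed.

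The only substantive point is Step 2, where vexillarity enters: the dot found in Step 1 supplies the "$2$" of the forbidden $2143$ pattern.
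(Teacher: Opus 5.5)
Your proof is correct and follows essentially the same route as the paper: non-linkedness gives a value $w(k)$ with $k<i$ and $j_1<w(k)<j_2$, and any row $i'>i$ with $(i',j_2)\in D(w)$ but $(i',j_1)\notin D(w)$ would yield the $2143$ pattern $w(k),\, j_1,\, w(i'),\, j_2$. The only differences are cosmetic: you phrase the counting step as an explicit injection and handle the case $i'=w^{-1}(j_1)$ separately, whereas the paper picks a single failing row $i'$ and argues by contradiction.
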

\begin{proof}
Suppose (in search of a contradiction) that $j_1<j_2$.  By the definition of $\prec_R$, this would imply that there exists some $i'>i$ such that $(i',j_2)\in D(w)$ but $(i',j_1)\notin D(w)$.  This requires that $w^{-1}(j_1)<i'$.  
\begin{center}
\begin{tikzpicture}
\filldraw[green, opacity=0.5] (0,1.5) rectangle (0.5,2.5);
\filldraw[green, opacity=0.5] (1,1.5) rectangle (2,2);
\filldraw[green, opacity=0.5] (1,.5) rectangle (1.5,1);
    \draw[step=.5cm,gray,very thin] (0,0) grid (2.5,2.5);
    \node at (.25,-.5) {$j_1$};
    \node at (.75,-.5) {$j'$};
    \node at (1.25,-.5) {$j_2$};
    \node at (1.95,-.5) {$w(i')$};
    \node at (-.5,1.75) {$i$};
    \node at (-.5,.75) {$i'$};
    \draw[black, thick] (.25,0)--(.25,1.25)--(2.5,1.25);
    \draw[black, thick] (.75,0)--(.75,2.25)--(2.5,2.25);
    \draw[black, thick] (1.25,0)--(1.25,.25)--(2.5,.25);
    \draw[black, thick] (1.75,0)--(1.75,.75)--(2.5,.75);
    \draw[black, thick] (2.25,0)--(2.25,1.75)--(2.5,1.75);
\end{tikzpicture}
\end{center}
Furthermore, since $(i,j_1)$ and $(i,j_2)$ are not linked, there must exist some $j_1<j'<j_2$ such that $w^{-1}(j')<i$.  The sequence $j', j_1, w(i), j_2$ thus form a $2143$ pattern in the one-line notation of $w$, contradicting the fact that $w$ is vexillary.
\end{proof}
\begin{lem}
\label{lem:RDSquares2}
    Fix a vexillary $w\in S_n$.  Suppose $(i,j_1),(i,j_2) \in D(w)$ are not linked and $j_1<j_2$.  Let $m=r_{D(w)}(i,j_2)-r_{D(w)}(i,j_1)$.  Then for all $0\leq k\leq m$, $(i-k,j_1)\in D(w)$.  Furthermore, $(i-m,j_1)$ and $(i,j_2)$ are linked.
\end{lem}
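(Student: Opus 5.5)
The plan is to translate both conclusions into statements about the positions of dots of $w$, and then to rule out the one bad configuration using $2143$-avoidance, in the spirit of the proof of Lemma~\ref{lem:RDSquares1}.

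First I interpret $m$. Since $(i,j_1),(i,j_2)\in D(w)$, we have $w^{-1}(j_1)>i$ and $w^{-1}(j_2)>i$, and also $w(i)>j_2$. The definition of the rank function then gives that $m=r_{D(w)}(i,j_2)-r_{D(w)}(i,j_1)$ counts the rows $k<i$ with $j_1<w(k)<j_2$. Two squares in the same row are linked exactly when their ranks agree, so $m>0$. Because these $m$ dots lie in distinct rows above row $i$, we get $i-1\geq m$, so every row $i-k$ with $0\le k\le m$ is a genuine row of the grid.

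Next I reduce both claims to a single statement:
\[(\ast)\qquad w(r)>j_1 \text{ for every } r \text{ with } i-m\le r\le i-1.\]
Suppose $(\ast)$ holds. Then for each $0\le k\le m$ we have $w(i-k)>j_1$ and $w^{-1}(j_1)>i\geq i-k$, so $(i-k,j_1)\in D(w)$. Moreover, no dot in rows $i-m,\dots,i-1$ lies in a column less than $j_1$, so $r_{D(w)}(i-m,j_1)=r_{D(w)}(i,j_1)$. Consequently
\[r_{D(w)}(i,j_2)-r_{D(w)}(i-m,j_1)=m=i-(i-m),\]
which is exactly the statement that $(i-m,j_1)$ and $(i,j_2)$ are linked.

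It remains to prove $(\ast)$, which is the main step. Suppose instead that some row $r\in[i-m,i-1]$ has $w(r)<j_1$. The rows strictly between $r$ and $i$ number $i-1-r\le m-1$, so they cannot contain all $m$ of the dots with columns in $(j_1,j_2)$ and rows above $i$. Hence some such dot sits in a row $a<r$, with $j_1<w(a)<j_2$. Now consider the positions $a<r<i<w^{-1}(j_2)$, which carry the values $w(a)$, $w(r)$, $w(i)$ and $j_2$. These satisfy $w(r)<j_1<w(a)<j_2<w(i)$. In relative order the values read $2,1,4,3$, a $2143$ pattern, contradicting that $w$ is vexillary. I expect the pigeonhole choice of a dot above row $r$ to be the only delicate point; it is what makes $j_2$, rather than $j_1$, the correct value to play the role of the ``$3$'' in the pattern.
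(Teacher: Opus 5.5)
Your proof is correct and follows essentially the same route as the paper. The paper likewise reads $m$ as the number of dots above row $i$ in columns strictly between $j_1$ and $j_2$, and uses $2143$-avoidance to rule out a dot left of column $j_1$ lying between the topmost of these dots and row $i$; your pigeonhole step and the rank computation $r_{D(w)}(i-m,j_1)=r_{D(w)}(i,j_1)$ spell out what the paper leaves as ``follows from the definitions.''
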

\begin{proof}
    By definition, there must be $j_1<j'_1<\ldots<j'_m<j_2$ such that $w^{-1}(j'_k)<i$ for all $1\leq k\leq m$.  Furthermore, the properties of vexillary Rothe diagrams imply that for any $j'<j_1$, either $w^{-1}(j')<\text{min}\{w^{-1}(j'_k):1\leq k\leq m\}$ or $i<w^{-1}(j')$.  The result then follows from the definitions of Rothe diagrams and linking classes.
\end{proof}
\begin{lem}
    \label{lem:RDSquares3}
    Fix a vexillary $w\in S_n$.  Suppose $(i_1,j_1),(i_2,j_2)\in D(w)$ are linked and $i_1<i_2$.  Then $(i,j_1)\in D(w)$ for all $i_1\leq i \leq i_2$.
\end{lem}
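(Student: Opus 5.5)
We need to prove Lemma \ref{lem:RDSquares3}: if $(i_1,j_1),(i_2,j_2)\in D(w)$ are linked and $i_1<i_2$, then the whole vertical segment $(i,j_1)$, $i_1\le i\le i_2$, lies in $D(w)$.

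\emph{Setup.} Since $(i_1,j_1)\in D(w)$, we have $w^{-1}(j_1)>i_1$ and $w(i_1)>j_1$. So the only way $(i,j_1)$ can fail to lie in $D(w)$ for some $i_1<i\le i_2$ is either
\begin{itemize}[label={}]
\item (a) $w^{-1}(j_1)\le i_2$, i.e. the column-$j_1$ hook is cut off at or above row $i_2$, or
\item (b) some row $i$ with $i_1<i\le i_2$ has $w(i)<j_1$.
\end{itemize}
Write $r_1=r_{D(w)}(i_1,j_1)$ and $r_2=r_{D(w)}(i_2,j_2)$. Linkedness gives $r_2-r_1=i_2-i_1$.

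\emph{Counting the rank difference.} Let $P$ be the set of dots $(k,w(k))$ with $k<i_2$ and $w(k)<j_2$, so $r_2=|P|$. The dots counted in $r_1$ (those with $k<i_1$, $w(k)<j_1$) form a subset of $P$ only when $j_1\le j_2$, so I first handle the case $j_1>j_2$ separately. If $j_1>j_2$, then $P$ contains at most those dots with $k<i_1$ and $w(k)<j_2$, which is at most $r_1$ of them, together with at most one dot from each row $i_1\le k<i_2$. Row $i_1$ contributes nothing, because $w(i_1)>j_1>j_2$. Hence $r_2-r_1\le i_2-i_1-1$, which contradicts linkedness. Therefore $j_1\le j_2$.

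\emph{The extra dots.} Now $r_2-r_1$ counts the dots in $P$ lying outside the rectangle $\{k<i_1,\ w(k)<j_1\}$. These are:
\begin{itemize}[label={}]
\item dots in rows $i_1\le k<i_2$ with $w(k)<j_2$ (at most $i_2-i_1$ of them, and row $i_1$ gives none since $w(i_1)>j_2$ would follow from $(i_1,j_2)$ considerations — more carefully, row $i_1$ contributes at most one);
\item dots with $k<i_1$ and $j_1\le w(k)<j_2$.
\end{itemize}
For the count to reach exactly $i_2-i_1$, the dot counting has to be tight. My plan is to compare $(i_2,j_2)$ with the intermediate square $(i_2,j_1)$, or, if that square is not in the diagram, with $(i_1,j_2)$. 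The rank-increment structure of vexillary diagrams then forces every row $k$ with $i_1<k<i_2$ to have its dot at a column in $(j_1,j_2)$ rather than left of $j_1$.

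\emph{Ruling out (a) and (b).} Suppose, for contradiction, that (b) holds: some row $i$ in $(i_1,i_2]$ has $w(i)<j_1$. This dot lies south-east of the hooks that produce the rank increments. I then intend to exhibit a 2143 pattern, in the style of Lemma \ref{lem:RDSquares1}, using the dot $(i,w(i))$, a dot $j'$ with $j_1<j'<j_2$ and $w^{-1}(j')<i$, and the dots in rows $i_1$ and $i_2$ (recall $w(i_2)>j_2$ and $w^{-1}(j_2)>i_2$). Case (a) is handled the same way: the dot $(w^{-1}(j_1),j_1)$ sits between rows $i_1$ and $i_2$ at column $j_1\le j_2$, so it plays the same role as the dot in (b).

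\emph{Expected obstacle.} The main difficulty is the exact bookkeeping that turns "the linking equality is tight" into "no dot lands left of column $j_1$ in the rows strictly between $i_1$ and $i_2$." To get this, I would pass through Lemma \ref{lem:RDSquares2} and a chain of intermediate linked squares, inducting on $i_2-i_1$. The aim is to reduce to the case where consecutive linked squares lie in adjacent rows, and in that case the argument is a direct application of the pattern argument above.
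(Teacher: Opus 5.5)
Your proposal points at the right tool, a $2143$ pattern as in Lemma~\ref{lem:RDSquares1}. However, the decisive step is only announced, and both routes you sketch for it fail.

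First, the claim you aim for is false: that every row $k$ with $i_1<k<i_2$ has $j_1<w(k)<j_2$. For the vexillary $w=2356147$ one has $D(w)=\{(1,1),(2,1),(3,1),(4,1),(3,4),(4,4)\}$. The squares $(2,1)$ and $(4,4)$ are linked, since $r_{D(w)}(4,4)-r_{D(w)}(2,1)=2-0=4-2$, yet $w(3)=5>j_2=4$. The paper's one-line proof asserts this same strengthened claim (for $i_1\le i<i_2$), so it is not a usable target either; the lemma's conclusion does hold in this example.

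Second, the same example defeats your induction on $i_2-i_1$. No square in row $3$ is linked to $(2,1)$, so linking classes can skip rows, and there is no chain of linked squares in adjacent rows to reduce to.

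Most importantly, your pattern sketch never says where a dot with column strictly between $j_1$ and $j_2$, lying north of the offending row, comes from. That is exactly the one place where linkedness must be used. The dot in row $i_1$ serves only when $w(i_1)<j_2$, which can fail: take $w=24513$ with linked squares $(2,1),(3,3)$, where $w(2)=4$.

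To close the gap, set $c(i,j)=\#\{k<i\colon w(k)>j\}$. For $(i,j)\in D(w)$ no $k<i$ has $w(k)=j$, so $i-1=r_{D(w)}(i,j)+c(i,j)$, and linkedness means $c(i_1,j_1)=c(i_2,j_2)$.

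If $j_1\ge j_2$, then row $i_1$ contributes to $c(i_2,j_2)$ on top of $c(i_1,j_1)$, so $c(i_2,j_2)\ge c(i_1,j_1)+1$. Hence $j_1<j_2$; note your count as written only excludes $j_1>j_2$. Linkedness then becomes
\[\#\{k<i_1\colon j_1<w(k)<j_2\}=\#\{i_1\le k<i_2\colon w(k)>j_2\}.\]

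Your cases (a) and (b) both produce some $m$ with $i_1<m<i_2$ and $w(m)\le j_1$. For (a), use that $w^{-1}(j_1)\ne i_2$, because $w(i_2)>j_2>j_1$.

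Now choose $p$ as follows:
\begin{itemize}
\item If $w(i_1)<j_2$, take $p=i_1$.
\item Otherwise $w(i_1)>j_2$ (equality is impossible since $w^{-1}(j_2)>i_2$). Then the right-hand side above is positive, so the left-hand side supplies some $p<i_1$ with $j_1<w(p)<j_2$.
\end{itemize}
In either case the positions $p<m<i_2<w^{-1}(j_2)$ carry values satisfying $w(m)<w(p)<j_2<w(i_2)$. This is a $2143$ pattern, contradicting vexillarity, and no induction is needed.
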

\begin{proof}
    Observe that by the definitions of vexillary Rothe diagrams and linking classes, it must be the case that $j_1<j_2$ and for all $i_1\leq i<i_2$, $j_1<w(i)<j_2$.  The result follows immediately.  
\end{proof}
\begin{defnx}
    \label{def:NewDistSquares}
    Let $w\in S_n$ be vexillary.  Beginning with $A_R(w)$, we will construct a new set of distinguished live squares $A_R'(w)$ via the following procedure:
    \begin{enumerate}
        \item Set $i=n$ and $A'^{(n)}_R(w)=A_R(w)$.
        \item If $A'^{(i)}_R(w)$ has at most one square in row $i$, set $A'^{(i-1)}_R(w)=A'^{(i)}_R(w)$.

        \noindent Otherwise, let $\{(i,j_1),(i,j_2),\ldots,(i,j_m)\}$ be the set of squares in row $i$ of $A'^{(i)}_R(w)$, and suppose that $(i,j_1)\prec_R(i,j_2)\prec_R\cdots\prec_R(i,j_m)$.  Set $A'^{(i-1)}_R(w)=(A'^{(i)}_R(w)\setminus\{(i,j_2),\ldots,(i,j_m)\})\cup\{(i-1,j_2),\ldots,(i-1,j_m)\}$.
        \item If $i \geq 2$, decrease $i$ by $1$ and repeat step $2$.
        \item Define $A_R'(w):= A'^{(1)}_R(w)$.
    \end{enumerate}
\end{defnx}

\begin{lem}
    \label{lem:NewDiagramWellDef}
    For every vexillary $w \in S_n$, $A_R'(w)\subset D(w)$, and each row of $A_R'(w)$ contains at most one square.
\end{lem}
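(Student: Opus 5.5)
We argue by downward induction on $i$, following the construction in Definition~\ref{def:NewDistSquares}, and prove a stronger invariant. For each $i$, the set $A'^{(i)}_R(w)$ has the following properties:
\begin{enumerate}
\item[(a)] it is contained in $D(w)$;
\item[(b)] it has at most one square in each column;
\item[(c)] every row $i'>i$ contains at most one of its squares;
\item[(d)] any two of its squares in the same row $i'\le i$ are pairwise not linked, and each of them is linked to a square of $A_R(w)$ lying in the same column.
\end{enumerate}
The base case is $i=n$, where $A'^{(n)}_R(w)=A_R(w)$. Property (a) is clear. Properties (b) and (d) hold by the column and linking-class conditions used in constructing $A_R(w)$, with each square linked to itself. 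Property (c) is vacuous.

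For the inductive step, assume row $i$ of $A'^{(i)}_R(w)$ contains $(i,j_1)\prec_R\cdots\prec_R(i,j_m)$ with $m\ge2$. These squares are pairwise unlinked by (d), so Lemma~\ref{lem:RDSquares1} gives $j_1>j_2>\cdots>j_m$.

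The main point is that $(i-1,j_k)\in D(w)$ for each $k\ge2$. Since $j_k<j_1$ and the two squares are not linked, Lemma~\ref{lem:RDSquares2} applies to the pair $(i,j_k),(i,j_1)$. Set $m'=r_{D(w)}(i,j_1)-r_{D(w)}(i,j_k)$. Non-linkedness forces $m'\ge1$: the rank difference of two squares in the same row is nonnegative when the left square is taken first, and it is $0$ exactly when they are linked. Lemma~\ref{lem:RDSquares2} then gives $(i-1,j_k)\in D(w)$, together with a square $(i-m',j_k)$ linked to $(i,j_1)$. This establishes (a) for $A'^{(i-1)}_R(w)$. It also shows $i\ge2$ whenever $m\ge2$, so the procedure never tries to move squares out of the grid.

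Property (b) is preserved because squares only move vertically within their columns. The new concern is a collision: we must check that no $(i-1,j_k)$ already lies in $A'^{(i)}_R(w)$. If one did, that square $(i-1,j_k)$ and $(i,j_k)$ would both belong to the set in column $j_k$, contradicting (b). Property (c) now holds for row $i$, since only $(i,j_1)$ remains there.

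The delicate step is re-establishing (d) in row $i-1$. The squares moved up from row $i$ stay pairwise unlinked, because moving every square up by one row keeps each rank difference the same while changing the row difference by zero. Here I would use that ranks are unchanged in column $j_k$ between rows $i-1$ and $i$ when $w(i-1)>j_k$, which is part of the conclusion of Lemma~\ref{lem:RDSquares2}. A moved square and an original row-$(i-1)$ square of $A_R(w)$ cannot be linked. If they were, then by transitivity of linking (linking classes are equivalence classes) and Lemma~\ref{lem:RDSquares3}, two squares of $A_R(w)$ would lie in one linking class, which its construction forbids. Each moved square stays linked to its ancestor in $A_R(w)$: the column-$j_k$ segment between rows $i-1$ and $i$ lies in $D(w)$, and by Lemma~\ref{lem:RDSquares3} and the rank bookkeeping it lies within one linking class.

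I expect the main obstacle to be exactly this verification of (d). It requires controlling ranks down a column of a vexillary Rothe diagram, and this is where the $2143$-avoidance must be used again, in the same style as the proofs of Lemmas~\ref{lem:RDSquares1}--\ref{lem:RDSquares3}. Once the invariant is in place, taking $i=1$ gives $A'_R(w)=A'^{(1)}_R(w)\subset D(w)$. Property (c) handles every row above $1$, and row $1$ is cleared at the step $i=1$: there, $m\ge2$ would force a square into row $0$, which we showed is impossible. Hence each row of $A'_R(w)$ contains at most one square.
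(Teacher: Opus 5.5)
\textbf{There is a genuine gap: your invariant (d) is false.}

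Your route is the paper's: downward induction, using Lemmas~\ref{lem:RDSquares1} and~\ref{lem:RDSquares2} to show that the non-first squares of row $i$ can move up. You correctly noticed that those lemmas need the squares in row $i$ of $A'^{(i)}_R(w)$ to be unlinked. The paper's two-line argument passes over this hypothesis. But the invariant (d) you introduce to supply it does not hold, so the inductive step is not justified.

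\emph{The second clause of (d) fails for every moved square.} If $(i-1,j),(i,j)\in D(w)$, then $w(i-1)>j$, so $r_{D(w)}(i-1,j)=r_{D(w)}(i,j)$ while the rows differ by one. Hence the two squares are never linked. This is the same rank bookkeeping you use to show that moved squares stay pairwise unlinked, and it shows just as well that every move changes the linking class. For example, for $w=1624735$ the square $(5,3)\in A_R(w)$ moves to $(4,3)$, and $i-r_{D(w)}(i,j)$ equals $3$ and $2$ respectively. So your claim that a moved square stays linked to its ancestor is false. With it goes your argument that a moved square cannot be linked to an original square of $A_R(w)$ in row $i-1$.

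\emph{The first clause of (d) fails too.} Take the Grassmannian (hence vexillary) permutation $w=3467125$.
\begin{itemize}
\item $D(w)$ has rows $1,2$ equal to $\{1,2\}$ and rows $3,4$ equal to $\{1,2,5\}$.
\item The values of $i-r_{D(w)}(i,j)$ are: $4$ at $(4,1)$ and $(4,2)$; $2$ at $(4,5)$; $3$ at $(3,1)$ and $(3,2)$; $1$ at $(3,5)$.
\item One checks $A_R(w)=\{(4,5),(4,2),(3,1)\}$.
\item At $i=4$ we have $(4,5)\prec_R(4,2)$, so $(4,2)$ moves to $(3,2)$.
\item Row $3$ of $A'^{(3)}_R(w)$ is now $\{(3,2),(3,1)\}$. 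Both squares have value $3$, so they are linked.
\end{itemize}
Here $(3,1)$ still has to move. The conclusion $(2,1)\in D(w)$ does hold, but Lemmas~\ref{lem:RDSquares1} and~\ref{lem:RDSquares2} cannot give it.

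\textbf{What is actually missing is an argument for the linked case.} Only linkedness with the $\prec_R$-first square $(i,j_1)$ matters. If $(i,j_k)$ is unlinked from $(i,j_1)$, the two lemmas apply to that pair, even if $(i,j_k)$ is linked to some other square in the row.

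If $(i,j_k)$ is linked to $(i,j_1)$, no $w(k')$ with $k'<i$ lies strictly between $j_k$ and $j_1$. Since also $w(i-1)\notin\{j_k,j_1\}$, it follows that $(i-1,j_k)\in D(w)$ if and only if $(i-1,j_1)\in D(w)$. So you need an invariant guaranteeing that the first square can itself climb one more row whenever it is linked to another square of its row.

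In the example this holds because $(3,2)$ arrived via Lemma~\ref{lem:RDSquares2} applied to $(4,2),(4,5)$. That pair has rank gap $2$, which already gives $(2,2)\in D(w)$. Recording this kind of remaining climbing room for moved squares is one natural way to fill the gap. As written, (d) does not survive the moves, and your proof is incomplete.
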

\begin{proof}
    By definition, $A'^{(n)}_R(w)\subset D(w)$.  Suppose that $1\leq i <n$ and $A'^{(i+1)}_R(w)\subset D(w)$.  If $A'^{(i+1)}_R(w)$ has at most one square in row $i+1$, then $A'^{(i)}_R(w)=A'^{(i+1)}_R(w) \subset D(w)$.  Otherwise, let $\{(i,j_1),(i,j_2),\ldots,(i,j_m)\}$ be the set of squares in row $i+1$ of $A'^{(i+1)}_R(w)$.  In this case, Lemmas \ref{lem:RDSquares1} and \ref{lem:RDSquares2} guarantee that $(i-1,j_2),\ldots,(i-1,j_m)\in D(w)$; therefore, $A'^{(i)}_R(w)\subset D(w)$.  We conclude by induction that $A'_R(w):= A'^{(1)}_R(w) \subset D(w)$.

    By definition, each of rows $2$ through $n$ must contain at most one square of $A_R'(w)$.  Furthermore, row $1$ of $A_R'(w)$ must contain at most one square; otherwise, Lemmas \ref{lem:RDSquares1} and \ref{lem:RDSquares2} would require that all but one square in row $1$ of $A_R'(w)$ have squares directly above them in $D(w)$, contradicting the fact that row $1$ is the top row.
\end{proof}

\begin{lem}
    \label{lem:NewDiagAlt}
Fix any vexillary $w \in S_n$ and $1\leq i_0 \leq n$.  Let $C \subset D(w)$ be any linking class which contains squares in row $i_0$.  If row $i_0$ of $C$ contains any square $(i_0,j)$ such that $(i',j)\notin A'^{(i_0)}_R$ for all $i<i'$, then the first such square $(i_0,j_0)$ in the $\prec_R$ ordering must satisfy $(i_0,j_0)\in A'^{(i_0)}_R$.
\end{lem}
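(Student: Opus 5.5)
The plan is a downward induction on $i_0$, running in parallel with the construction of $A'^{(i)}_R(w)$ in Definition~\ref{def:NewDistSquares}. I read the hypothesis "$(i',j)\notin A'^{(i_0)}_R$ for all $i'>i_0$" as saying that column $j$ has no distinguished square strictly below row $i_0$. The key observation is that rows below $i_0$ are frozen once we reach stage $i_0$: passing from $A'^{(i)}_R$ to $A'^{(i-1)}_R$ only changes rows $i$ and $i-1$. Hence the squares of $A'^{(i_0)}_R$ strictly below row $i_0$ are exactly those of $A'_R(w)$ there. The squares in row $i_0$ itself are of two kinds: squares of $A_R(w)$ originally in row $i_0$, and squares pushed up from row $i_0+1$ at stage $i_0+1$.

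\textbf{Base case and setup.} For $i_0=n$ we have $A'^{(n)}_R=A_R(w)$. The claim then follows from the greedy definition of $A_R(w)$. Let $(n,j_0)$ be the first square of $C$ in row $n$ in the $\prec_R$ order. It is rejected only if $A_R(w)$ already contains a square in its column or in its linking class $C$. A square in column $j_0$ would have to be earlier in $\prec_R$, hence in row $n$, which is impossible. A square of $C$ earlier in $\prec_R$ would also have to be in row $n$ and precede $(n,j_0)$, contradicting minimality. So $(n,j_0)\in A_R(w)$.

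\textbf{General case.} For general $i_0$, let $(i_0,j_0)$ be the first square of $C$ in row $i_0$ whose column contains no square of $A'^{(i_0)}_R$ below row $i_0$. Suppose $(i_0,j_0)\notin A_R(w)$. By the greedy rule, some square $s\in A_R(w)$ with $s\prec_R(i_0,j_0)$ lies either in column $j_0$ or in class $C$; I will track where $s$ ends up under the pushing procedure.

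If $s$ lies in column $j_0$, it is below row $i_0$ in $A_R(w)$. It can move only upward, one row at a time, when it is not the $\prec_R$-first square in its row. The hypothesis says it is no longer strictly below row $i_0$ at stage $i_0$, so it was pushed from row $i_0+1$ to $(i_0,j_0)$. In that case $(i_0,j_0)\in A'^{(i_0)}_R$ and we are done.

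If $s\in C$, I use Lemmas~\ref{lem:RDSquares2} and~\ref{lem:RDSquares3} to control the geometry of $C$: linked squares occupy columns whose intermediate rows lie in $D(w)$, and unlinked squares in the same row are related by Lemma~\ref{lem:RDSquares1}. If $s$ is in row $i_0$ and precedes $(i_0,j_0)$, minimality of $j_0$ forces column $j_s$ to contain an $A'^{(i_0)}_R$ square below row $i_0$. But $s\in A_R(w)$ also sits in column $j_s$, and a column of $A_R(w)$ has at most one square. Pushing does not put two squares in one column, since Lemma~\ref{lem:RDSquares2} shows the pushed squares are of different classes from the row's first square. So this case is contradictory. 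If $s$ lies below row $i_0$, then by the induction hypothesis applied to row $i_0+1$ and the class $C$, $s$ (or a linked replacement) either stays below, occupying a column, or is pushed into row $i_0$ in column $j_0$. I will need to argue that it is in fact pushed exactly into $(i_0,j_0)$.

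\textbf{Main obstacle.} The hardest step is this last one: showing that a square of $C$ pushed up from row $i_0+1$ lands precisely at the first eligible square $(i_0,j_0)$ of $C$ in row $i_0$. I would first use Lemma~\ref{lem:RDSquares2}, which gives that the pushed square $(i_0+1,j)$, with $j$ larger in the $\prec_R$ sense, moves to $(i_0,j)$ in a class shifted by one rank. Then I would use Lemma~\ref{lem:RDSquares1} to compare $\prec_R$ orders within row $i_0$ and rule out an eligible square of $C$ preceding it. This is the delicate case analysis I expect to consume most of the work.
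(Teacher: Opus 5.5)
Your easy cases are fine and match the paper's opening reductions. Rows below $i_0$ are frozen at stage $i_0$. A blocker in column $j_0$ must already have been pushed to $(i_0,j_0)$. A blocker in row $i_0$ of $C$ contradicts minimality. The gap is the case you defer: a blocker $s=(i',j')\in C\cap A_R(w)$ with $i'>i_0$. This case is the whole content of the lemma, and the goal you set for it cannot be reached.

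Linkage forces $j'>j_0$. If $j'\le j_0$, then $k=i_0$ is one of the indices $i_0\le k<i'$ with $w(k)>j_0\ge j'$, so $r_{D(w)}(i',j')-r_{D(w)}(i_0,j_0)\le i'-i_0-1$, and the squares would not be linked. Pushing only moves squares upward within their own column. Hence $s$ can never arrive at $(i_0,j_0)$, and neither can a ``linked replacement'' in $C$ lying below row $i_0$, by the same inequality. For example, take $w=1624735$, $i_0=4$ and $C=\{(4,3),(5,5)\}$. The class-blocker $(5,5)$ stays put. The square that reaches $(4,3)$ is $(5,3)$, which lies in a different linking class.

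Columns are preserved and each column of $A_R(w)$ has at most one square. So the lemma is equivalent to the following: if $(i_0,j_0)\notin A_R(w)$, then column $j_0$ contains a square of $A_R(w)$ strictly below row $i_0$. In your class-blocked case you must therefore show that $(i_0,j_0)$ is \emph{also} column-blocked, by a square of some other class that you have not located. Tracking the single square $s$ gives no mechanism for this. It needs a global count of how the greedy algorithm fills the connected component $K$ of $D(w)$ containing row $i_0$ of $C$.

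This is what the paper does. It works on the block of rows $i_0,\dots,i_{\text{max}}$ in which $K$ has the same number of squares. Rank is constant on $K$, so each row of $K$ lies in its own linking class.
\begin{itemize}
\item Besides $s$, there are $m$ foreign linked squares, each keeping $A_R(w)$ out of one of $m$ rows of the block. Together with $s$, these force at least $m+1$ of the $l=i_{\text{max}}-i_0$ rows below $i_0$ to contain no square of $K\cap A'^{(i_0)}_R$.
\item At least $l-m$ squares of $K\cap A'^{(i_0)}_R$ must sit in rows $i_0,\dots,i_{\text{max}}$.
\item Each row below $i_0$ holds at most one such square, so pigeonhole puts one in row $i_0$.
\end{itemize}
Without an argument of this kind, your proposal does not prove the lemma.
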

\begin{proof}
    If every square $(i_0,j)$ in row $i_0$ of $C$ has a square of $A'^{(i_0)}_R$ below it in column $j$, then the result holds trivially.  Furthermore, if row $i_0$ of $C$ contains an element of $A_R(w)$, the result holds by construction.  We therefore suppose that neither of these conditions hold.  

    In this case, $C$ must contain a square $(i_0',j_0')$ from $A_R(w)$ with $i_0<i_0'$.  This square must be in a column strictly to the right of all squares in row $i_0$ of $C$.  Let $K$ denote the connected component of $D(w)$ containing the elements in row $i_0$ of $C$, and let $i_{\text{max}}$ be as large as possible such that rows $i_0$ and $i_{\text{max}}$ contain the same number of squares in $K$.  By Lemma \ref{lem:RDSquares3}, $i_0<i_0'\leq i_{\text{max}}$.  Let $i_1,\ldots, i_m$ denote the rows between $i_0$ and $i_{\text{max}}$ in which $K$ does not contain an element of $A_R(w)$.  We will suppose that $i_0<i_1<\ldots< i_m\leq i_{\text{max}}$.  Set $l=i_{\text{max}}-i_0$.

    Note that for each of $i_1,\ldots, i_m$, there must exist squares $(i_1',j_1'),\ldots, (i_m',j_m')\in A_R(w)\setminus K$ such that for each $1\leq a \leq m$ and every $(i_a,b)$ in $K$, $(i_a',j_a')$ is linked to $(i_a,b)$ and $(i_a',j_a')\prec_R(i_a,b)$.  By the definitions of linking classes and $\prec_R$, this implies that for all $1\leq a \leq m$, $i_a<i_a'$ and $(i-a',j_a')$ is in a column strictly to the right of all squares in row $i_a$ of $K$.  Observe that by Lemma \ref{lem:RDSquares3}, $i_1',\ldots,i_m' \leq i_{\text{max}}$.  Furthermore, observe that $(i_1',j_1'),\ldots, (i_m',j_m')$ are all distinct since they are in all in different linking classes.

    By Lemma \ref{lem:RDSquares1}, if $i_0<r\leq i_{\text{max}}$, $(r,j)\in K \cap A'^{(r)}_R$, and $(r,j'_a)\in A'^{(r)}_R$ for some $0\leq a \leq m$, then $(r,j'_a)\prec_R (r,j)$ and $(r,j) \notin A'^{(i_0)}_R$.  There must therefore be at least $m+1$ rows $i_0<r\leq i_{\text{max}}$ in which $K \cap A'^{(i_0)}_R$ is empty.  
    Note that $K\cap A'^{(i_0)}_R$ must contain at least $l-m=i_{\text{max}}-i-0-m$ squares between rows $i_0$ and $i_{\text{max}}$.  Specifically, at least the first $l-m=i_{\text{max}}-i-0-m$ squares (in the $\prec_R$ order) in row $i_0$ of $K$ which do not satisfy $(r,j)\in A'^{(i_0)}_R$ for any $i_{\text{max}}<r$ must satisfy $(r,j)\in A'^{(i_0)}_R$ for some $i_0\leq r \leq i_{\text{max}}$.  Since rows $i_0+1$ through $i_{\text{max}}$ can contain at most one square of $A'^{(i_0)}_R$ (by construction), at least one of these squares must be in row $i_0$ of $K \cap A'^{(i_0)}_R$.  By definition of $K$, this square is in the linking class $C$.
    \end{proof}

\begin{lem}
    \label{lem:pushing-up}
    Fix any vexillary $w \in S_n$, and fix a set of distinguished live squares $A \subset D(w)$.  Suppose that $(i,j) \in A$ and $(i-1,j)\in D(w)$, and let $A'=A-\{(i,j)\}\cup\{(i-1,j)\}$.  Then the set 
    $\{D \colon (D,F_1,A_1) \in \SBD(D(w),\emptyset,A) \text{ for some } F_1,A_1\}$ is identical to $\{D \colon (D,F_2,A_2) \in \SBD(D(w),\emptyset,A') \text{ for some $F_2$,$A_2$}\}$.
    Furthermore, $D_{\text{top}}(D,A)=D_{\text{top}}(D,A')$.
\end{lem}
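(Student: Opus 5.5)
The plan is to prove the set equality by exhibiting move-sequence simulations in both directions. Write $\mathcal D_0=(D(w),\emptyset,A)$ and $\mathcal D_0'=(D(w),\emptyset,A')$. These two dead square diagrams have the same underlying diagram $D(w)$ and the same empty dead set. Their distinguished sets differ only in column $j$: $A$ contains $(i,j)$, while $A'$ contains $(i-1,j)$.

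First I would check that we may assume no other square of $A$ lies in column $j$. This is implicit in the setting, since we always use sets with at most one distinguished square per column. Then the only difference between $\mathcal D_0$ and $\mathcal D_0'$ is which of the two vertically adjacent squares $(i-1,j),(i,j)\in D(w)$ carries the distinguished marker. All other columns evolve independently under bubbling and K-bubbling moves, because every move acts within a single column. So it suffices to compare the sets of configurations reachable in column $j$ alone, where we record the set of occupied squares and forget $F$ and $A$.

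\textbf{Inclusion $\supseteq$.} Start from $\mathcal D_0'$. In column $j$ the distinguished square sits at $(i-1,j)$ directly above the ordinary live square $(i,j)$. From $\mathcal D_0$ we would like to "transfer" the marker upward without changing the diagram. The key local observation is this. In $\mathcal D_0$, any sequence of moves in column $j$ must first bubble $(i-1,j)$ (an ordinary live square) before $(i,j)$ can move at all. So I would argue by a matching of move sequences in which the two squares exchange roles. Moves of the top square in one model correspond to moves of the top square in the other, and whenever the distinguished square K-bubbles in $\mathcal D_0'$, leaving a dead copy, the same effect in $\mathcal D_0$ is obtained as follows. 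First bubble the ordinary top square up into the target position. Then bubble the distinguished square from $(i,j)$ to $(i-1,j)$. Finally K-bubble it up one step, leaving a dead copy. The resulting occupied sets coincide. More cleanly, I would characterize the reachable column configurations explicitly: the occupied squares are obtained from the original column by moving live squares up without passing one another, and additionally filling in a contiguous block of dead squares below the final position of the distinguished square, down to its starting position. The claim then reduces to showing that starting the marker at $(i,j)$ versus $(i-1,j)$ yields the same family of occupied sets.

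\textbf{Inclusion $\subseteq$.} Here the identity is the nontrivial point. Marking the lower square lets K-bubbling leave a dead copy at $(i,j)$ or below while the upper square moves independently. In the $A'$ model, the same occupied set arises by moving the marked upper square with a K-bubble, then bubbling the ordinary lower square into the vacated slot. I expect this step to be the main obstacle. Squares cannot pass each other, so the relative order of the distinguished square among the live squares differs in the two models, and I must verify carefully, via the explicit characterization above, that the contiguous dead blocks can always be reproduced. I would handle it by induction on the number of moves, tracking the configuration of column $j$ as a sorted list of live positions plus one dead interval.

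Finally, the equality $D_{\text{top}}(D,A)=D_{\text{top}}(D,A')$ is immediate from Definition~\ref{defn:Dtop}. The two sets $D\cup\{(r,j):r<i\}$ and $D\cup\{(r,j):r<i-1\}$ differ only possibly in $(i-1,j)$, which already lies in $D=D(w)$ by hypothesis.
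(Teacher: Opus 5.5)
Your reduction to column $j$ is fine: moves and the dead-square condition are column-local. Your argument for $D_{\text{top}}(D,A)=D_{\text{top}}(D,A')$ is also fine. Assuming at most one distinguished square per column matches what the paper implicitly uses.

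\textbf{The gap.} The column-$j$ comparison rests on a false characterization. A distinguished square may also make ordinary bubbling moves. So its dead copies form an \emph{arbitrary} subset of $\{a+1,\dots,s\}$, where $a$ is its current row and $s$ its starting row. They need not be contiguous, need not abut the distinguished square, and need not reach down to $s$.

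With your ``one dead interval'' description the lemma would actually fail. Suppose column $j$ of $D(w)$ consists of rows $i-1,i$, with rows $i-3,i-2$ empty.
\begin{itemize}
\item The occupied set $\{i-3,i-1,i\}$ fits your description for $A$: the ordinary square goes to $i-3$, the distinguished square to $i-1$, with dead block $\{i\}$.
\item It does not fit your description for $A'$. There, once a dead square appears, you only produce contiguous sets $\{c',\dots,i\}$.
\item Yet it is genuinely reachable from $A'$: K-bubble $(i-1,j)$ to $(i-2,j)$, then bubble it to $(i-3,j)$.
\end{itemize}
So an induction tracking ``sorted live positions plus one dead interval'' would be proving a false statement.

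Your local simulations also fail.
\begin{itemize}
\item In one direction: after bubbling the ordinary square to $(i-2,j)$ and the distinguished square to $(i-1,j)$, the final K-bubble needs $(i-2,j)$ empty, and it is not.
\item In the other direction: a K-bubble leaves a dead square in the slot it vacates, so the lower ordinary square cannot follow into it.
\end{itemize}

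\textbf{The missing idea.} The paper uses a role swap on \emph{final} configurations, not a move-by-move matching. Take $(D,F_1,A_1)$ reachable from $A$ with dead squares in column $j$. Let $(a,j)\in A_1$, let $(b,j)$ be the lowest dead square in column $j$, and let $(c,j)$ be the nearest square of $D$ above $(a,j)$; this is the live square that started at $(i-1,j)$. Then
\[
\bigl(D,\;F_1\cup\{(a,j)\}\setminus\{(b,j)\},\;A_1\cup\{(c,j)\}\setminus\{(a,j)\}\bigr)
\]
is reachable from $A'$, for the following reasons.
\begin{itemize}
\item The marker travels from $(i-1,j)$ to $(c,j)$, leaving dead squares exactly at $(F_1\setminus\{(b,j)\})\cup\{(a,j)\}$.
\item This set lies in rows $c+1,\dots,i-1$, because $c<a<b\le i$.
\item The ordinary square from $(i,j)$ follows up to $(b,j)$, which is below all of these dead squares.
\end{itemize}

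The inverse swap gives the other inclusion. There the $A'$-marker becomes ordinary, the highest dead square becomes the marker, and the live square just below the dead squares becomes dead. When column $j$ has no dead squares, only the marker's position changes.

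Verifying realizability in both directions needs the correct description of reachable column configurations: arbitrary dead subsets, with originally-lower squares stuck below the lowest dead square.
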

\begin{proof}
Fix any $(D,F_1,A_1) \in \SBD(D(w),\emptyset,A)$.  If column $j$ of $F_1$ is empty, then $(D,F_1,A_1) \in \SBD(D(w),\emptyset,A')$.  Otherwise, let $(a,j)$ be the unique square in column $j$ of $A_1$, and let $(b,j)$ be the lowest square in column $j$ of $F_1$.  Let $(c,j)$ be the lowest square in column $j$ of $D$ such that $c<a$. Then $(D,F_2,A_2) \in \SBD(D(w),\emptyset,A')$ where $F_2=F_1\cup\{(a,j)\}\setminus\{(b,j)\}$ and $A_2=A_1\cup \{(c,j)\}\setminus\{(a,j)\}$. 

Conversely, fix any $(D,F_2,A_2) \in \SBD(D(w),\emptyset,A')$.  If column $j$ of $F_2$ is empty, then $(D,F_2,A_2) \in \SBD(D(w),\emptyset,A')$.  Otherwise, let $(a,j)$ be the highest square in column $j$ of $F_2$, and let $(b,j)$ be the highest square in column $j$ of $D\setminus F_2$ such that $a<b$.  Let $(c,j)$ be the unique square in column $j$ of $A_2$.  Then $(D,F_1,A_1) \in \SBD(D(w),\emptyset,A)$ where $F_1=F_2\cup\{(b,j)\}\setminus\{(a,j)\}$ and $A_1=A_2\cup \{(a,j)\}\setminus\{(c,j)\}$. 

The fact that $D_{\text{top}}(D,A)=D_{\text{top}}(D,A')$ follows directly from Definition \ref{defn:Dtop}.
\end{proof}

We are now ready to prove Theorem \ref{thm:SBD-1PerRow}.

\noindent\textit{Proof of Theorem \ref{thm:SBD-1PerRow}.} By Lemmas \ref{lem:NewDiagramWellDef} and \ref{lem:NewDiagAlt}, we can see that $A_{\text{new}}(w))=A'_{R}(w)$.  The result then follows immediately by Lemma \ref{lem:pushing-up} and the construction of $A'_{R}(w)$.
\qed

As a consequence of Theorem \ref{thm:SBD-1PerRow}, we now prove Theorem \ref{thm:TopVexChar}.  

\begin{lem}[\cite{pechenik2022k}]
    \label{lem:Vex-to-Las}
    For every vexillary $w\in S_n$, ${\mathfrak{G}}_w={\mathfrak{L}}_{\alpha}$ where $\alpha_i:=|\{j:i<j,w(i)>w(j)\}|$ for all $1\leq i \leq n$.
\end{lem}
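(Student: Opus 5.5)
The plan is to deduce the equality from the two orthodontia formulas, Theorem~\ref{thm:GrothOrthodontia} and Theorem~\ref{thm:LascOrthodontia}, together with Corollary~\ref{cor:ortho_permute}. Concretely, I will show that for vexillary $w$ the Rothe diagram $D(w)$ is a column permutation of the skyline diagram $D_{\text{Sky}}(\alpha)$, where $\alpha$ is the Lehmer code in the statement. Granting this, the chain
\[
\mathfrak G_w=\mathscr G_{D(w)}=\mathscr G_{D_{\text{Sky}}(\alpha)}=\mathfrak L_\alpha
\]
follows immediately from those three results.

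First I would check the weight. Row $i$ of $D(w)$ consists of the $j<w(i)$ with $w^{-1}(j)>i$. These correspond exactly to the indices $j'>i$ with $w(j')<w(i)$. So $\wt(D(w))_i=\alpha_i$, which is the number of squares in row $i$ of $D_{\text{Sky}}(\alpha)$.

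The key structural step is the classical fact (Fulton) that for a $2143$-avoiding permutation the column sets of $D(w)$ are totally ordered by inclusion. I would prove it directly. Suppose columns $j_1<j_2$ (the case $j_1>j_2$ is symmetric) have incomparable sets, witnessed by rows $r\in D(w)_{j_1}\setminus D(w)_{j_2}$ and $s\in D(w)_{j_2}\setminus D(w)_{j_1}$. Unwinding the definition of the Rothe diagram, each failure of membership comes from either a dot to the left in that row or a dot above in that column. A case analysis on these possibilities should produce four positions whose values form a $2143$ pattern, in the same spirit as the argument in Lemma~\ref{lem:RDSquares1}. This case analysis is the step I expect to need the most care, since some configurations give $132$-type patterns that must be excluded separately by the diagram conditions rather than by vexillarity.

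Given the chain condition, I reorder the columns of $D(w)$ in weakly decreasing order of their column sets with respect to inclusion, breaking ties arbitrarily and placing empty columns last. In the reordered diagram, if row $i$ lies in some column, it lies in every earlier column, because earlier column sets contain later ones. Hence each row $i$ occupies an initial segment of columns, of length $\alpha_i$. The reordered diagram is therefore exactly $D_{\text{Sky}}(\alpha)$, so $D(w)$ is a column permutation of $D_{\text{Sky}}(\alpha)$. Corollary~\ref{cor:ortho_permute} then gives $\mathscr G_{D(w)}=\mathscr G_{D_{\text{Sky}}(\alpha)}$, which completes the argument.
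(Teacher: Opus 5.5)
Your argument is correct. It necessarily differs from the paper's, because the paper gives no proof of this lemma and simply imports it from \cite{pechenik2022k}.

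You instead derive it from the paper's own orthodontia toolkit:
\begin{itemize}
\item $\mathfrak G_w=\mathscr G_{D(w)}$ (Theorem~\ref{thm:GrothOrthodontia});
\item invariance of $\mathscr G$ under column permutations of skyline diagrams (Corollary~\ref{cor:ortho_permute});
\item $\mathscr G_{D_{\text{Sky}}(\alpha)}=\mathfrak L_\alpha$ (Theorem~\ref{thm:LascOrthodontia}).
\end{itemize}
These are fed by the column-sorting fact that the paper itself invokes in the proof of Theorem~\ref{thm:TopVexChar}. Your argument is exactly the vexillary specialization of the one the paper gives for Theorem~\ref{thm:topdeg-almostvex}. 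There is no circularity, since Corollary~\ref{cor:ortho_permute} does not use this lemma.

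What your route buys is self-containment and a uniform treatment of the vexillary and almost vexillary cases. The price is that the proof now rests on Corollary~\ref{cor:ortho_permute}, whose proof in the paper is only sketched (via Lemma~\ref{lem:ortho_props} and the braid relations). The citation, by contrast, rests on an independently published result.

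The case analysis you flag as the delicate step is actually immediate, and no $132$-type configurations arise. Take $j_1<j_2$, $r\in D(w)_{j_1}\setminus D(w)_{j_2}$ and $s\in D(w)_{j_2}\setminus D(w)_{j_1}$.
\begin{itemize}
\item Since $w(s)>j_2>j_1$, the only way to have $s\notin D(w)_{j_1}$ is $s>w^{-1}(j_1)$. Hence $r<w^{-1}(j_1)<s<w^{-1}(j_2)$.
\item In particular $r<w^{-1}(j_2)$, so $r\notin D(w)_{j_2}$ forces $w(r)<j_2$.
\end{itemize}
Thus the positions $r<w^{-1}(j_1)<s<w^{-1}(j_2)$ carry the values $w(r),j_1,w(s),j_2$ with $j_1<w(r)<j_2<w(s)$. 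This is a $2143$ pattern, so the column sets of $D(w)$ form a chain whenever $w$ is vexillary. Alternatively, you could just cite the classical characterization the paper quotes from \cite{manivel2001symmetric}.
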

\newtheorem*{thm:TopVexChar}{Theorem~\ref{thm:TopVexChar}}
\begin{thm:TopVexChar} 
    For any vexillary permutation $w$, $\hat{\mathfrak{G}}_w$ equals an integer multiple of $\chi_{D_{\text{top}}(w)}$.
\end{thm:TopVexChar}
\begin{proof}
    Fix some vexillary $w \in S_n$, and let $\alpha=(\alpha_1,\ldots,\alpha_n)$ where $\alpha_i:=|\{j:i<j,w(i)>w(j)\}|$ for all $1\leq i \leq n$.  Observe that by the properties of $D(w)$ for vexillary permutations, there exists a permutation of the columns of $D(w)$ which sorts the columns by inclusion (i.e. the columns are ordered such that for every $1<j\leq n$, every row with a square in column $j$ also has a square in column $j-1$).  Note that this permutation of the columns maps $D(w)$ to $D_{\text{Sky}}(\alpha)$.  By Definitions \ref{def:snow-diagram}, \ref{defn:Dtop}, and \ref{defn:DistSquares-1PerRow}, this same permutation of columns takes $D_{\text{top}}(D(w),A_{\text{new}}(w))$ to $\text{snow}(D_{\text{Sky}}(\alpha))$.

    By Theorem \ref{thm:SBD-1PerRow}, Lemma \ref{lem:pushing-up} and the construction of $A'_R$, we see that $D_{\text{top}}(D(w),A_{R}(w))$ differs from $\text{snow}(D_{\text{Sky}}(\alpha))$ by a permutation of columns.  Remark \ref{rem:LeftRightDTop} further implies that $D_{\text{top}}(w)$ differs from $\text{snow}(D_{\text{Sky}}(\alpha))$ by a permutation of columns.  By definition, $D_{\text{top}}(w)$ and $\text{snow}(D_{\text{Sky}}(\alpha))$ must therefore have the same dual character.  We conclude by Theorem \ref{thm:SnowDiagramChar} and Lemma \ref{lem:Vex-to-Las} that $\hat{\mathfrak{G}}_w$ is an integer multiple of $\chi_{D_{\text{top}}(w)}$.
\end{proof}

\section{Bubbling Diagrams for Other Permutations}
\label{sec:extendedBubb}
It is well known that vexillary permutations are precisely those whose Rothe diagrams are column-permutations of skyline diagrams \cite{manivel2001symmetric}.  Inspired by this, we define a generalization of vexillary permutations.  For a fixed $w\in S_n$, let $D_{\_}(w)$ denote the diagram obtained by replacing all packed columns of $D(w)$ with empty columns.  We say $w$ is \textbf{almost vexillary} if $D_{\_}(w)$ is a column-permutation of a skyline diagram for some weak composition.  Equivalently, almost vexillary permutations can be described by the following pattern avoidance condition.
\begin{thm}
    \label{thm:almostvex-pattern}
    A permutation is almost vexillary if and only if it avoids the patterns $13254$, $315264$, and $316254$.
\end{thm}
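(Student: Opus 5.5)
We prove both directions by relating the diagram condition to small configurations in $D(w)$. The starting point is the characterization quoted just before the statement: $w$ is vexillary if and only if $D(w)$ is a column permutation of a skyline diagram. Equivalently, since Rothe diagrams are \%-avoiding, $D(w)$ contains no two columns $j,j'$ whose row sets are incomparable under inclusion. The same reformulation applies to $D_{\_}(w)$. Deleting packed columns preserves the \%-avoiding property, because it only removes whole columns. So $w$ is almost vexillary if and only if any two \emph{non-packed} columns of $D(w)$ have row sets comparable under inclusion. We first check that a \%-avoiding diagram whose column row sets form a chain is indeed a column permutation of a skyline diagram: sort the columns by decreasing row set, and the rows become left-justified. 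Conversely, in a column permutation of a skyline diagram the row sets are nested.

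\textbf{Necessity.} Each of $13254$, $315264$, $316254$ is itself not almost vexillary, which we verify directly:
\begin{itemize}
\item For $13254$, the diagram is $\{(2,2),(4,4)\}$. Columns $2$ and $4$ each have a missing tooth, and their row sets $\{2\}$ and $\{4\}$ are incomparable.
\item For $315264$ and $316254$, we compute the Rothe diagrams in the same way and exhibit two non-packed columns with incomparable row sets.
\end{itemize}
To pass from a pattern to a containing permutation $w$, take an occurrence of the pattern in $w$. The two offending diagram squares correspond to squares of $D(w)$ whose row and column are given by entries of the occurrence. Incomparability of the two row sets is witnessed by rows and columns of the pattern, so it persists in $D(w)$. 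Being non-packed is also witnessed inside the pattern: a missing tooth in column $c$ comes from a hook of some entry passing between two diagram squares of that column. These witnesses lift to $w$, so the corresponding columns of $D(w)$ are non-packed. This lifting is the standard pattern-containment argument used for vexillary permutations, and we only need to check it for the three patterns.

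\textbf{Sufficiency.} This is the main obstacle. Suppose $w$ has two non-packed columns $j<j'$ of $D(w)$ with incomparable row sets. Then there are rows $a,b$ with $(a,j)\in D(w)$, $(a,j')\notin D(w)$, and $(b,j')\in D(w)$, $(b,j)\notin D(w)$. Since $w$ fails to be vexillary, it contains $2143$, so the task is to extract one of the three patterns using the extra non-packedness.

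We plan to split into cases according to the relative order of $a$ and $b$, and whether the square is absent because the column's own dot lies above the row ($w^{-1}(j)<b$) or because a horizontal hook crosses it ($w(b)<j$). This yields a $2143$ occurrence, which we write as $w(p)w(q)w(r)w(s)$. The remaining argument uses the missing teeth. A missing tooth in column $j$ (resp. $j'$) gives a row $t$ with $w(t)<j$ lying between two diagram rows of that column. It therefore contributes an extra entry, smaller than the column value, placed between the relevant positions. Adding one or two such entries to the $2143$ occurrence and standardizing, we expect to land on $13254$ when a single missing tooth suffices to destroy packedness of both columns simultaneously, and on $315264$ or $316254$ when two separate teeth are needed. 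The two longer patterns differ according to the relative order of the two inserted small values.

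The delicate part is the case analysis. We must show that every configuration of the two missing teeth relative to $p,q,r,s$ either produces one of the three patterns or contradicts the choice of $j,j'$. For the latter, we would choose $j,j'$ and the rows $a,b$ extremally, for instance closest together, so that degenerate placements are ruled out.
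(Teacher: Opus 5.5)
Your reformulation (almost vexillary iff any two non-packed columns of $D(w)$ have nested row sets) and your necessity argument are fine, and they match the paper, which also calls that direction straightforward. One small correction there: a column is non-packed as soon as some square of it has a non-square above it, so the relevant hook need not pass \emph{between} two squares. For example, column $2$ of $D(13254)=\{(2,2),(4,4)\}$ has its tooth above its only square.

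The genuine gap is sufficiency. You do not prove it; you describe a case analysis you expect to work, and the structure you anticipate is not what actually happens.
\begin{enumerate}
\item No case split is needed to get the $2143$. With $j<j'$, the four membership conditions force $a<w^{-1}(j)<b<w^{-1}(j')$ and $j<w(a)<j'<w(b)$.
\item Column $j'$ is automatically non-packed, since $(a,j')\notin D(w)$ and $(b,j')\in D(w)$ with $a<b$. So all the extra information lies in a missing tooth of the \emph{left} column $j$, and teeth of column $j'$ play no role.
\item Consequently your dichotomy is wrong. You predict that one tooth serving both columns gives $13254$, and that two separate teeth or two inserted small values give the six-letter patterns. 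But in $315264$ the single entry $1$ already creates teeth in both non-packed columns $2$ and $4$. In $13254$, by contrast, the teeth of columns $2$ and $4$ come from different entries.
\end{enumerate}

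The idea you are missing is that a missing tooth $(t,j)$ contributes two entries, not one. From $(t,j)\notin D(w)$ you get $w(t)<j$. From $(t+1,j)\in D(w)$ you get $w(t+1)>j$ and $t+1<w^{-1}(j)$. Now split only on the position of $t$ relative to $a$ (note $t\neq a$).
\begin{enumerate}
\item If $t<a$, then $w(t),w(a),j,w(b),j'$ is a $13254$.
\item If $t>a$, take the six entries $w(a),w(t),w(t+1),j,w(b),j'$, in this positional order. They form $413265$, $314265$, $315264$ or $316254$ according as $w(t+1)$ lies between $j$ and $w(a)$, between $w(a)$ and $j'$, between $j'$ and $w(b)$, or above $w(b)$. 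The first two contain $13254$.
\end{enumerate}
So $315264$ and $316254$ arise exactly when the tooth of the left column sits below row $a$. That forces you to include the row directly beneath the tooth, and the two patterns are distinguished by the relative order of $w(t+1)$ and $w(b)$. With this observation the proof closes immediately, and no extremal choice of $j,j',a,b$ is needed.
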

\begin{proof}
    If $w \in S_n$ contains a $13254$, $315264$, or $316254$ pattern, it is straightforward to show that $D(w)$ must contain some pair of rows $i_1,i_2$ and non-packed columns $j_1,j_2$ such that $(i_1,j_1),(i_2,j_2) \in D(w)$ but $(i_1,j_2),(i_2,j_1) \notin D(w)$.  This would imply that $w$ is not almost vexillary.

    Conversely, suppose that $w$ is not almost vexillary.  By definition, $D(w)$ must have rows $i_1,i_2$ and non-packed columns $j_1,j_2$ such that $(i_1,j_1),(i_2,j_2) \in D(w)$ but $(i_1,j_2),(i_2,j_1) \notin D(w)$.  Suppose without loss of generality that $j_1<j_2$, and note that by the properties of Rothe diagrams, we must also have $i_1<i_2$.  By the definition of $D(w)$, we know that $j_1<w(i_1)<j_2<w(i_2)$ and $i_1<w^{-1}(j_1)<i_2<w^{-1}(j_2)$.

    Since column $j_1$ is not packed, there must be some $i'$ such that $(i',j_1) \notin D(w)$ but $(i'+1,j_1) \in D(w)$.  Note that this implies $w(i')<j_1$ and $i'+1<w^{-1}(j_1)$.  If $i'<i_1$, then $w(i'),w(i_1),j_1,w(i_2),j_2$ is a $13254$ pattern in the one-line notation of $w$.  Otherwise, if $i_1<i'$, then because $(i'+1,j_1)\in D(w)$, we must have $j_1<w(i'+1)$.  In the latter case, we have $i_1<i'<i'+1<w^{-1}(j_1)<i_2<w^{-1}(j_2)$ and either $w(i')<j_1<w(i'+1)<w(i_1)<j_2<w(i_2)$, $w(i')<j_1<w(i_1)<w(i'+1)<j_2<w(i_2)$, $w(i')<j_1<w(i_1)<j_2<w(i'+1)<w(i_2)$, or $w(i')<j_1<w(i_1)<j_2<w(i_2)<w(i'+1)$.  In other words, $w$ contains at least one of the patterns $413265$, $314265$, $315264$, or $316254$.  Since, $413265$ and $314265$ both contain $13254$ patterns, we conclude that $w$ contains either a $13254$, $315264$, or $316254$ pattern.
\end{proof}
We show that certain results about vexillary permutations can be generalized to this broader family. 
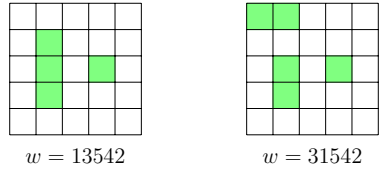
\begin{figure}
    \centering
    \scalebox{.7}{\begin{tikzpicture}
    \filldraw[draw=darkgray, color=green, opacity=.5] (.5,.5) rectangle (1,2);
    \filldraw[draw=darkgray, color=green, opacity=.5] (1.5,1) rectangle (2,1.5);
        \draw[step=.5cm,black,very thin] (0,0) grid (2.5,2.5);
        \node at (1.25,-.4) {$w=13542$};
        \begin{scope}[xshift=4.5cm]
        \filldraw[draw=darkgray, color=green, opacity=.5] (0,2) rectangle (1,2.5);
        \filldraw[draw=darkgray, color=green, opacity=.5] (.5,.5) rectangle (1,1.5);
        \filldraw[draw=darkgray, color=green, opacity=.5] (1.5,1) rectangle (2,1.5);
            \draw[step=.5cm,black,very thin] (0,0) grid (2.5,2.5);
            \node at (1.25,-.4) {$w=31542$};
        \end{scope}
    \end{tikzpicture}}
    \caption{Examples of vexillary (left) and almost vexillary (right) permutations with their Rothe diagrams.}
    \label{fig:permutations}
\end{figure}
\newtheorem*{thm:topdeg-almostvex}{Theorem~\ref{thm:topdeg-almostvex}}
\begin{thm:topdeg-almostvex}
    Let $w$ be any almost vexillary permutation.  Then $\hat{\mathfrak{G}}_w$ is a scalar multiple of $\chi_{{D}}$ for some diagram ${D}$.  In particular, $\text{Supp}(\hat{\mathfrak{G}}_w)$ is M-convex, and $\text{Newton}(\hat{\mathfrak{G}}_w)$ is a schubitope.  
\end{thm:topdeg-almostvex}
\begin{proof}
    Corollary \ref{cor:ortho_permute} states since $D_{\_}(w)$ is a column permutation of some skyline diagram $D_{\text{Sky}}(\alpha)$, then $\mathscr{G}_{D_{\_}(w)}=\mathscr{G}_{D_{\text{Sky}}(\alpha)}$ for some weak composition $\alpha$.  By Theorem \ref{thm:LascOrthodontia}, this implies that $\mathscr{G}_{D_{\_}(w)}=\mathfrak{L}_{\alpha}$.  Theorem \ref{thm:SnowDiagramChar} therefore implies that the maximal degree component of $\mathscr{G}_{D_{\_}(w)}$ is a scalar multiple of $\chi_{\text{snow}(D_{Sky}(\alpha))}$.

    For each $i\in [n]$, let $k_i$ be the number of packed columns of $D(w)$ with $i$ squares.  Let ${D}$ be the diagram obtained from $\text{snow}(D_{Sky}(\alpha))$ by appending $k_i$ packed columns with $i$ squares for every $i\in [n]$.  Observe that by definition, $\mathscr{G}_{D(w)} = \omega_1^{k_1}\cdots\omega_n^{k_n}\mathscr{G}_{D_{\_}(w)}$ where $w_i=x_1\cdots x_i$.  Furthermore, $\chi_{{D}}=w_1^{k_1}\cdots w_n^{k_n}\chi_{\text{snow}(D_{Sky}(\alpha))}$.  By Theorem \ref{thm:GrothOrthodontia}, we conclude that $\hat{\mathfrak{G}}_w$ is a scalar multiple of $\chi_{{D}}$.  
\end{proof}

\begin{conj}
    \label{conj:bubbling-almostvex}
    For every almost vexillary permutation $w$, there exists a subset $A\subset D(w)$ such that $\supp(\mathfrak G_w) = \{\wt(\mathcal D)\colon \mathcal D \in \SBD(D(w), \emptyset,A)\}.$
\end{conj}

Conjecture \ref{conj:bubbling-almostvex} would imply that for almost vexillary $w$, $\widetilde{\mathfrak G}_w$ has M-convex support.  
We next prove a set of rules which allow us to combine permutations whose Castelnuovo-Mumford polynomials have schubitope support into more permutations with the same property.

\begin{lem}
    \label{lem:symmetric-1k}
    Suppose $w\in S_n$ and $m\in[n-1]$ such that $w(k)=k$ for all $i\in[m]$.  Then $\mathfrak{G}_w$ is symmetric in the variables $x_1,\ldots,x_{m+1}$. 
\end{lem}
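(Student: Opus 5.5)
The plan is to show that $\mathfrak{G}_w$ is invariant under each transposition $s_i$ with $1\leq i\leq m$. The statement has a typo: it should read $w(k)=k$ for all $k\in[m]$. Since $s_1,\ldots,s_m$ generate the symmetric group on $x_1,\ldots,x_{m+1}$, invariance under each of them gives full symmetry in $x_1,\ldots,x_{m+1}$. The basic tool is the standard fact about divided differences: if $w(i)>w(i+1)$ (a descent at $i$), then $\mathfrak{G}_w$ is symmetric in $x_i,x_{i+1}$. This is immediate from the defining recursion. Write $w=vs_i$ with $v(i)<v(i+1)$; then $\mathfrak{G}_w$ is not what we want directly, so instead we use $\mathfrak{G}_{w}$ vs.\ $\mathfrak{G}_{ws_i}$ as follows.

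The descent fact is set up as follows.

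\begin{itemize}
\item If $u(i)<u(i+1)$, the recursion gives $\mathfrak{G}_u=\partial_i((1-x_{i+1})\mathfrak{G}_{us_i})$.
\item Any image of $\partial_i$ is $s_i$-symmetric, since $\partial_i f=(f-s_if)/(x_i-x_{i+1})$ is fixed by $s_i$.
\item Hence $\mathfrak{G}_u$ is symmetric in $x_i,x_{i+1}$ whenever $u$ has an \emph{ascent} at $i$, provided $u$ is not the longest element.
\end{itemize}

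With the recursion as written in this paper, ascents of $w$ give $s_i$-symmetry. The case $w=w_0$ has no ascents and so does not matter. If one uses instead the convention in which descents give symmetry, the argument is identical with the roles swapped; I will check against a small example such as $w=12$ versus $21$ to fix the orientation.

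Now use the hypothesis. Since $w(k)=k$ for $k\leq m$, we have $w(1)<w(2)<\cdots<w(m)$. Moreover $w(m+1)>m$, because the values $1,\ldots,m$ are already used, so $w(m)=m<w(m+1)$. Thus $w$ has ascents at every $i\in[m]$. Provided $w\neq w_0$, which holds because $w(1)=1$ forces $n=1$ in that case, and that case has no $i\in[m]$ anyway, each $\mathfrak{G}_w$ is $s_i$-invariant for $i=1,\ldots,m$. This gives symmetry in $x_1,\ldots,x_{m+1}$.

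The main obstacle is confirming the sign and direction convention of the recursion: whether the displayed definition indeed produces $\mathfrak{G}_w$ from $\mathfrak{G}_{ws_i}$ when $w$ has an ascent at $i$. As written, the paper's recursion says exactly this, so the argument goes through directly. If the intended convention were the usual one, where $\mathfrak{G}_{ws_i}=\partial_i((1-x_{i+1})\mathfrak{G}_w)$ for a descent of $w$, then I would instead verify the claim through the orthodontia or bumpless pipe dream description (Theorem~\ref{thm:BPDGroth}). In that case I would check directly that the hypothesis forces the first $m$ rows to behave symmetrically, and I expect that version to be considerably harder.
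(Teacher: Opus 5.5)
Your argument is correct, and it takes a genuinely different and much shorter route than the paper.

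The paper proves $s_k$-invariance for each $k\in[m]$ combinatorially. It builds an explicit involution $f_k$ on $\MBPD(w)$ that swaps the $k$-th and $(k+1)$-st coordinates of $\wt(P,S)$, so it preserves total degree and hence the sign $(-1)^{|D(P)|+|S|-\ell(w)}$. The involution exchanges marked elbows in certain pairs across rows $k,k+1$ and droops or reverse-droops the pipes $1,\ldots,k$; the paper then invokes Theorem~\ref{thm:BPDGroth}.

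You instead note that $w(1)<\cdots<w(m)<w(m+1)$, so every $i\in[m]$ is an ascent. Hence $\mathfrak{G}_w=\partial_i\bigl((1-x_{i+1})\mathfrak{G}_{ws_i}\bigr)$ lies in the image of $\partial_i$ and is $s_i$-invariant. Since $s_1,\ldots,s_m$ generate the symmetric group on $x_1,\ldots,x_{m+1}$, you are done. The only input is that the recursion holds at \emph{every} ascent, not just at one chosen ascent. This is the standard well-definedness of the definition: the operators $f\mapsto\partial_i((1-x_{i+1})f)$ are idempotent and satisfy the braid relations. It is worth stating this explicitly.

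What each approach buys:
\begin{itemize}
\item Your route is rigorous in a few lines and proves more. It shows $\mathfrak{G}_w$ is $s_i$-symmetric at every ascent of $w$, so the fixed-point hypothesis is only used to produce ascents at $1,\ldots,m$.
\item The paper's route gives an explicit weight-swapping bijection on marked bumpless pipe dreams. The cost is a much longer verification, much of which the paper only sketches.
\end{itemize}

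Two cleanups are needed.
\begin{itemize}
\item Your opening sentence says descents give symmetry, which is backwards. For example, $\mathfrak{G}_{21}=x_1$ is not symmetric while $\mathfrak{G}_{12}=1$ is. Your bullets then state the ascent version correctly.
\item The convention ``obstacle'' is illusory. The ``usual'' identity $\mathfrak{G}_{ws_i}=\partial_i((1-x_{i+1})\mathfrak{G}_w)$ for a descent $i$ of $w$ is exactly the paper's recursion after renaming $u=ws_i$. There is nothing to reconcile and no need to fall back on bumpless pipe dreams.
\end{itemize}
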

\begin{proof}
    Fix any $k\in[m]$.  We will define a bijection $f_k:\text{MBPD}(w)\rightarrow\text{MBPD}(w)$ as follows.  Consider a marked BPD $(P,S)$.  For each $i\in[n]$, let $P_i$ denote the set of tiles containing portions of pipe $i$.  Note that for each $i\in[k]$, pipe $i$ of $P$ must pass through both rows $k$ and $k+1$.  Let $A^k_i(P)$ denote the set of tiles $(k,j)$ such that $(k+1,j)\in P_i$.  Similarly, let $A^{k+1}_i(P)$ denote the set of tiles $(k+1,j)$ such that $(k,j)\in P_i$.  Recall that $D(P)$ denotes the set of blank tiles in $P$.  Denote by $S_{\text{row k}}$, the set of squares in row $k$ of $S$. Let
    \[a^k_i(P)=\begin{cases}
        |D(P)\cap A^k_i(P)| & \text{if } A^{k+1}_i(P)\cap P_j\neq\emptyset \text{ for some } j<k+1,i\neq j\\
        |D(P)\cap A^k_i(P)|+|P_i\cap S_{\text{row k}}| & \text{if } A^{k+1}_i(P)\cap P_j=\emptyset \text{ for all } j<k+1,i\neq j
    \end{cases}\]
    \[a^{k+1}_i(P)=\begin{cases}
        |D(P)\cap A^{k+1}_i(P)| & \text{if } A^{k}_i(P)\cap P_j\neq\emptyset \text{ for some } j<k+1,i\neq j\\
        |(D(P)\cup S)\cap A^{k+1}_i(P)| & \text{if } A^{k}_i(P)\cap P_j=\emptyset \text{ for all } j<k+1,i\neq j
    \end{cases}.\]
    Denote by $\text{Pairs}_k(P)$ the set of pairs $((k,j_1),(k+1,j_2))$ of \scalebox{.6}{\begin{tikzpicture}
\draw[gray, thin] (0,0) rectangle (.5,.5);
\draw[black,thick] (0,.25) .. controls (.25,.25) .. (.25,.5);
\end{tikzpicture}} tiles in $P$ such that for some $j$, $(k+1,j_2)\in P_j$ and $(k,j_1) \in A^k_j$..  The following example shows a section of rows $k$ and $k+1$ in a BPD $P$.  Pipe $i$ is shown in bold, and the sets $A^{k}_i(P)$ and $A^{k+1}_i(P)$ are shown in green and blue respectively.  The pair of tiles $((k,5),(k+1,6))$ is in $\text{Pairs}_k(P)$.

\begin{center}
    \begin{tikzpicture}
    \filldraw[draw=gray, color=green, opacity=.5] (0,.5) rectangle (1.5,1);
    \filldraw[draw=gray, color=blue, opacity=.5] (1,0) rectangle (2.5,.5);
        \draw[step=.5cm,gray,very thin] (0,0) grid (3,1);
        \draw[black, line width=.1cm] (0.25,0)..controls (0.25,0.25).. (0.5,0.25)--(1,0.25)..controls(1.25,0.25)..(1.25,0.5)..controls (1.25,.75) ..(1.5,.75)--(2,.75)..controls(2.25,.75)..(2.25,1);
        \draw[black, thick] (2.25,0)..controls(2.25,.25)..(2.5,.25)..controls(2.75,.25)..(2.75,.5)..controls(2.75,.75)..(3,.75);
    \end{tikzpicture}
\end{center}


    For each $(P,S)\in \text{MBPD}(w)$, we now construct $f_k((P,S))$ in the following way.  First, consider the set of all $((k,j_1),(k+1,j_2))\in \text{Pairs}_k(P)$ such that exactly one tile of the pair is in $S$.  Let $S'$ be the set obtained from $S$ by swapping each such pair so that the tile in each pair which was in $S$ is not in $S'$ and the tile in each pair which was not in $S$ is in $S'$.  Then, for every $i\in[k]$, compare $a^k_i(P)$ and $a^{k+1}_i(P)$.  If $a^{k}_i(P)=a^{k+1}_i(P)$, do not change the position of pipe $i$.  If $a^{k}_i(P)>a^{k+1}_i(P)$, reverse droop pipe $i$ into the $(a^{k}_i(P)-a^{k+1}_i(P))^{\text{th}}$ blank square of $A^k_i(P)$ from the right.  Similarly, if $a^{k}_i(P)<a^{k+1}_i(P)$, droop pipe $i$ into the $(a^{k+1}_i(P)-a^{k}_i(P))^{\text{th}}$ blank square of $A^{k+1}_i(P)$ from the left.  Note that in all of these cases, if pipe $i$ contained a tile from $S'$ in row $k$ of $k+1$, then pipe $i$ will still contain a \scalebox{.6}{\begin{tikzpicture}
\draw[gray, thin] (0,0) rectangle (.5,.5);
\draw[black,thick] (0,.25) .. controls (.25,.25) .. (.25,.5);
\end{tikzpicture}} tile in the same row after this move.  We add this new \scalebox{.6}{\begin{tikzpicture}
\draw[gray, thin] (0,0) rectangle (.5,.5);
\draw[black,thick] (0,.25) .. controls (.25,.25) .. (.25,.5);
\end{tikzpicture}} tile to $S'$.  By the properties of BPDs and the fact that $w(i)=i$ for every $i\in[k]$, the result of this procedure is a valid marked BPD $f_k((P,S))$.  Figure \ref{fig:symmertic-ex} shows an example of $f_k$ acting on a section of a marked BPD $(P,S)$.  By construction, $f_k((P,S))$ has the property that 
    \[\wt (f_k((P,S)))_i=\begin{cases}
        \wt((P,S))_{i+1} & \text{if } i=k \\
        \wt((P,S))_{i-1} & \text{if } i=k+1 \\
        \wt((P,S))_{i} & \text{otherwise} 
    \end{cases}.\]
    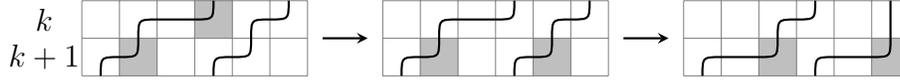
\begin{figure}
        \centering
        \begin{tikzpicture}
        \node at (-.5,.25) {$k+1$};
        \node at (-.5,.75) {$k$};
        \filldraw[lightgray] (.5,0) rectangle (1,.5);
        \filldraw[lightgray] (1.5,.5) rectangle (2,1);
            \draw[step=.5cm,gray,very thin] (0,0) grid (3,1);
            \draw[black,thick] (.25,0).. controls (.25,.25)..(.5,.25)..controls(.75,.25)..(.75,.5)..controls(.75,.75)..(1,.75)--(1.5,.75)..controls(1.75,.75)..(1.75,1);
            \draw[black,thick] (1.75,0)..controls (1.75,.25)..(2,.25)..controls(2.25,.25)..(2.25,.5)..controls(2.25,.75)..(2.5,.75)..controls(2.75,.75)..(2.75,1);
            \draw[-stealth, black, thick] (3.2,.5) -- (3.8,.5);
            \begin{scope}[xshift=4cm]
            \filldraw[lightgray] (.5,0) rectangle (1,.5);
        \filldraw[lightgray] (2,0) rectangle (2.5,.5);
                \draw[step=.5cm,gray,very thin] (0,0) grid (3,1);
                \draw[black,thick] (.25,0).. controls (.25,.25)..(.5,.25)..controls(.75,.25)..(.75,.5)..controls(.75,.75)..(1,.75)--(1.5,.75)..controls(1.75,.75)..(1.75,1);
            \draw[black,thick] (1.75,0)..controls (1.75,.25)..(2,.25)..controls(2.25,.25)..(2.25,.5)..controls(2.25,.75)..(2.5,.75)..controls(2.75,.75)..(2.75,1);
            \draw[-stealth, black, thick] (3.2,.5) -- (3.8,.5);
            \end{scope}
            \begin{scope}[xshift=8cm]
            \filldraw[lightgray] (1,0) rectangle (1.5,.5);
        \filldraw[lightgray] (2.5,0) rectangle (3,.5);
                \draw[step=.5cm,gray,very thin] (0,0) grid (3,1);
                \draw[black,thick] (.25,0).. controls (.25,.25)..(.5,.25)--(1,.25)
                ..controls(1.25,.25)..(1.25,.5)..controls(1.25,.75)..(1.5,.75)..controls(1.75,.75)..(1.75,1);
                \draw[black,thick] (1.75,0)..controls (1.75,.25)..(2,.25)--(2.5,.25)..controls(2.75,.25)..(2.75,.5)--(2.75,1);
            \end{scope}
        \end{tikzpicture}
        \caption{Construction of $f_k((P.S))$ in the proof of Lemma~\ref{lem:symmetric-1k} (gray shaded squares represent the set $S$).  We first swap the pair $((k,4),(k+1,5))$ and then droop both pipes.}
        \label{fig:symmertic-ex}
    \end{figure}

    Furthermore, $f_k$ is its own inverse, so in particular, $f_k$ is a bijection.  We can therefore conclude that $\mathfrak{G}_w$ is symmetric in the variables $x_k$ and $x_{k+1}$.  Since the above procedure holds for any $k\in[m]$, we see that $\mathfrak{G}_w$ is symmetric in the variables $x_1,\ldots,x_{k+1}$. 
\end{proof}
\begin{thm}
    \label{thm:layering-Groth}
    Let $w \in S_k$ and $w' \in S_n$ such that $w'(i)=i$ for all $i\in[k]$.  Let $w''$ be the permutation with one line notation $w(1)\ldots w(k) w'(k+1)\ldots w'(n)$.  Then $\mathfrak{G}_{w''}=\mathfrak{G}_w\mathfrak{G}_{w'}$.
\end{thm}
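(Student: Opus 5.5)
We prove Theorem~\ref{thm:layering-Groth} using the divided difference recursion together with the symmetry from Lemma~\ref{lem:symmetric-1k}. Write $w''=w\cdot w'$ as a product in $S_n$, where $w$ is viewed in $S_n$ by fixing $k+1,\ldots,n$. This is legitimate because $w'$ fixes $[k]$, so $w'$ maps $\{k+1,\ldots,n\}$ to itself. Concretely, $w''(i)=w(i)$ for $i\le k$ and $w''(i)=w'(i)$ for $i>k$. Moreover $\ell(w'')=\ell(w)+\ell(w')$: an inversion with both positions in $[k]$ is an inversion of $w$; one with both positions in $\{k+1,\ldots,n\}$ is an inversion of $w'$; and a pair $i\le k<j$ never forms an inversion, since $w''(i)\le k<w''(j)$.

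\textbf{Step 1: base case.} Induct on $\ell(w)$, with $w'$ fixed throughout. If $w=\mathrm{id}$, then $w''=w'$ and $\mathfrak G_{w}=1$, so there is nothing to prove.

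\textbf{Step 2: the inductive step.} Suppose $w\neq\mathrm{id}$ and pick a descent $i<k$ of $w$, so $w(i)>w(i+1)$. Set $v=ws_i$; then $\ell(v)=\ell(w)-1$ and $v''=ws_i w'$. Since $w'$ fixes $[k]$, the permutation $s_i$ commutes with $w'$, so $v''=v w'$ is again of the layered form $v(1)\ldots v(k)w'(k+1)\ldots w'(n)$. The defining recursion, read from $w''$ through its descent at $i$ (so that $v''(i)<v''(i+1)$), gives
\[
\mathfrak G_{w''}=\partial_i\big((1-x_{i+1})\mathfrak G_{v''}\big),
\]
and by the inductive hypothesis $\mathfrak G_{v''}=\mathfrak G_v\,\mathfrak G_{w'}$. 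Since $w'$ fixes $[k]$ and $i+1\le k$, Lemma~\ref{lem:symmetric-1k} (with $m=k-1$, or $m=k$ when $k<n$) shows that $\mathfrak G_{w'}$ is symmetric in $x_i,x_{i+1}$. Divided differences are linear over $s_i$-invariant polynomials, so
\[
\mathfrak G_{w''}=\mathfrak G_{w'}\,\partial_i\big((1-x_{i+1})\mathfrak G_v\big)=\mathfrak G_{w'}\,\mathfrak G_w,
\]
where the last equality is the defining recursion for $\mathfrak G_w$ in $S_k$.

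\textbf{Step 3: consistency of the recursion.} The recursion in the paper is stated top-down from the longest element. We therefore need that $\mathfrak G_w$, computed inside $S_k$, agrees with $\mathfrak G_w$ computed in $S_n$ after embedding $w$. This is the standard stability of Grothendieck polynomials, and we assume it. Equivalently, it follows from Theorem~\ref{thm:BPDGroth}, since BPDs of the embedded $w$ are forced to be the trivial layout outside the top-left $k\times k$ block. Granting stability, the recursion above is exactly the one defining $\mathfrak G_w$.

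\textbf{Main obstacle.} The step requiring the most care is the symmetry input. Lemma~\ref{lem:symmetric-1k} is stated with a hypothesis on $[m]$ and a conclusion for $x_1,\ldots,x_{m+1}$, and the indices must be matched so that $x_i,x_{i+1}$ with $i+1\le k$ are covered. This is clearly the case, since we only need symmetry in $x_1,\ldots,x_k$, which follows with $m=k-1$. Beyond that, the rest is the routine verification from Step 2 that $s_i$ commutes with $w'$, which is what makes $v''=vw'$ and lets the induction close.
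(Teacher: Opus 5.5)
Your Step 2 runs the defining recursion in the wrong direction, and this breaks the argument. The recursion reads $\mathfrak{G}_u=\partial_i\big((1-x_{i+1})\mathfrak{G}_{us_i}\big)$ when $u(i)<u(i+1)$. So it computes the polynomial of the \emph{shorter} permutation from that of the \emph{longer} one, which is why it starts at $w_0$.

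With $w(i)>w(i+1)$ and $v=ws_i$, the recursion actually gives $\mathfrak{G}_{v''}=\partial_i\big((1-x_{i+1})\mathfrak{G}_{w''}\big)$ and $\mathfrak{G}_{v}=\partial_i\big((1-x_{i+1})\mathfrak{G}_{w}\big)$. The identities you use, $\mathfrak{G}_{w''}=\partial_i\big((1-x_{i+1})\mathfrak{G}_{v''}\big)$ and $\mathfrak{G}_{w}=\partial_i\big((1-x_{i+1})\mathfrak{G}_{v}\big)$, are false in general. Anything in the image of $\partial_i$ is symmetric in $x_i,x_{i+1}$, whereas $\mathfrak{G}_w$ need not be when $i$ is a descent. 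Concretely, for $k=2$, $w=21$, $w'=\mathrm{id}$, your formula gives $\mathfrak{G}_{21}=\partial_1(1-x_2)=1$, but $\mathfrak{G}_{21}=x_1$. Because $\partial_i$ has a large kernel, you cannot climb from $w=\mathrm{id}$ to longer permutations with this recursion. The trivial base case $w=\mathrm{id}$ therefore buys nothing, and the induction as set up does not start.

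Your commuting step is the right idea, and it does prove the reverse implication. If $\mathfrak{G}_{w''}=\mathfrak{G}_w\mathfrak{G}_{w'}$ and $i<k$ is a descent of $w$, then
$\mathfrak{G}_{(ws_i)''}=\partial_i\big((1-x_{i+1})\mathfrak{G}_w\mathfrak{G}_{w'}\big)=\mathfrak{G}_{w'}\mathfrak{G}_{ws_i}$.
A downward induction then reduces the theorem to the single case $w=k\,(k-1)\cdots 1$, that is, to
\[
\mathfrak{G}_{k\,(k-1)\cdots 1\,w'(k+1)\cdots w'(n)}=x_1^{k-1}x_2^{k-2}\cdots x_{k-1}\,\mathfrak{G}_{w'}.
\]
This base case is where the real content lies, and your proposal gives no argument for it.

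The paper avoids the issue by using the orthodontia formula (Theorem~\ref{thm:GrothOrthodontia}), which builds $\mathfrak{G}_{w''}$ upward. The orthodontic sequence of $D(w'')$ first uses moves $i_b<k$ and reaches $D(w')$. Hence $\mathfrak{G}_{w''}$ equals $\omega$-monomials and operators $\overline{\pi}_{i}$ with $i<k$ applied to $\mathfrak{G}_{w'}$ itself. There your symmetry step (Lemma~\ref{lem:symmetric-1k}) pulls $\mathfrak{G}_{w'}$ out, and what remains is the orthodontia formula for $\mathfrak{G}_w$.

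Another way to supply the missing input, and in fact to prove the whole statement at once, is K-theoretic pipe dreams. The antidiagonal $i+j=k+1$ must be empty, since $s_k$ cannot occur in a word with Demazure product $w''$. This splits every pipe dream of $w''$ into one for $w$ and one for $w'$, with weights and signs multiplying.
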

\begin{proof}

    Recall that by Theorem~\ref{thm:GrothOrthodontia}, $\mathfrak{G}_{w''}=\mathscr{G}_{D(w'')}$.  By defintion, there is some step in the orthodontia sequence for $D(w'')$ where the resulting diagram equals $D(w')$.  In particular, we can fix $j\leq l$ such that $i_1(D(w'')),\ldots,i_j(D(w''))<k$ and 
    \[\mathfrak{G}_{w''} = \omega_1^{k_1(D(w''))}\cdots\omega_n^{k_n(D(w''))}\overline{\pi}_{i_1(D(w''))}(\omega_{i_1(D(w''))}^{m_1(D(w''))}\cdots\overline{\pi}_{i_j(D(w''))}(\omega_{i_j(D(w''))}^{m_j(D(w''))}\mathfrak{G}_{w'})\cdots)).
		\]
    By Lemma~\ref{lem:symmetric-1k}, $\mathfrak{G}_{w'}$ commutes with $\overline{\pi}_{i_1(D(w''))},\ldots,\overline{\pi}_{i_j(D(w''))}$, so we have 
    \[\mathfrak{G}_{w''}  = \mathfrak{G}_{w'}\omega_1^{k_1(D(w''))}\cdots\omega_n^{k_n(D(w''))}\overline{\pi}_{i_1(D(w''))}(\omega_{i_1(D(w''))}^{m_1(D(w''))}\cdots\overline{\pi}_{i_j(D(w''))}(\omega_{i_j(D(w''))}^{m_j(D(w''))})\cdots)).
		\]
        Furthermore,
        \[\mathfrak{G}_{w}  = \omega_1^{k_1(D(w''))}\cdots\omega_n^{k_n(D(w''))}\overline{\pi}_{i_1(D(w''))}(\omega_{i_1(D(w''))}^{m_1(D(w''))}\cdots\overline{\pi}_{i_j(D(w''))}(\omega_{i_j(D(w''))}^{m_j(D(w''))})\cdots)),
		\]
        so we can conclude that $\mathfrak{G}_{w''}=\mathfrak{G}_w\mathfrak{G}_{w'}$.
\end{proof}
\begin{cor}
\label{cor:layering-SBD}
    Let $w \in S_k$ and $w' \in S_n$ such that $w'(i)=i$ for all $i\in[k]$.  Let $w''$ be the permutation with one line notation $w(1)\ldots w(k) w'(k+1)\ldots w'(n)$.  If there exist subsets $A\subset D(w)$ and $A'\subset D(w')$ such that $\supp(\mathfrak G_w) = \{\wt(\mathcal D)\colon \mathcal D \in \SBD(D(w), \emptyset,A)\}$ and $\supp(\mathfrak G_{w'}) = \{\wt(\mathcal D)\colon \mathcal D \in \SBD(D(w'), \emptyset,A')\}$, then $\supp(\mathfrak G_{w''}) = \{\wt(\mathcal D)\colon \mathcal D \in \SBD(D(w''), \emptyset,A\cup A')\}$. 
\end{cor}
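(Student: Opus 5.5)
The plan is to combine the product formula of Theorem~\ref{thm:layering-Groth} with a decomposition of the Rothe diagram of $w''$. By Theorem~\ref{thm:layering-Groth}, $\mathfrak{G}_{w''}=\mathfrak{G}_w\mathfrak{G}_{w'}$. Each factor has nonnegative integer coefficients in its signed homogeneous pieces: every monomial of $\mathfrak{G}_u^{(k)}$ has sign $(-1)^{k-\ell(u)}$, by Theorem~\ref{thm:BPDGroth}. Hence there is no cancellation in the product, degree by degree. The sign of a product monomial depends only on its total degree, and products of same-sign terms of fixed degrees combine with a common sign. Therefore $\supp(\mathfrak{G}_{w''})=\supp(\mathfrak{G}_w)+\supp(\mathfrak{G}_{w'})$ as a Minkowski sum of exponent sets. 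This no-cancellation step should be checked carefully: monomials of the same total degree all carry the sign $(-1)^{\deg-\ell(w)-\ell(w')}$, and $\ell(w'')=\ell(w)+\ell(w')$, so it goes through.

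Next I would describe $D(w'')$. Since $w'$ fixes $[k]$, its values on rows $k+1,\dots,n$ lie in $\{k+1,\dots,n\}$, and $w$ permutes $[k]$ in rows $1,\dots,k$. It follows that $D(w'')=D(w)\sqcup D(w')$. The part $D(w)$ sits in rows and columns $\le k$. The part $D(w')$ has no squares in rows $\le k$, because $w'(i)=i$ makes $(i,j)\in D(w')$ with $i\le k$ impossible (that would need $j<i$ and $w'^{-1}(j)>i$, but $w'^{-1}(j)=j$). Similarly, $D(w')$ has no squares in columns $\le k$. So the two pieces lie in disjoint column sets: columns $[k]$ for $D(w)$ and columns $>k$ for $D(w')$.

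With this, bubbling moves decouple. Both bubbling and K-bubbling moves act within a single column, and the legality of a move depends only on that column. Hence every $\mathcal{D}\in\SBD(D(w''),\emptyset,A\cup A')$ is exactly a column-disjoint union $\mathcal{D}_1\cup\mathcal{D}_2$ with $\mathcal{D}_1\in\SBD(D(w),\emptyset,A)$ and $\mathcal{D}_2\in\SBD(D(w'),\emptyset,A')$. Conversely, any such pair gives an element, since moves in different columns commute. The weight is additive: $\wt(\mathcal D)=\wt(\mathcal D_1)+\wt(\mathcal D_2)$. Here we regard $D(w)$ inside the $n\times n$ grid with zero weight in rows $>k$. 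A square of $D(w')$ may bubble into rows $\le k$, and that is allowed, matching the fact that $\mathfrak{G}_{w'}$ involves $x_1,\dots,x_k$. Combining with the first paragraph, the set of SBD weights is $\supp(\mathfrak{G}_w)+\supp(\mathfrak{G}_{w'})=\supp(\mathfrak{G}_{w''})$.

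The main obstacle, mild here, is the sign/no-cancellation argument together with the verification that $D(w'')$ is exactly the column-disjoint union. The latter needs rows $k+1,\dots,n$ of $w''$ to agree with $w'$ and the columns $\le k$ in those rows to be covered by the hooks from rows $\le k$. Both follow directly from the definitions, so I expect the proof to be short.
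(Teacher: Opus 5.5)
Your proposal is correct and is essentially the paper's argument; the paper derives this corollary in one line from Theorem~\ref{thm:layering-Groth}. You make explicit the details that step leaves implicit: degree-wise sign-coherence gives $\supp(\mathfrak{G}_{w''})=\supp(\mathfrak{G}_w)+\supp(\mathfrak{G}_{w'})$, the Rothe diagram splits column-disjointly as $D(w'')=D(w)\sqcup D(w')$, and streamlined bubbling diagrams decouple column by column with additive weights.
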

\begin{proof}
    Follows directly from Theorem~\ref{thm:layering-Groth}.
\end{proof}
\begin{cor}
\label{cor:layering-Schub}
    Let $w \in S_k$ and $w' \in S_n$ such that $w'(i)=i$ for all $i\in[k]$.  Let $w''$ be the permutation with one line notation $w(1)\ldots w(k) w'(k+1)\ldots w'(n)$.  If both $\text{Newton}(\hat{\mathfrak{G}}_w)$ and $\text{Newton}(\hat{\mathfrak{G}}_{w'})$ are schubitopes, then $\text{Newton}(\hat{\mathfrak{G}}_{w''})$ is also a schubitope.
\end{cor}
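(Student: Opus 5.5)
The plan is to deduce the statement from the product formula of Theorem~\ref{thm:layering-Groth}, $\mathfrak{G}_{w''}=\mathfrak{G}_w\mathfrak{G}_{w'}$. The first step is to take top-degree components. Since the product of the leading homogeneous components of two nonzero polynomials is nonzero, the top-degree component of a product is the product of the top-degree components. Hence
\[\hat{\mathfrak{G}}_{w''}=\hat{\mathfrak{G}}_w\,\hat{\mathfrak{G}}_{w'}.\]
Newton polytopes are multiplicative under products: $\Newton(fg)=\Newton(f)+\Newton(g)$, the Minkowski sum. So it suffices to show that the Minkowski sum of two schubitopes is again a schubitope.

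For that, write $\Newton(\hat{\mathfrak{G}}_w)=S_{D_1}$ and $\Newton(\hat{\mathfrak{G}}_{w'})=S_{D_2}$ for diagrams $D_1\subset[n]\times[m_1]$ and $D_2\subset[n]\times[m_2]$. Here we regard the polynomial for $w\in S_k$ as living in $x_1,\ldots,x_n$, with $D_1$ padded by empty rows. Let $D_1\sqcup D_2\subset[n]\times[m_1+m_2]$ be the diagram whose first $m_1$ columns are those of $D_1$ and whose last $m_2$ columns are those of $D_2$. The relation $C\leq D_1\sqcup D_2$ is a columnwise condition. So the diagrams $C\le D_1\sqcup D_2$ are exactly the concatenations $C_1\sqcup C_2$ with $C_1\leq D_1$ and $C_2\leq D_2$, and $\wt(C_1\sqcup C_2)=\wt(C_1)+\wt(C_2)$. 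Therefore
\[\{\wt(C)\colon C\le D_1\sqcup D_2\}=\{\wt(C_1)\colon C_1\le D_1\}+\{\wt(C_2)\colon C_2\le D_2\}.\]
Taking convex hulls, and using that the convex hull of a Minkowski sum of finite sets is the Minkowski sum of their convex hulls, gives $S_{D_1\sqcup D_2}=S_{D_1}+S_{D_2}$. Equivalently, $\mathcal M_{D_1\sqcup D_2}$ is spanned by products of generators of $\mathcal M_{D_1}$ and $\mathcal M_{D_2}$, and one can reason through $\chi$ via Proposition~\ref{prop:support-of-chi}. The set-level identity suffices, so no character-level identity is needed.

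Combining these steps, $\Newton(\hat{\mathfrak{G}}_{w''})=S_{D_1}+S_{D_2}=S_{D_1\sqcup D_2}$, which is a schubitope. The only delicate points are bookkeeping ones. First, the diagrams must be placed in the same number of rows $n$, which is harmless because extra empty rows contribute zero weight. Second, the multiplicativity of leading forms needs $\hat{\mathfrak{G}}_w,\hat{\mathfrak{G}}_{w'}\neq 0$, which holds because Grothendieck polynomials are nonzero. I expect no substantial obstacle. The main point to state carefully is that the definition of $C\le D$ is columnwise, so the support of the schubitope of a column-concatenation is exactly the Minkowski sum of the individual supports.
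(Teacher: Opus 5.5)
Your proposal is correct and follows the same route as the paper, whose proof says only that the corollary follows directly from Theorem~\ref{thm:layering-Groth}. You supply the details that one-line proof leaves implicit: top-degree components multiply, Newton polytopes of products are Minkowski sums, and $S_{D_1}+S_{D_2}=S_{D_1\sqcup D_2}$ because $C\le D$ is a columnwise condition (and you are right to avoid claiming $\chi_{D_1\sqcup D_2}=\chi_{D_1}\chi_{D_2}$, which fails in general).
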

\begin{proof}
    Follows directly from Theorem~\ref{thm:layering-Groth}.
\end{proof}

\begin{thm}
    \label{thm:layering-high-to-low}
    Let $w\in S_n$, and let $w'\in S_k$.  Let $w''$ denote the permutation $w'(1)+n,\ldots,w'(k)+n,w(1),\ldots,w(n)$.  Then $\mathfrak{G}_{w''}=\mathfrak{G}_w(x_{k+1},\ldots,x_{k+n})\mathfrak{G}_{w'}(x_1,\ldots,x_k)\prod_{i=1}^{k}x_i^n$.
\end{thm}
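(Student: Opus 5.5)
Throughout I write $N:=n+k$ and view $w''\in S_N$, so the Rothe diagram and the orthodontia algorithm live in an $N\times N$ grid. The plan is to work with the Rothe diagram $D(w'')$ and the orthodontia formula, Theorem~\ref{thm:GrothOrthodontia}, in the same spirit as the proof of Theorem~\ref{thm:layering-Groth}.

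\emph{Step 1: the shape of $D(w'')$.} For $i\le k$ we have $w''(i)=w'(i)+n>n$. Every value $j\le n$ occurs in a row $w^{-1}(j)+k>k$, so the top $k$ rows contain every square in columns $1,\dots,n$. Hence columns $1,\dots,n$ of $D(w'')$ consist of the full block $[k]\times[n]$ sitting above a copy of $D(w)$ shifted down by $k$ rows. In columns $n+1,\dots,n+k$ no row below $k$ can contain a square, since $w''(i)\le n$ there. So these columns contain exactly a copy of $D(w')$ shifted right by $n$.

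\emph{Step 2: reduce to a product of partial orthodontia computations.} I would first treat the full block. Columns $j\le n$ whose copy of $D(w)$ is empty are packed of height $k$ and contribute factors $\omega_k$. The columns containing squares of $D(w)$ are not packed, but their top $k$ squares form a full rectangle. I expect the orthodontia run to proceed in two phases. In the first phase only the right part, the copy of $D(w')$, is processed; its missing teeth all lie in rows $<k$, so only indices $i_b<k$ occur. A leftmost-column subtlety arises here, and I would handle it by Corollary~\ref{cor:ortho_permute}-style column reordering or by directly checking that the left columns contain no missing teeth, since they are full on top. In the second phase the shifted $D(w)$ is processed with indices $i_b>k$.

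The cleaner route is to mimic Theorem~\ref{thm:layering-Groth}. At some stage of the orthodontia sequence the diagram equals $[k]\times[n]$ stacked above the shifted $D(w)$, possibly with additional packed columns of height $k$. The operators and $\omega$-powers applied before reaching that stage are exactly those from the orthodontic sequence of $D(w')$, because $D(w')$ has no squares below row $k$. Those operators $\overline{\pi}_i$ with $i<k$ act on a factor which is symmetric in $x_1,\dots,x_k$. That symmetry holds because the remaining polynomial is $\omega_k^{\,n}$ times a polynomial in $x_{k+1},\dots$. Hence these operators commute past it, as in the use of Lemma~\ref{lem:symmetric-1k}, and this phase produces $\mathfrak{G}_{w'}(x_1,\dots,x_k)$.

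\emph{Step 3: identify the second factor.} It remains to show that the orthodontia polynomial of $[k]\times[n]$ stacked above the shifted $D(w)$ equals $\prod_{i\le k}x_i^{\,n}\,\mathfrak{G}_w(x_{k+1},\dots,x_{k+n})$. I expect this to be the main obstacle, because here the shifted diagram is \emph{not} a Rothe diagram shifted by a Demazure operator. The approach I would try first is not orthodontia but the transition/divided-difference definition. Write $w''=u\cdot v$, where $v=n{+}1\,\dots\,n{+}k\,w(1)\dots w(n)$ carries the block and $u$ permutes the top $k$ positions. For $v$, I would argue directly from marked bumpless pipe dreams, Theorem~\ref{thm:BPDGroth}. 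The pipes ending in rows $1,\dots,k$ are forced to run straight up in the rightmost $k$ columns and then turn right, so the top-left $k\times n$ block is blank. Any K-theoretic or droop move is confined to the bottom $n\times n$ region, where it reproduces $\text{MBPD}(w)$ with rows shifted by $k$. This gives $\mathfrak{G}_v=\prod_{i\le k}x_i^n\,\mathfrak{G}_w(x_{k+1},\dots)$.

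Then $\mathfrak{G}_{w''}$ is obtained from $\mathfrak{G}_v$ by the operators $\overline{\pi}$-type divided differences in indices $<k$ that sort $n{+}1,\dots,n{+}k$ into the pattern $w'+n$. These are exactly the operators that build $\mathfrak{G}_{w'}$ from $\mathfrak{G}_{w_0^{(k)}}$. Since $\mathfrak{G}_v/\prod x_i^{n}$ is independent of $x_1,\dots,x_k$, the operators commute past that factor. Applied to $\prod_{i\le k}x_i^{n}$ together with the appropriate staircase, they yield $\prod x_i^n\,\mathfrak{G}_{w'}(x_1,\dots,x_k)$. Verifying this last identity for the operator $\partial_i((1-x_{i+1})\cdot)$ in the presence of the symmetric factor $\prod_{i\le k} x_i^n$ is routine, since $\partial_i$ is linear over symmetric functions in $x_i,x_{i+1}$.
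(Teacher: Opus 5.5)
There is a genuine gap in how you pass from $v$ to $w''$, and both versions you give fail as written.

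\textbf{Step 2 has the orthodontia phases backwards.} The algorithm always works on the leftmost non-packed column. Columns $1,\dots,n$ of $D(w'')$ are non-empty, and each is non-packed exactly when the corresponding column of $D(w)$ is, with its missing teeth in rows $\ge k+1$. For example, with $w=132$, column $2$ of $D(w'')$ has a missing tooth in row $k+1$. So your fallback claim that the left columns have no missing teeth is false. Corollary~\ref{cor:ortho_permute} also does not help: it needs a column permutation of a skyline diagram, which $D(w'')$ is not in general. What actually happens is that the block plus shifted $D(w)$ is processed first, with indices $i_b\ge k+1$. Those swaps never touch rows $\le k$, so the $D(w')$ part is processed last and sits innermost. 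The argument can be repaired by reversing the roles: $\mathfrak{G}_{w'}(x_1,\dots,x_k)$ commutes with $\overline{\pi}_i$ for $i\ge k+1$. This gives $\mathfrak{G}_{w''}=\mathfrak{G}_{w'}(x_1,\dots,x_k)\,\mathscr{G}_{D(v)}=\mathfrak{G}_{w'}(x_1,\dots,x_k)\,\mathfrak{G}_v$. That is not the commutation you wrote.

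\textbf{The last paragraph of Step 3 is simply wrong.} The operators $f\mapsto\partial_i((1-x_{i+1})f)$ lower length, but $\ell(w'')=\ell(v)+\ell(w')$. Worse, $\mathfrak{G}_v$ is symmetric in $x_1,\dots,x_k$, so every such operator with $i<k$ fixes it. Take $n=1$, $k=2$, $w'=21$: then $\mathfrak{G}_v=\mathfrak{G}_{231}=x_1x_2$, but $\mathfrak{G}_{w''}=\mathfrak{G}_{321}=x_1^2x_2$. The ``appropriate staircase'' only appears if you start from $vw_0^{(k)}=n{+}k\,\dots\,n{+}1\,w(1)\dots w(n)$, and you never computed its Grothendieck polynomial.

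\textbf{The simplest fix is the paper's actual proof.} Your BPD analysis of $v$ never used that the top-right block is the identity. In any marked BPD for $w''$, pipes $n{+}1,\dots,n{+}k$ stay in columns $>n$ and pipes $1,\dots,n$ stay in rows $>k$. Hence:
\begin{itemize}
\item the block $[k]\times[n]$ is blank;
\item the block $\{i>k,\ j>n\}$ consists only of crosses, since inductively each tile there has occupied left and bottom edges;
\item the top-right $k\times k$ block is an arbitrary marked BPD for $w'$, and the bottom-left $n\times n$ block is an arbitrary one for $w$.
\end{itemize}
Conversely, any such gluing lies in $\MBPD(w'')$. Weights factor. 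Signs factor too, because $|D(P)|$ and $\ell(w'')$ each pick up exactly $nk$ extra. So Theorem~\ref{thm:BPDGroth} gives the formula directly, with no orthodontia and no divided differences needed.
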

\begin{proof}
    Observe that every marked BPD for $w''$ has the following properties.  
    \begin{itemize}
        \item The northwest corner $\{(i,j:i\leq k, j\leq n)\}$ contains only blank tiles.
        \item The northeast corner $\{(i,j:i\leq k, j\geq n+1)\}$ is a marked BPD for $w'$.
        \item The southeast corner $\{(i,j:i\geq k+1, j\leq n)\}$ is a marked BPD for $w$.
        \item The southwest corner $\{(i,j:i\geq k+1, j\geq n+1)\}$ contains only cross tiles.
    \end{itemize}     
    Furthermore, any marked BPD with these properties is in $\MBPD(w'')$.  See Figure~\ref{fig:high-to-low} for an example.  The result then follows from Theorem~\ref{thm:BPDGroth}.  

    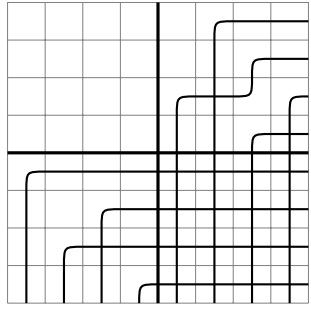
\begin{figure}
        \centering
        \begin{tikzpicture}
            \draw[step=.5cm,gray,very thin] (0,0) grid (4,4);
            \draw[black,very thick] (2,0)--(2,4);
            \draw[black,very thick] (0,2)--(4,2);
            \draw[black,thick] (3.75,0)--(3.75,2.5)..controls(3.75,2.75)..(4,2.75);
            \draw[black,thick] (3.25,0)--(3.25,2)..controls(3.25,2.25)..(3.5,2.25)--(4,2.25);
            \draw[black,thick] (2.75,0)--(2.75,3.5)..controls(2.75,3.75)..(3,3.75)--(4,3.75);
            \draw[black,thick] (2.25,0)--(2.25,2.5)..controls(2.25,2.75)..(2.5,2.75)--(3,2.75)..controls(3.25,2.75)..(3.25,3)..controls(3.25,3.25)..(3.5,3.25)--(4,3.25);
            \draw[black,thick] (1.75,0)--(1.75,0)..controls(1.75,.25)..(2,.25)--(4,.25);
            \draw[black,thick] (1.25,0)--(1.25,1)..controls(1.25,1.25)..(1.5,1.25)--(4,1.25);
            \draw[black,thick] (.75,0)--(.75,.5)..controls(.75,.75)..(1,.75)--(4,.75);
            \draw[black,thick] (.25,0)--(.25,1.5)..controls(.25,1.75)..(.5,1.75)--(4,1.75);
        \end{tikzpicture}
        \caption{A BPD for $w''$ as in the proof of Theorem~\ref{thm:layering-high-to-low}.  Here $w=1324$, $w'=2143$, and $w''=65871324$.}
        \label{fig:high-to-low}
    \end{figure}
\end{proof}
\begin{cor} \label{cor:high-to-low-SBD}
     Let $w\in S_n$, and let $w'\in S_k$.  Let $w''$ denote the permutation $w'(1)+n,\ldots,w'(k)+n,w(1),\ldots,w(n)$.  If $\text{Supp}({\mathfrak{G}}_w)=\{\text{wt}(\mathcal D): \mathcal D\in SBD(D(w),\emptyset,A\}$ and $\text{Supp}({\mathfrak{G}}_{w'})=\{\text{wt}(\mathcal D): \mathcal D\in SBD(D(w'),\emptyset,A'\}$ for some $A$ and $A'$, then there exists $A'' \subset D(w'')$ such that $\text{Supp}({\mathfrak{G}}_{w''})=\{\text{wt}(\mathcal D): \mathcal D\in SBD(D(w''),\emptyset,A''\}$.
\end{cor}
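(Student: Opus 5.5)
The plan is to describe $D(w'')$ explicitly and then deduce the support statement from the product formula in Theorem~\ref{thm:layering-high-to-low}. The Rothe diagram of $w''$ has a simple block structure. The first $k$ rows have values $w'(i)+n>n$, so in each of rows $1,\dots,k$ every column $j\le n$ is empty, and these squares are all in $D(w'')$. The columns $j\ge n+1$ of those rows form a shifted copy of $D(w')$. Rows $k+1,\dots,k+n$ have values at most $n$, so no square with column $>n$ lies there, and the squares with column $\le n$ form a downward-shifted copy of $D(w)$. Hence $D(w'')$ is the disjoint union of three pieces: the full block $[k]\times[n]$, the translate $D(w')+(0,n)$, and the translate $D(w)+(k,0)$.

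We take $A''=(A'+(0,n))\cup(A+(k,0))$, so the full block carries no distinguished squares. We then analyze $\SBD(D(w''),\emptyset,A'')$ piece by piece.

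First consider the block $[k]\times[n]$. Its squares are live but not distinguished, and every square above them is occupied, since the block begins at row $1$. So no bubbling moves act on the block itself.

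Next consider the translate of $D(w)$ in columns $\le n$. It sits directly below the full block, so its live squares can only bubble up to row $k+1$. The moves available there are exactly the moves in $\SBD(D(w),\emptyset,A)$ translated down by $k$, because rows $\le k$ of these columns are permanently filled. I need to check that a K-bubbling move cannot be blocked differently. It cannot: the moves that the copy of $D(w)$ can make inside its own grid require an empty square at some row $\ge 1$ of that grid, which corresponds to row $\ge k+1$ of the larger grid.

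Finally consider the translate of $D(w')$ in columns $>n$, rows $\le k$. Below it, the rows $>k$ of those columns are empty in $D(w'')$, so nothing interferes, and its moves are exactly those of $\SBD(D(w'),\emptyset,A')$.

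Since the columns are independent, the streamlined bubbling diagrams are exactly the products of one diagram from each piece. Their weights are therefore
\[
(n,\dots,n,0,\dots,0)+(\wt(\mathcal D'),0,\dots,0)+(0,\dots,0,\wt(\mathcal D)),
\]
with $k$ entries equal to $n$ in the first vector.

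By hypothesis, these weights range over exactly
\[
\{(n^k)+(a',0)+(0,a)\colon a'\in\supp(\mathfrak G_{w'}),\ a\in\supp(\mathfrak G_w)\}.
\]
The last step is to identify this set with $\supp(\mathfrak G_{w''})$ using Theorem~\ref{thm:layering-high-to-low}. The polynomials $\mathfrak G_w(x_{k+1},\dots,x_{k+n})$ and $\mathfrak G_{w'}(x_1,\dots,x_k)$ use disjoint sets of variables, and the monomial factor $\prod_{i\le k}x_i^n$ only shifts exponents. So the support of the product is the Minkowski-type direct sum of the two supports shifted by $(n^k,0^n)$, with no cancellation, because distinct pairs of monomials in disjoint variables give distinct monomials.

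The main point requiring care is the independence claim in the middle paragraph: the full block stays inert and the translate of $D(w)$ behaves exactly as $D(w)$ does. That reduces to observing that rows above row $1$ do not exist in $D(w)$'s own grid and are occupied in $D(w'')$, so the translation is a bijection on moves.
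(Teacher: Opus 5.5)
Your proposal is correct and follows the same route as the paper. The paper's proof is just ``Follows directly from Theorem~\ref{thm:layering-high-to-low}.'' Your argument supplies the details that this sentence leaves implicit: the block decomposition of $D(w'')$, the choice $A''=(A'+(0,n))\cup(A+(k,0))$, the column-by-column independence of bubbling moves with the full block $[k]\times[n]$ staying inert, and the observation that the product of polynomials in disjoint variables has support equal to the shifted sum of the two supports, with no cancellation.
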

\begin{proof}
    Follows directly from Theorem~\ref{thm:layering-high-to-low}.
\end{proof}
\begin{cor} \label{cor:high-to-low-Schub}
     Let $w\in S_n$, and let $w'\in S_k$.  Let $w''$ denote the permutation $w'(1)+n,\ldots,w'(k)+n,w(1),\ldots,w(n)$.  If $\text{Newton}({\hat{\mathfrak{G}}}_w)$ and $\text{Newton}({\hat{\mathfrak{G}}}_{w'})$ are schubitopes, then so is $\text{Newton}({\hat{\mathfrak{G}}}_{w''})$.
\end{cor}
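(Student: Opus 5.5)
We will deduce the statement from Theorem~\ref{thm:layering-high-to-low}. That theorem gives the factorization
\[\mathfrak{G}_{w''}=\mathfrak{G}_w(x_{k+1},\ldots,x_{k+n})\,\mathfrak{G}_{w'}(x_1,\ldots,x_k)\prod_{i=1}^{k}x_i^n .\]
The first two factors involve disjoint sets of variables, so the top-degree component of the product is the product of the top-degree components. Hence
\[\hat{\mathfrak{G}}_{w''}=\hat{\mathfrak{G}}_w(x_{k+1},\ldots,x_{k+n})\,\hat{\mathfrak{G}}_{w'}(x_1,\ldots,x_k)\prod_{i=1}^k x_i^n .\]
In particular, $\Newton(\hat{\mathfrak{G}}_{w''})$ is the Cartesian product $\Newton(\hat{\mathfrak{G}}_{w'})\times\Newton(\hat{\mathfrak{G}}_w)$ in the coordinates $(1,\ldots,k)$ and $(k+1,\ldots,k+n)$, translated by $n(e_1+\cdots+e_k)$.

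By hypothesis there are diagrams $E'\subset[k]\times[m']$ and $E\subset[n]\times[m]$ with $\Newton(\hat{\mathfrak{G}}_{w'})=S_{E'}$ and $\Newton(\hat{\mathfrak{G}}_w)=S_E$. We build one diagram $E''\subset[k+n]\times[n+m'+m]$ in three blocks of columns:
\begin{itemize}
\item $n$ columns each equal to $\{1,\ldots,k\}$;
\item the columns of $E'$, placed in rows $1,\ldots,k$;
\item the columns of $E$, shifted down by $k$ so they occupy rows $k+1,\ldots,k+n$.
\end{itemize}
We then claim $S_{E''}$ is exactly the polytope above. By the characterization $S_D=\mathrm{conv}\{\wt(C)\colon C\le D\}$, it suffices to understand the diagrams $C\le E''$ column by column.

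\begin{itemize}
\item A column equal to $[k]$ is packed, so the only set $\le[k]$ is $[k]$ itself. These $n$ columns contribute the fixed vector $n(e_1+\cdots+e_k)$.
\item The columns of $E'$ lie in rows $\le k$, and any set $\le$ such a column also lies in rows $\le k$. So these columns range over exactly the choices $C'\le E'$.
\item The columns of $E$ shifted down by $k$ are the main issue. A set $R\le S+k$ may use rows $\le k$, so the choices $C\le E''$ restricted to these columns are \emph{not} just shifted copies of $C\le E$. Simply stacking the diagrams therefore does not give a product.
\end{itemize}

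The obstacle is the third block, and there are two possible ways around it.

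The first idea is to check directly that $\mathrm{conv}\{\wt(C)\colon C\le E''\}$ still equals the product polytope. This seems false in general, since weight can move from the lower block into the rows $1,\ldots,k$.

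The better option is to cite, or prove quickly, the fact that $\chi_D$ factors for block diagrams. In this paper's conventions, though, $C\le D$ only allows squares to move upward, so that fact does not directly help with this block either.

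We therefore take a different route, using the dual-character description. Suppose $\hat{\mathfrak{G}}_w$ and $\hat{\mathfrak{G}}_{w'}$ are given as scalar multiples of $\chi_E$ and $\chi_{E'}$, as in Theorems~\ref{thm:TopVexChar} and~\ref{thm:topdeg-almostvex}. Then it suffices to show that the product polytope is the schubitope of $E''$, with the $E$-block shifted down and the $n$ full columns placed above it. The reason is as follows. In the rows of the $E$-block, a square can only rise into rows $1,\ldots,k$. But in any $C\le E''$, every column of the first block already occupies all of rows $1,\ldots,k$.

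The decisive inequality should come from the schubitope's defining inequalities in the Monical--Tokcan--Yong form. For a row set $I$, the count $\theta_{E''}(I)$ should decompose as $\theta_{E'}(I\cap[k])+\theta_E(I\setminus[k])+n|I\cap[k]|$. For columns lying in rows $\ge k+1$, the greedy pairing that defines $\theta$ never uses rows in $[k]$ in a way that raises the bound beyond the full-column contribution. We would verify this decomposition of $\theta$ carefully; it is the key step. Once it holds, $S_{E''}$ equals the translated product, and $\Newton(\hat{\mathfrak{G}}_{w''})$ is a schubitope.
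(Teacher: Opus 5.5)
Your reduction is correct and matches the paper's intent; the paper's whole proof is ``follows directly from Theorem~\ref{thm:layering-high-to-low}.'' Top-degree parts multiply across disjoint variable sets, so $\Newton(\hat{\mathfrak G}_{w''})=n(e_1+\cdots+e_k)+S_{E'}\times S_E$. The gap is in realizing this polytope as a single schubitope.

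The diagram $E''$ you actually define, with the $E$-columns separate from the $n$ columns $[k]$, is wrong. Take $w=21$, $w'=1$, so $w''=321$. Your $E''$ has columns $\{1\},\{1\},\{2\}$, so $S_{E''}$ is the segment from $(2,1,0)$ to $(3,0,0)$, whereas $\hat{\mathfrak G}_{321}=x_1^2x_2$. Neither of your rescues fixes this:
\begin{itemize}
\item Since $C\le E''$ is a column-by-column condition, full columns elsewhere do not stop an $E$-column from rising into rows $1,\ldots,k$.
\item Since $S_D$ coincides with the Monical--Tokcan--Yong polytope, switching to $\theta$-inequalities cannot change the answer. For $I=\{1\}$, every nonempty $E$-column contributes a matched pair (empty square in row $1$, box below it) to $\theta_{E''}(I)$. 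So your proposed decomposition of $\theta_{E''}$, the ``key step'' you left unverified, is false for this $E''$.
\end{itemize}
Separately, assuming $\hat{\mathfrak G}_w$ and $\hat{\mathfrak G}_{w'}$ are multiples of dual characters is not part of the hypothesis, and you do not need it.

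The missing idea is to put rows $1,\ldots,k$ into the same columns as $E$. This is exactly the shape of $D(w'')$, whose first $n$ columns are $[k]\cup(D(w)_j+k)$. Let $E''$ consist of:
\begin{itemize}
\item the columns of $E'$;
\item the columns $[k]\cup(E_c+k)$ for the nonempty columns $E_c$ of $E$;
\item enough copies of $[k]$ to make $n$ columns of the second kind.
\end{itemize}
The mechanism is that $R\le[k]\cup(S+k)$ forces the $i$-th smallest element of $R$ to equal $i$ for $i\le k$. Hence $R=[k]\cup(R'+k)$ with $R'\le S$. It follows that $\{\wt(C)\colon C\le E''\}$ is exactly $n(e_1+\cdots+e_k)$ plus the product of $\{\wt(C')\colon C'\le E'\}$ and $\{\wt(C)\colon C\le E\}$, and taking convex hulls finishes. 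If ``the $n$ full columns placed above it'' was meant this way, this is the justification you need, rather than the claim about other columns.

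You must also check that $E$ has at most $n$ nonempty columns, or the translation vector comes out wrong. The number of nonempty columns of $E$ equals $\max_{\alpha\in S_E}\alpha_1$. Every monomial of $\mathfrak G_w$ with $w\in S_n$ has $x_1$-degree at most $n-1$, because row $1$ of a marked bumpless pipe dream has at most $n-1$ blank tiles and no tiles of $U(P)$.
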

\begin{proof}
    Follows directly from Theorem~\ref{thm:layering-high-to-low}.
\end{proof}

As an application of Corollaries \ref{cor:layering-SBD} and \ref{cor:high-to-low-SBD}, we consider chains of fireworks and vexillary permutations.  For any $w \in S_n$, there exist $i_0=1<i_1<\cdots<i_{k+1}=n+1$ such that for every $j\in [k]$, $w(i_{j})\ldots w(i_{j+1}-1)$ is either a vexillary or fireworks permutation of some set of consecutive integers.  For every $j\in [k]$, let $\text{min}_j(w):=\text{min}\{w(i_{j}),\ldots,w(i_{j+1}-1)\}$.  We say $w\in S_n$ is a \textbf{dominant fireworks-vexillary chain} if there exists such a choice of $i_0=1<i_1<\cdots<i_{k+1}=n$ for which $\text{min}_1(w),\ldots,\text{min}_k(w)$ is $132$-avoiding.  
\begin{example}
    The permutation $w=769821534$ is a dominant fireworks-vexillary chain with $i_1=1$, $i_2=5$, and $i_3=7$.  Specifically, $7698$ is fireworks, $21$ is vexillary (and fireworks), and $534$ is also vexillary.  Observe that $\text{min}_1(w)=6$, $\text{min}_2(w)=1$, and $\text{min}_3(w)=3$; the sequence $613$ is $132$-avoiding.
\end{example}
\newtheorem*{thm:chains}{Theorem~\ref{thm:chains}}
\begin{thm:chains}
    If $w\in S_n$ is a dominant fireworks-vexillary chain, then there exists a subset $A\subset D(w)$ such that $\supp(\mathfrak G_w) = \{\wt(\mathcal D)\colon \mathcal D \in \SBD(D(w), \emptyset,A)\}$.  In particular, $\widetilde{\mathfrak{G}}_w$ has M-convex support, and $\text{Newton}(\hat{\mathfrak{G}}_w)$ is a schubitope.
\end{thm:chains}
\begin{proof}
    Observe that if $w'$ is a vexillary (resp. fireworks) permutation of $i_j,\ldots,i_{j+1}$, then $1,\ldots,i_j-1,w'(i_j),\ldots,w'(i_{j+1})$ is a vexillary (resp. fireworks) permutation of $1,\ldots, i_{j+1}$.  The result then follows from Theorems \ref{thm:supp-Gw-SBD} and \ref{thm:fireMconv} and Corollaries \ref{cor:layering-SBD} and \ref{cor:high-to-low-SBD}.
\end{proof}

\section*{Acknowledgements} 
The author would like to thank Sara Billey, Jack (Chen-An) Chou, Linus Setiabrata, and Tianyi Yu for helpful and inspiring conversations.

\bibliographystyle{amsplain}
\bibliography{refs}

\end{document}